\theoremstyle{plain} 
\newtheorem{theorem}{\indent\sc Theorem}[section]
\newtheorem{lemma}[theorem]{\indent\sc Lemma}
\newtheorem{corollary}[theorem]{\indent\sc Corollary}
\newtheorem{proposition}[theorem]{\indent\sc Proposition}
\theoremstyle{definition} 
\newtheorem{definition}[theorem]{\indent\sc Definition}
\newtheorem{remark}[theorem]{\indent\sc Remark}
\newtheorem{example}[theorem]{\indent\sc Example}
\begin{document}

\title[A large family of projectively equivalent $C^0$-Finsler manifolds]{A large family of projectively equivalent $C^0$-Finsler manifolds} 

\author[Ryuichi Fukuoka]{Ryuichi Fukuoka} 

\subjclass[2010]{ 
Primary 53C60; Secondary 53B40, 51F99.
}

%
\keywords{$C^0$-Finsler manifold, minimizing paths, geodesics, non-smoothness, uniqueness}


\address{Department of Mathematics \endgraf 
State University of Maring\'a \endgraf
87020-900, Maring\'a, PR \endgraf
Brazil}
\email{rfukuoka@uem.br}



\maketitle

\begin{abstract}
A $C^0$-Finsler structure on a differentiable manifold is a continuous real valued function defined on its tangent bundle such that its restriction to each tangent space is a norm. 
In this work we present a large family of projectively  equivalent $C^0$-Finsler manifolds $(\hat M,\hat F)$, where $\hat M$ is diffeomorphic to the Euclidean plane. The structures $\hat F$ don't have partial derivatives and they aren't invariant by any transformation group of $\hat M$.
For every $p,q \in (\hat M,\hat F)$, we determine the unique minimizing path connecting $p$ and $q$.
They are line segments parallel to the vectors $(\sqrt{3}/2,1/2)$, $(0,1)$ or $(-\sqrt{3}/2,1/2)$, or else a concatenation of two of these line segments.
Moreover $(\hat M,\hat F)$ aren't Busemann $G$-spaces and they don't admit any bounded open $\hat F$-strongly convex subsets.
Other geodesic properties of $(\hat M,\hat F)$ are also studied.
\end{abstract}

\section{Introduction}
\label{introducao}

Let $M$ be a differentiable manifold, $T_xM$ be its tangent space at $x \in M$ and $TM=\{(x,y); x\in M, y \in T_xM\}$ be its tangent bundle.
$TM$ without the zero section is the {\em slit tangent bundle} and it will be denoted by $TM\backslash \{0\}$.
If $\phi=(x^1, \ldots, x^n)$ is a coordinate system on an open subset $U$ of $M$, then
\[
\left( \phi^{-1}(x^1, \ldots, x^n), \sum_{i=1}^n y^i\frac{\partial}{\partial x^i} \right) \mapsto (x^1, \ldots, x^n, y^1, \ldots, y^n)
\]
is the {\em natural coordinate system} on $TU$ with respect to $\phi$.
We use the superscript notation for coordinate functions as it is usual in Finsler geometry.

A Finsler structure on $M$ is a function $F: TM \rightarrow \mathbb R$ which is smooth when restricted to the slit tangent bundle and its restriction to each tangent space is a Minkowski norm (See \cite{BaoChernShen}).
There is another definition of Finsler structure on a differentiable manifold:
It is a continuous function $F:TM \rightarrow \mathbb R$ such that its restriction to each tangent space is a norm (See \cite{Burago}). 
In order to make distinction between these two objects, we call the latter by {\em $C^0$-Finsler structure}.

Riemannian geometry has been extremely successful in order to study geometry through  differential calculus.
Locally, Riemannian manifolds have some similarities with Euclidean spaces (for instance the existence of strongly convex geodesic balls) and it also provides geometrical invariants that tell us when two Riemannian manifolds can't be locally isometric (for instance, the curvature tensor).
Riemannian metrics also interact very well with certain topological objects. 
The Gauss-Bonnet theorem, the Hadamard theorem, the Bonnet-Myers theorem and the sphere theorem provide classical examples of this feature. 
All these results use strongly the possibility to differentiate the metric tensor.

Finsler geometry is a very relevant subject of differential geometry nowadays and it has the differential calculus as one of its main tools as well.
Its development has followed the footsteps of Riemannian geometry in many aspects and there are several similarities between these two theories (see \cite{BaoChernShen}).
But there are some differences as well.
For instance, Finsler manifolds which aren't Riemannian don't admit a canonical connection and a canonical volume form (see \cite{BaoChernShen}, \cite{Duran-volume}).

$C^0$-Finsler geometry is much less developed than Finsler geometry because differential calculus can't be applied directly on $C^0$-Finsler structures.
Moreover it doesn't have a model geometry such as Riemannian geometry to follow.
There are several differences between geodesics in Finsler manifolds and in $C^0$-Finsler manifolds.
For instance, for Finsler manifolds, we have existence and uniqueness of a geodesic with a given initial position and velocity.
In addition every geodesic is smooth.
This property doesn't hold for the family of $C^0$-Finsler manifolds that we introduce in this work. 
The lack of differentiability and the lack of strong convexity of $\hat F$ explain these differences.
Another example where we have a non-standard behavior of geodesics is the plane endowed with the maximum norm, which can be identified naturally with a $C^0$-Finsler manifold.
In this case, every point $p \in \hat M$ that doesn't lie in the lines $x^2=x^1$ or $x^2=-x^1$ has infinitely many minimizing paths connecting it to $(0,0)$ and several of them aren't differentiable (compare with Proposition \ref{mfo}).

$C^0$-Finsler structures appears naturally when we study intrinsic invariant metrics on homogeneous spaces.
Pioneering work in this direction are Berestovskii's papers \cite{Berestovskii1} and \cite{Berestovskii2}.
Let $M$ be a locally compact and locally contractible homogeneous space endowed with an invariant intrinsic metric $d_M$.
Berestovskii proved that $(M,d_M)$ is isometric to a left coset manifold $G/H$ of a Lie group $G$ by a compact subgroup $H < G$ endowed with a $G$-invariant $C^0$-Carnot-Carath\'{e}odory-Finsler metric.
He also proved that if every orbit of one-parameter subgroups of $G$ (under the natural action $\varphi: G \times G/H \rightarrow G/H$) is rectifiable, then $d_M$ is $C^0$-Finsler (see \cite{Berestovskii2}).
Here the $C^0$-Carnot-Carath\'{e}odory-Finsler metric comes from a completely nonholonomic $G$-invariant distribution $\mathcal D$  endowed with a $G$-invariant norm.  
The metric $d_M$ is defined analogously as in the Carnot-Carath\'{e}odory metric of sub-Riemannian geometry (see \cite{Montgomery}). 

Projectively flat metrics have been studied since the nineteenth century.
Beltrami proved in 1855 that if the geodesics of a Riemannian metric defined on an open subset of $\mathbb R^2$ are straight lines, then it has constant Gaussian curvature (See \cite{Beltrami}).
Hilbert Fourth Problem proposes the study of more general and not necessarily symmetric metrics on convex subsets of $\mathbb R^2$ such that the shortest paths are straight lines. 
Several solutions was given for this problem and they can be found in \cite{Busemann-Hilbert-4} and \cite{Pogorelov-Hilbert-4}. 
The history of projectively flat Finsler structures comes from the beginning of the twentieth century and it is a very active subject nowadays.
For instance an account about it as well as the study of the constant flag curvature case can be found in \cite{Bryant-projectively}, \cite{Li-advances},  \cite{Shen-transactions}.

Two Finsler structures on a differentiable manifold are projectively equivalent if they have the same geodesics as point sets.
In \cite{Levi-Civita}, Levi-Civita studied the local problem of projectively equivalent Riemannian manifolds.
Given a Riemannian manifold $(M,g)$, the family of Riemannian metrics which are projectively equivalent to $g$ can be represented by a solution of a linear differential equation (see \cite{Kiosak-Matveev} for a more modern mathematical language).
In particular, they can be represented as a finite dimensional vector space. 
The global study of projectively equivalent Riemannian manifolds is related to integrable systems and frequently involves topological properties of manifolds (see for instance \cite{Matveev-torus}, \cite{Matveev-same-geodesics}).
For Finsler manifolds, a family of projectively equivalent Finsler structures doesn't have, in general, finite dimension.
For instance every vector space endowed with a Minkowski norm is projectively equivalent to the Euclidean space.

For $C^0$-Finsler manifolds, we don't have any systematic tool in order to study geodesics and projective equivalence, although in some particular cases it is possible do calculate geodesics (see \cite{Gribanova} and Section \ref{Secao-Gribanova}).


In this work we introduce a large family of projectively equivalent $C^0$-Finsler structures $\hat F$ in $\hat M \cong\mathbb R^2$.
$\hat F$ are of Berwald type, that is, all tangent spaces of $\hat M$ seen as normed spaces are pairwise isometric.
The $C^0$-Finsler structure is given by
\[
\hat F(x^1, x^2, y^1, y^2) = f(x^1, x^2) F_0(y^1,y^2),
\]
where $F_0$ is the norm on the $(y^1,y^2)$ plane where the unit sphere is the regular hexagon with vertices 
\[
\pm (0,1), \pm (\sqrt{3}/2,1/2) \text{ and } \pm (-\sqrt{3}/2,1/2)
\]
and $f$ is a positive continuous function.
For the sake of simplicity, we denote the $C^0$-Finsler structure
\[
(x^1, x^2, y^1, y^2) \mapsto F_0(y^1, y^2)
\]
on $\hat M$ also by $F_0$.
As in the Poincar\'e half-plane model of hyperbolic plane, $f(x^1, x^2)$ has the tendence to be smaller for larger $x^2$ (see (\ref{limite inferior deformacao f})).
For every $p,q \in (\hat M,\hat F)$, we calculate the unique minimizing path connecting them.
They are line segments parallel to the vectors $(\sqrt{3}/2,1/2)$, $(0,1)$ or $(-\sqrt{3}/2,1/2)$, or else a concatenation of two of these line segments.
Therefore for ``almost every pair of points'' $p$ and $q$, the minimizing path connecting them isn't differentiable.

These $C^0$-Finsler structures don't admit (a priori) partial derivatives and they aren't invariant by any group of transformations of $\hat M$.
Therefore we can't calculate geodesics explicitly as solutions of ODE's or else using the Pontryagin's maximum principle (see Section \ref{Secao-Gribanova}).
In order to calculate the minimizing paths of $(\hat M,\hat F)$, we compare them with minimizing paths of $(\hat M,F_0)$ (see Proposition \ref{mfo}) and use the fact that ``higher'' paths are shorter than ``lower'' paths.
Most of the proofs comes from metric geometry.

The contributions of the present work to the study of minimizing paths and projective equivalence in $C^0$-Finsler geometry are the following: 
\begin{enumerate}
\item There are $C^0$-Finsler manifolds where the minimizing paths can be calculated explicitly through comparison with a model space;
\item There are $C^0$-Finsler structures that are geodesically stable with respect to a large family of metric defomations, that is, the deformations are projectively equivalent to the original manifold. 
In contrast to the Finsler case, these deformations don't need to satisfy any differential equation (See Example \ref{exemplos c0 finsler}).
\end{enumerate} 

Concatenations of line segments parallel to the vectors $(\sqrt{3}/2,1/2)$, $(-\sqrt{3}/2,1/2)$ or $(0,1)$ are very important in this work and they are called {\em preferred paths}.
For technical reasons, the trivial line segment (a point) will be also considered parallel to these vectors and they can be part of a preferred path.
We also use the term ``preferred'' for half-lines, lines and vectors which are parallel to these directions.

This work is organized as follows:
In Section \ref{define familia} we define the family of $C^0$-Finsler structures that we study in this work.
In addition we fix the ``clock'' notation for line segments in $\mathbb R^2$ in order to make the proofs easier to follow.
In Section \ref{Secao-Gribanova} we present a Gribanova's work that is related to this work.
In Section \ref{preliminares} we give definitions and theorems that are necessary for this work.
In Section \ref{preferred paths} we present several length comparison results between preferred paths.
In Section \ref{s-minimizing} we calculate explicitly the minimizing path {\em among preferred paths} that connects two arbitrary points.
In Section \ref{smooth and preferred}, we prove that if a piecewise smooth curve $\gamma$ connecting $p, q \in \hat M$ has a non-preferred tangent vector, then there exist a preferred path connecting $p$ and $q$ which is strictly shorter than $\gamma$.
This result determines all minimizing paths and geodesics in $(\hat M, \hat F)$ (See Theorems \ref{principal} and \ref{principal 2}).
In Section \ref{geodesic structure}, we prove that the manifolds $(\hat M, \hat F)$ don't admit any bounded open $\hat F$-strongly convex subsets. 
Moreover $(\hat M, \hat F)$ aren't Busemann $G$-spaces. 
We also make comments about other geodesic properties of $(\hat M,\hat F)$ and propose some problems.

The author would like to thank Hugo Murilo Rodrigues for his valuable suggestions.

\section{The family of $C^0$-Finsler structures on $\mathbb R^2$}
\label{define familia}

In what follows, $\hat M = \mathbb R^2$ is the differentiable manifold endowed with its canonical coordinate system $(x^1, x^2)$.
The tangent bundle
$T\hat M$ is endowed with its natural coordinates $(x^1, x^2, y^1, y^2)$. 

The vector $\vec v_i$ in $\mathbb R^2$, with $i \in \{ 0, \ldots, 11 \}$, denotes the Euclidean unit vector which has the same direction of the hour hand of a clock when it is $i$-o'clock.
For instance, $\vec v_0=(0,1)$ and $\vec v_3=(1,0)$.
We use this notation because it is more intuitive and simpler in this work than the traditional angle notation.

We denote the Euclidean oriented closed line segment in $\mathbb R^2$ connecting $p_1$ and $p_2$ by $[p_1,p_2]$.
The notation
$[p_1, p_2, \ldots, p_n]$ is used for the concatenation of the Euclidean line segments $[p_1, p_2]$, $[p_2, p_3], \ldots, [p_{n-1}, p_n]$. 
Whenever applicable, we use the clock convention for concatenation of line segments: 
When we state that $[p_1, p_2, \ldots, p_n]$ has directions $\left< \alpha_1, \ldots, \alpha_{n-1}\right>$, with $\alpha_1, \ldots, \alpha_{n-1} \in \{0, \ldots, 11\}$, it means that the vector $\overrightarrow{p_ip_{i+1}}$ is a positive multiple of $\vec v_i$. 

The half-line beginning from a point $p$ with the same direction of $\vec v_i$ is denoted by $h_i(p)$ and the line containing this half-line is denoted by $l_i(p)$.
If the point $p$ isn't important in $h_i(p)$ and $l_i(p)$ (for instance, when we are interested if some object is orthogonal to them), we replace $h_i(p)$ and $l_i(p)$ respectively by $h_i$ and $l_i$.
The Euclidean line containing two distinct points $p, q \in \hat M$ is denoted by $l[p,q]$.
The Euclidean half-line beginning at $a$ and containing $p \neq a$ is denoted by $h[a,p]$.

Now we are going towards the definition of the family of $C^0$-Finsler structures $\hat F$ on $\hat M$.
$\hat F: T\hat M \rightarrow \mathbb R$ is defined by 
\begin{equation}
\label{define F}
\hat F(x^1, x^2, y^1, y^2) = f(x^1, x^2) F_0 (x^1, x^2, y^1, y^2) 
\end{equation}
where $f: \hat M \rightarrow \mathbb R$ is a continuous positive function such that:
\begin{itemize}
\item \label{item 1 f} There exist a $\hat \theta \in (\pi/3, \pi/2]$ such that if $p \neq q$ and the Euclidean angle between $\overrightarrow{pq}$ and $\vec v_0$ is in the interval $[-\hat \theta, \hat \theta]$, then
\begin{equation}
\label{limite inferior deformacao f}
f(q)< f(p);
\end{equation}
\item \begin{equation}
\label{limite superior deformacao f}
f(x) \in (1, L_{\hat \theta}) \text{ for every }x \in \hat M,
\end{equation}
where $L_{\hat \theta} > 1$ depends only on $\hat \theta$ and will be explained after Remark \ref{semi plano e local}.
\end{itemize}

Denote by $A(p)$ the set of points $q \in \hat M\backslash \{p\}$ such that the angle between $\vec v_0$ and $\overrightarrow{pq}$ is in the interval $[-\hat \theta, \hat \theta]$. 
$\vec v_{\hat \theta}$ is the unit Euclidean vector such that the angle between $\vec v_0$ and $\vec v_{\hat \theta}$ (in this order) is $\hat \theta$. 
The vector $\vec v_{-\hat \theta}$ is defined analogously.
Although this notation doesn't follow exactly the ``clock pattern'' presented before, this notation will not cause confusion because there isn't any integer in the interval $(\pi/3,\pi/2]$.  
The subscript $\pm \hat\theta$ is also used for Euclidean line segments, lines and half-lines. 

If $f$ is differentiable, then (\ref{limite inferior deformacao f}) implies that the angle between $\vec v_6$ and $\nabla f$ is in the interval $[-(\pi/2 - \hat \theta), \pi/2 - \hat \theta]$ whenever $\nabla f$ different from zero.

\begin{remark}
\label{semi plano e local}
In this work we prove all the results for $\hat M = \mathbb R^2$.
But afterwards it will become clear that we can replace $\hat M$ by ``$\hat F$-convex'' open subsets $U$ of $\hat M$, which includes some arbitrarily small neighborhoods of a point (see Proposition \ref{local}).
\end{remark}

Now we define $L_{\hat \theta}$.
Consider the Euclidean trapezoid $T = [a, b, c, d, a] \subset \hat M$ with directions $\left< 8, 6, 2, 10 \right>$ such that $h_{-\hat \theta}(b)$ contains $d$.
It is easy to see that this trapezoid exist and that trapezoids with these properties are pairwise Euclidean homothetic  ({\em here and in several parts of this work, an Euclidean drawing is very helpful and enough to figure out the situation}).
We want to make sure that 
\begin{equation}
\label{desigualdade fundamental}
\ell_{\hat F} ([c,d]) < \ell_{\hat F} ([d,a,b,c]),
\end{equation}
where $\ell_{\hat F}$ is the length with respect to the $C^0$-Finsler structure $\hat F$.
Of course this inequality holds if $\hat F$ is replaced by $F_0$.
We claim that if we define
\begin{equation}
\label{l theta}
L_{\hat \theta} = \frac{\ell_E ([d,a,b,c])}{\ell_E([c,d])} > 1,
\end{equation}
where $\ell_E$ is the Euclidean length, then (\ref{desigualdade fundamental}) holds.
In fact, first of all observe that
\[
\frac{\ell_{F_0}([a,b])}{\ell_{E}([a,b])} = \frac{\ell_{F_0}([b,c])}{\ell_{E}([b,c])} 
= \frac{\ell_{F_0}([c,d])}{\ell_{E}([c,d])} = 
\frac{\ell_{F_0}([d,a])}{\ell_{E}([d,a])} = 1
\]
due to definition of $[a,b]$, $[b,c]$, $[c,d]$, $[d,a]$ and $F_0$.
Therefore
\begin{equation}
\label{reference trapezoid}
\ell_{\hat F} ([c,d])< L_{\hat \theta} . \ell_E ([c,d]) =\ell_E([d,a,b,c]) < \ell_{\hat F} ([d,a,b,c]).
\end{equation}

It is straightforward from the definition of $L_{\hat \theta}$ that $L_{\hat \theta} \in (1, 2)$.

Now we present a large family of $C^0$-Finsler manifolds that satisfy (\ref{limite inferior deformacao f}) and (\ref{limite superior deformacao f}).
It will be proved afterwards that they are projectively equivalent (see Theorem \ref{principal 2}).

\begin{example}
\label{exemplos c0 finsler} 
Let $\hat \theta \in (\pi/3, \pi/2)$ and $L_{\hat \theta}$ given by (\ref{l theta}). 
Let $f_1: \mathbb R \rightarrow \mathbb R$ be a Lipschitz function with Lispchitz constant $L \in (0, \arctan (\pi/2 - \hat \theta))$.
Let  $f_2: \mathbb R \rightarrow \left( 1, L_{\hat \theta}\right)$ be a continuous strictly decreasing function.
Then it is straightforward that
\[
f(x^1,x^2) = f_2(x^2+f_1(x^1))
\]
satisfies (\ref{limite superior deformacao f}).
We claim that $f$ satisfies (\ref{limite inferior deformacao f}).
In fact, suppose that $q=(q^1, q^2)$ and $p=(p^1, p^2)$ are  such that $q\in A(p)$.
Then
\[
f_1(p^1)-f_1(q^1) \leq L\vert q^1 - p^1 \vert < q^2 - p^2,
\]
where the second inequality holds because $q \in A(p)$.
Therefore 
\[
f_1(p^1) + p^2 < f_1(q^1) + q^2
\]
and 
\[
f(p) > f(q)
\]
whenever $q \in A(p)$.
\end{example}

We will work with three $C^0$-Finsler structures on $\hat M$: $\hat F$, $F_0$ and the Euclidean norm $E$.
Whenever necessary, we will make these structures explicit. 
For instance (as already defined) the arclength with respect to $\hat F$, $F_0$ and $E$ is denoted respectively by $\ell_{\hat F}$, $\ell_{F_0}$ and $\ell_E$.
{\em If there isn't any mention to the $C^0$-Finsler structure used on $\hat M$, it will be assumed implicitly that the objects are measured with respect to $\hat F$.} 
Any reference to angles is with respect to $E$.

\section{Gribanova's work}

\label{Secao-Gribanova}

In \cite{Gribanova}, Gribanova studied left invariant $C^0$-Finsler metrics $F$ on 
\[
\mathbb R^2_+ = \{(x^1,x^2)\in \mathbb R^2;x^2 > 0\}
\]
endowed with the group of transformations generated by homotheties centered at the origin and horizontal translations.
The left invariance of $F$ implies that 
\[
F(x^1,x^2,y^1,y^2) = \frac{F(0,1,y^1,y^2)}{x^2}
\]
(as it happens in hyperbolic plane), 
$F$ is continuously differentiable with respect to $(x^1,x^2)$ and Pontryagin's maximum principle can be used in order to get a ``geodesic type equation'' for this problem. 
This equation gives a necessary condition in order to a path $\gamma:[a,b] \rightarrow \mathbb R^2_+$ be minimizing. 
Gribanova calculated all the minimizing paths of these examples in Theorem 1 of \cite{Gribanova}.
For the particular $C^0$-Finsler structure
\[
F(x^1, x^2, y^1, y^2) = \frac{F_0(y^1, y^2)}{x^2},
\]
which is related to the present work, the minimizing paths are given by 
\[
\{(x^1, x^2) \in \mathbb R^2_+;F_0(x^1 - p^1 ,x^2) = R\},
\] 
where $R>0$ and $p^1 \in \mathbb R$.
In particular, every preferred path $[a,b,c,d,e]$ with directions $\left< 0, 2, 4, 6\right>$ is a geodesic.
Therefore there exist geodesics that aren't minimizing paths.
This is a slight oversight in Theorem 1 of \cite{Gribanova}, because she states that every geodesic is a minimizing path.

\section{Preliminaries}
\label{preliminares}

In this section we present notations, definitions and results that are necessary for this work. 
A reference for $C^0$-Finsler manifolds is \cite{Burago}.
The definition of Busemann $G$-space can be found in \cite{Busemann-Geodesic}. 

If $X$ is a topological space and $U$ is a subset of $X$, then $\text{int} U$ is the interior of $U$, $\bar U$ is the closure of $U$ and $\partial U$ is the boundary of $U$.

Let $(M, F)$ be a $C^0$-Finsler manifold. 
We denote the length of a piecewise smooth curve $\gamma:[t_0,s_0]\rightarrow (M,  F)$ by
\begin{equation}
\label{comprimento finsler}
\ell_F(\gamma):=\int_{t_0}^{s_0} F(\gamma(t),\gamma^\prime(t))dt.
\end{equation}
The metric (distance function) $d_F: M \times M \rightarrow \mathbb R$ on $(M,F)$ is given by
\[
d_F(p,q)=\inf_{\gamma \in \mathcal S_{p,q}} \ell_F (\gamma),
\]
where $\mathcal S_{p,q}$ is the family of piecewise smooth paths on $M$ that connects $p$ and $q$. 
It is straightforward that $(M, F)$ has the same topology of the differentiable manifold $M$.

Given a metric space $(X,d)$ and a path $\gamma:[t_0,s_0] \rightarrow X$, the length of $\gamma$ is defined as
\[
\ell_d(\gamma):=\sup_{\mathcal P}\sum_{i=1}^{n_{\mathcal P}} d(\gamma(\tau_i),\gamma(\tau_{i-1})),
\]
where $\mathcal P=\{t_0 = \tau_0 < \tau_1 < \ldots < \tau_{n_{\mathcal P}}=s_0\}$ is a partition of $[t_0,s_0]$. The supremum is taken over all partitions of $[t_0,s_0]$.

A natural question is whether 
\[
\ell_{F} (\gamma) = \ell_{d_F} (\gamma)
\]
holds for every piecewise smooth path $\gamma:[t_0,s_0] \rightarrow M$. 
The answer is affirmative (see Section 2.4.2 of \cite{Burago}).

\begin{remark}
\label{equivalent metrics}
It is well known that the metrics $d_E$ and $d_{F_0}$ are equivalent.
The equivalence between $d_{F_0}$ and $d_{\hat F}$ follows from (\ref{limite superior deformacao f}).
Therefore there exist a constant $C>0$ such that
\[
\ell_E \leq \ell_{F_0} < \ell_F \leq C \ell_E.
\]
\end{remark}

A path $\gamma:I\subset \mathbb R \rightarrow X$ is a geodesic if it is a locally minimizing path, that is for every $t_0\in I$, there exist a neighborhood $J$ of $t_0$ such that $\gamma\vert_{[t_1,t_2]}$ is a minimizing path for every $t_1\leq t_2 \in J$.

\begin{remark}
\label{caminhos e caminhos parametrizados} 
Two parameterized paths $\gamma_1$ and $\gamma_2$ will be identified if they  differ by a monotonic reparameterization and we will use the relationship $\gamma_1 = \gamma_2$ for the sake of simplicity.
This convention allow us to identify parameterized paths $\gamma$ with its image if $\gamma$ is injective. 
Whenever there exist possibility of misunderstandings with these identifications, we will provide further explanations to make the situation clearer.
\end{remark}

\begin{definition}
\label{Busemann definition}[Busemann $G$-space]
A Busemann $G$-space is a metric space $(X,d_X)$ satisfying the following conditions:
\begin{enumerate}
\item A bounded subset of $(X,d_X)$ with infinite points has an accumulation point;
\item If $a, b\in X$ are two distinct points, there exist $c \in X\backslash \{a,b\}$ such that $d_X(a,c) + d_X(c,b) = d_X(a,b)$;
\item For every $p \in X$, there exist a $\rho > 0$ such that if $a,b \in X$ are distinct points satisfying $d_X(p,a) < \rho$ and $d_X(p,b)< \rho$, then there exist $c \in X\backslash \{a,b\}$ such that $d_X(a,b)+d_X(b,c) = d_X(a,c)$;
\item If $a,b\in X$ are distinct points and $c_1, c_2 \in X\backslash \{a,b\}$ are such that 
\begin{itemize}
\item $d_X(a,b)+d_X(b,c_i)=d_X(a,c_i)$, for $i=1,2$;
\item $d_X(b,c_1) = d_X(b,c_2)$, 
\end{itemize}
then $c_1 = c_2$.
\end{enumerate}
\end{definition}

For instance, complete Riemannian manifolds are Busemann $G$-spaces. In fact, Items (1) and (2) are due to the Hopf-Rinow theorem, Item (3) is due to the existence of strongly convex geodesic balls and Item (4) is consequence of the fact that geodesics aren't minimizing beyond their cut points (See \cite{doCarmo2}).

Now we define some geometrical objects in $\hat M$.

We denote by $S_k(p)$, $k \in \{1, 3, 5, 7, 9, 11\}$ the open sector of $\hat M$ with angle $\pi/3$ bounded by the preferred half-lines $h_{k-1}(p)$ and $h_{k+1}(p)$.
Of course we identify $h_0(p)$ with $h_{12}(p)$.
Whenever the point $p\in \hat M$ isn't relevant, we write $S_k$ instead of $S_k(p)$.

Let $[p_1,\ldots, p_n]$ be a preferred path. 
The points $p_1, \ldots, p_n$ are the {\em vertices} of $[p_1,\ldots, p_n]$.
$p_{i-1}$ is the {\em predecessor}, $p_{i+1}$ is the {\em successor} of $p_i$ and we denote them respectively by $p(p_i)$ and $s(p_i)$.
If $p_1 = p_n$, then $p_2$ is the successor of $p_1=p_n$ and $p_{n-1}$ is its predecessor.
A vertex $p_i$ is called effective if one of the following items holds:
\begin{enumerate}
\item it doesn't have a predecessor and a successor;
\item it only has a successor and $s(p_i)\neq p_i$; 
\item it only has a predecessor and $p_i\neq p(p_i)$;
\item it has a successor and a predecessor, $s(p_i) \neq p_i$,  $p_i \neq p(p_i)$ and the angle between $s(p_i)-p_i$ and $p_i - p(p_i)$ is different from $0$.
\end{enumerate} 
A path $[p_1, \ldots, p_n]$ is called {\em simplified} if it has only effective vertices.
{\em The set of preferred paths will be denoted by $\mathbf P$.
The set of simplified preferred paths will be denoted by $\mathbf S$.} 
A simplified preferred path can be represented as $[p_1, \ldots, p_n]$ with directions $\left< \alpha_1, \ldots, \alpha_{n-1} \right>$, where $\alpha_i \in \{0, 2, 4, 6, 8, 10\}$ for every $i\in \{1, \ldots, n-1\}$ and $\alpha_i \neq \alpha_{i+1}$ for every $i \in \{1, \ldots, n-2\}$.

\begin{remark}
\label{simplied path}
We can get a simplified path, equivalent to a preferred path $[p_1, \ldots,$ $ p_n]$, in the following way: 
Drop all the vertices $p_i$ such that $p_i=p(p_i)$. 
After that, drop all the vertices that doesn't satisfy Item (4) above.
Then we get a simplified path $\gamma$ with the same trace and the same length (with respect to any metric) as $[p_1, \ldots, p_n]$.
We call $\gamma$ a simplification of $[p_1, \ldots, p_n]$.
\end{remark}

Let $p,q \in \hat M$ with $p\neq q$. 
Now we define $[p,q]_{\text{min}}$ which will be proved afterwards that it is the unique minimizing path connecting $p$ and $q$.
If $[p,q] \in \mathbf S$, then define $[x,y]_{\text{min}}=[x, y]$.
Otherwise there exist a unique angular sector $S_k(p)$ containing $q$.
Suppose that the upper boundary of $S_k(p)$ is $h_i(p)$ and its lower boundary is $h_j(p)$.
For instance, $S_7(p)$ has $h_8(p)$ as its upper boundary and $h_6(p)$ as its lower boundary. Denote $a=l_i(p) \cap l_j(q)$.
Then $[p,q]_{\text{min}}$ is defined as $[p,a,q]$.
It's not difficult to see that $[p,q]_{\text{min}} = [q,p]_{\text{min}}$ as subsets of $\hat M$.
Analysing all the cases we have that $[p,a,q]$ can have directions $\left< 0, 2\right>$, $\left< 2, 4\right>$ or else $\left< 4, 6\right>$ (or their reverse paths).
All these cases have concavity downwards.

Remark \ref{explica minimizante intuitivamente} explain intuitively why $[p,q]_{\text{min}}$ are the minimizing paths connecting two points in $(\hat M,\hat F)$ comparing them with the corresponding minimizing paths in $(\hat M, F_0)$.

\begin{proposition}[Minimizing paths of $(\hat M, F_0)$]
\label{mfo}
Let $p,q \in (\hat M,F_0)$. 
We have that
\begin{enumerate}
\item Euclidean line segments in $(\hat M,F_0)$ are minimizing paths; 
\item If $[p,q] \in \mathbf P$, then $[p,q]$ is the only piecewise smooth minimizing path  connecting $p$ and $q$;
\item If $q \in S_3(p)$, then the minimizing paths are monotonic reparameterization of paths $\gamma(t) = (t,x^2(t))$, where $x^2(t)$ are a Lipschitz map with Lipschitz constant equal to $\sqrt{3}/3$;
\item If $q \in S_k(p)$, $k\in \{1, 5, 7, 9, 11\}$, then the minimizing paths are the paths described in Item $(3)$ rotated by an angle $(3-k)\pi /6$, $k\in \mathbb Z$.
\end{enumerate} 

\end{proposition}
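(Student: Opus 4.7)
The plan is to exploit the fact that $F_0$ is a norm on $\mathbb R^2$, making $(\hat M, F_0)$ a normed vector space with $d_{F_0}(p,q) = F_0(q-p)$, and to analyze the equality case in the basic length estimate
\[
\ell_{F_0}(\gamma) \;=\; \int_a^b F_0(\gamma'(t))\, dt \;\geq\; F_0\!\left(\int_a^b \gamma'(t)\, dt\right) \;=\; F_0(\gamma(b)-\gamma(a)),
\]
which is Jensen's inequality applied to the convex, positively homogeneous function $F_0$. Item (1) is then immediate, since the segment $t\mapsto p+t(q-p)$ has constant tangent $q-p$ and realizes equality. For the converse direction I would pass to the dual norm $F_0^*$, whose unit ball is a regular hexagon rotated by $\pi/6$, and use that for any $\xi$ with $F_0^*(\xi)\leq 1$ satisfying $\xi(q-p)=F_0(q-p)$, the estimate $\ell_{F_0}(\gamma)\geq \int\xi(\gamma'(t))\,dt = \xi(q-p)$ combined with the reverse inequality forces $\xi(\gamma'(t))=F_0(\gamma'(t))$ almost everywhere. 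Because $F_0$ is polyhedral, this last condition says exactly that $\gamma'(t)$ lies in the closed cone $\bar S_k$ over the edge of the unit ball of $F_0$ that $\xi$ supports.

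For Item (2), when $q-p$ is parallel to a vertex direction $\vec v_j$ (even $j$), the covectors realizing $F_0(q-p)$ fill the entire edge of the dual hexagon joining the two supporting covectors of the adjacent edges $\bar S_{j-1}$ and $\bar S_{j+1}$. Applying the equality condition with both endpoints confines $\gamma'(t)$ to $\bar S_{j-1}\cap\bar S_{j+1} = \{\lambda\vec v_j:\lambda\geq 0\}$ a.e., so $\gamma$ is a monotonic reparameterization of $[p,q]$. For Item (3), if $q\in S_3(p)$ lies in the open sector, the supporting covector at $q-p$ is the single vertex $(2/\sqrt{3},0)$ of the dual hexagon, and a direct check gives $\bar S_3=\{(a,b):a\geq 0,\ |b|\leq a/\sqrt{3}\}$ with $F_0$ linear on $\bar S_3$ equal to $2a/\sqrt{3}$. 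Hence equality is equivalent to $\gamma'(t)\in\bar S_3$ a.e.; after discarding stationary intervals and reparameterizing so that the first coordinate equals $t$, this is exactly the statement that $\gamma$ is the graph of a Lipschitz function of constant $\sqrt{3}/3$. Conversely, any such graph has length $2(q^1-p^1)/\sqrt{3}=F_0(q-p)$ and so is minimizing.

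Item (4) reduces to Item (3) by symmetry: $F_0$ is invariant under the $\pi/3$-rotation of $\hat M$, which, iterated in the clock convention, carries any odd sector $S_k$ onto $S_3$ via the rotation of angle $(3-k)\pi/6$. The main technical point I expect to require care with is the equality-case analysis for this non-strictly-convex norm; in particular, justifying the passage in Item (3) from the pointwise condition $\gamma'(t)\in\bar S_3$ to a graph representation requires removing the intervals where $\gamma'$ vanishes, verifying that the first coordinate then becomes strictly increasing on each remaining piece, and gluing the pieces into a globally defined Lipschitz graph. All of this is routine for piecewise smooth $\gamma$ but needs to be stated carefully.
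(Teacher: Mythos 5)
Your proposal is correct. It is a systematic convex--duality (calibration) argument: minimality comes from integrating a supporting covector $\xi$ with $F_0^*(\xi)\le 1$ and $\xi(q-p)=F_0(q-p)$ along the path, and the equality case $F_0(\gamma'(t))=\xi(\gamma'(t))$ a.e.\ confines $\gamma'(t)$ to the cone over the face of the $F_0$-unit ball exposed by $\xi$. This is the same underlying mechanism the paper uses for Item (3) --- there the functional $(y^1,y^2)\mapsto \tfrac{2}{\sqrt 3}y^1$ appears implicitly through the computation $d_{F_0}(x,z)\ge \tfrac{2}{\sqrt 3}(z^1-x^1)$ and the length is handled via partition sums of the metric rather than via the integrand --- but your treatment of Items (1) and (2) is genuinely different. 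The paper proves Item (1) by citation and Item (2) by the Euclidean comparison $\ell_{F_0}([p,q])=\ell_E([p,q])<\ell_E(\gamma)\le \ell_{F_0}(\gamma)$, exploiting that the hexagon is inscribed in the unit circle so that $F_0\ge E$ with equality exactly in the preferred directions; you instead apply the equality-case analysis with the two extreme supporting covectors at a vertex of the hexagon, obtaining $\gamma'(t)\in \bar S_{j-1}\cap\bar S_{j+1}=\mathbb R_{\ge 0}\vec v_j$ a.e. Both are valid; the paper's Item (2) argument is shorter, while yours derives all four items uniformly from one duality principle and makes transparent why uniqueness holds precisely in the vertex directions and fails in the open sectors. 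The technical point you flag --- passing from $\gamma'(t)\in\bar S_3$ a.e.\ to the Lipschitz-graph representation by discarding stationary intervals and reparameterizing by the (nondecreasing) first coordinate --- is real but routine for piecewise smooth curves, and the paper glosses over the corresponding step as well.
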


\begin{proof}

These facts are well known but we give their proofs for the sake of completeness. 

\

Item 1:

It is proved in Proposition 3.4 of \cite{BenettiFukuoka}.

\

Item 2:

Let $\gamma:[0,1] \rightarrow \mathbb (\hat M,F_0)$ be a path connecting $p$ and $q$ such that $\gamma([0,1]) \not \subset [p,q]$. 
Then 
\[
\ell_{F_0} ([p,q]) = \ell_E ([p,q]) < \ell_E (\gamma) \leq \ell_{F_0}(\gamma),
\]
where the last inequality holds due to $\ell_E \leq \ell_{F_0}$.

\

Item 3:

Observe that $\sqrt{3}/3$ and $-\sqrt{3}/3$ are the slopes of $h_2(p)$ and $h_4(p)$ respectively.

If $(z^1, z^2) \in \bar S_3((x^1,x^2))$, then
\[
d_{F_0}((x^1,x^2),(z^1,z^2)) 
= \frac{2}{\sqrt 3} \left( z^1 - x^1 \right)
\]
and if $z \not\in \bar S_3(x)$, then
\[
d_{F_0}((x^1,x^2),(z^1,z^2)) 
> \frac{2}{\sqrt 3} \left( z^1 - x^1 \right).
\]
In particular, 
\[
d_{F_0}(p,q)=\frac{2}{\sqrt 3}\left( q_1 - p_1\right).
\]

The length of a path $\eta = (\eta_1, \eta_2)$ connecting $p$ and $q$ is given by
\[
\ell_{F_0} (\eta) 
= \sup_{\mathcal P}\sum_{i=1}^{n_{\mathcal P}} d_{F_0} (\eta(t_i), \eta(t_{i+1}))
\geq \sup_{\mathcal P}\sum_{i=1}^{n_{\mathcal P}} \frac{2}{\sqrt 3}\left( \eta^1(t_{i+1}) - \eta^1(t_i)\right),
\]
and the equality holds iff $\eta (t) \in \bar S_3(\eta(s))$ for every $t>s$.
In the equality case we have that
\[
\ell_{F_0}(\eta) = \frac{2}{\sqrt 3} (q^1 - p^1) = d_{F_0} (p,q).
\]
But the condition $\eta(t) \in \bar S_3(\eta(s))$ for every $t>s$ is equivalent to the condition that $\eta$ can be reparameterized as $\gamma(t) = (t,x^2(t))$, where $x^2(t)$ is a Lipschitz map with Lipschitz constant $\sqrt 3/3$, what settles this item. 

\

Item (4)

Just observe that rotations centered at $p$ by an integer multiple of $\pi/3$ are isometries of $(\hat M, F_0)$ and they interchanges the subsets $S_k(p)$, $k\in \{1, 3, 5, 7, 9,$ $ 11\}$.
\end{proof}

\begin{remark}
\label{explica minimizante intuitivamente}
Items (3) and (4) of Proposition \ref{mfo} states that, in general, several paths that connects $p$ and $q$ are minimizing in $(\hat M,F_0)$.
When we introduce a pointwise homothety $f(x^1,x^2)$ on $F_0$ in such a way that higher paths of $(\hat M,\hat F)$ have the tendence to be shorter than the lower paths, it is a kind of tie break between these paths.
{\em The minimizing path connecting $p$ and $q$ in $(\hat M, \hat F)$ is $[p,q]_{\text{min}}$, which is the highest among all of these paths.}

In order to keep this comparison under control, (\ref{limite inferior deformacao f}) provides a type of lower bound for this pointwise deformation and (\ref{limite superior deformacao f}) provides a type of upper bound.
\end{remark} 

\section{Length comparisons between preferred paths}
\label{preferred paths}

In this section we give several length comparison results between preferred paths in $(\hat M,\hat F)$.

\begin{lemma}
\label{compara euclideano com F}
Let $\gamma_1$ and $\gamma_2$ be preferred paths such that $L_{\hat \theta}.\ell_E (\gamma_1) \leq \ell_E (\gamma_2)$.
Then $\ell_{\hat F} (\gamma_1) < \ell_{\hat F} (\gamma_2)$.
\end{lemma}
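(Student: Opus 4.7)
The proof is essentially an exercise in chaining inequalities, once one notices that preferred paths are ``calibrated'' for $F_0$ in the sense that their $F_0$-length coincides with their Euclidean length.

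First I would record the key identity
\[
\ell_{F_0}(\gamma) = \ell_{E}(\gamma) \quad \text{for every } \gamma \in \mathbf{P}.
\]
This holds because the preferred unit vectors $\vec v_0, \vec v_2, \vec v_4, \vec v_6, \vec v_8, \vec v_{10}$ are precisely the vertices of the regular hexagon that serves as the unit sphere of $F_0$, so $F_0(\vec v_i)=1=\|\vec v_i\|_E$ for every even $i$. Integrating over any concatenation of segments in these directions gives the stated identity.

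Next I would turn the pointwise bound (\ref{limite superior deformacao f}) into an integrated bound. Since $f$ is continuous and each $\gamma_i$ has compact trace, $f$ attains its maximum $M_i$ and minimum $m_i$ on that trace, and by (\ref{limite superior deformacao f}) we have $1<m_i\leq M_i<L_{\hat\theta}$. Consequently, whenever $\ell_{F_0}(\gamma)>0$,
\[
\ell_{F_0}(\gamma) \;<\; \ell_{\hat F}(\gamma) \;=\; \int f(\gamma(t))\,F_0(\gamma(t),\gamma'(t))\,dt \;<\; L_{\hat\theta}\,\ell_{F_0}(\gamma).
\]

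Finally I would just chain everything. Assuming $\ell_E(\gamma_1)>0$ (the degenerate case $\gamma_1$ a point being trivial, since then $\ell_{\hat F}(\gamma_1)=0<\ell_{\hat F}(\gamma_2)$ because $\ell_E(\gamma_2)\geq L_{\hat\theta}\cdot 0=0$ and in the nontrivial subcase $\ell_E(\gamma_2)>0$), combine the two displays above with the hypothesis $L_{\hat\theta}\,\ell_E(\gamma_1)\leq\ell_E(\gamma_2)$:
\[
\ell_{\hat F}(\gamma_1) \;<\; L_{\hat\theta}\,\ell_{F_0}(\gamma_1) \;=\; L_{\hat\theta}\,\ell_E(\gamma_1) \;\leq\; \ell_E(\gamma_2) \;=\; \ell_{F_0}(\gamma_2) \;<\; \ell_{\hat F}(\gamma_2).
\]

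There is no real obstacle; the only point requiring a little care is the strictness of the inequalities $\ell_{F_0}(\gamma)<\ell_{\hat F}(\gamma)<L_{\hat\theta}\ell_{F_0}(\gamma)$, which is guaranteed by the fact that $f$ is continuous and takes values in the \emph{open} interval $(1,L_{\hat\theta})$, so its extrema on a compact set of positive $F_0$-measure are strictly separated from the endpoints.
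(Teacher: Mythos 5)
Your proof is correct and follows exactly the same chain of inequalities as the paper's (which simply writes $\ell_{\hat F}(\gamma_1) < L_{\hat\theta}\,\ell_{F_0}(\gamma_1) = L_{\hat\theta}\,\ell_E(\gamma_1) \leq \ell_E(\gamma_2) = \ell_{F_0}(\gamma_2) < \ell_{\hat F}(\gamma_2)$); your added justifications of the calibration identity $\ell_{F_0}=\ell_E$ on preferred paths and of the strictness of the $f$-bounds are exactly the points the paper leaves implicit.
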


\begin{proof}

The proof is just straightforward calculation.

\[
\ell_{\hat F} (\gamma_1)  < L_{\hat \theta} \ell_{F_0} (\gamma_1) = L_{\hat \theta} \ell_{E} (\gamma_1) \leq \ell_E(\gamma_2) = \ell_{F_0}(\gamma_2) < \ell_{\hat F} (\gamma_2)
\]
due to the definition of $\hat F$ and the fact that $\gamma_1$ and $\gamma_2$ are preferred paths.

\end{proof}

\

Now we are going to prove that if $p, q \in \hat M$, then 
\[
\ell_{\hat F}([p,q]_{\text{min}}) \leq \ell_{\hat F}(\gamma)
\]
for every $\gamma \in \mathbf S$ connecting $p$ and $q$ which is placed below $[p,q]_{\text{min}}$.
We also prove that the equality holds iff $\gamma = [p,q]_{\text{min}}$ (see Propositions \ref{abaixo eh maior 1}, \ref{dos lados eh maior} and \ref{abaixo eh maior}).
We give some preliminary results before.

\begin{lemma}
\label{compara segmentos 0} Let $[a,b]$ and $[c,d]$ are line segments such that
\begin{enumerate}
\item $\ell_{F_0} ([a, b]) = \ell_{F_0} ([c, d])$;
\item $(1-t)c + t d \in A((1-t) a + t b)$ for every $t \in (0,1)$.
\end{enumerate} 
Then
\[
\ell_{\hat F} ([c, d]) < \ell_{\hat F} ([a, b]).
\]
\end{lemma}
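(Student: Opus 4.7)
The plan is to parameterize both segments affinely on $[0,1]$ and then express $\ell_{\hat F}$ as an integral involving $f$ and the constant direction vector of each segment, so that the hypothesis reduces to a pointwise comparison of $f$.

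More concretely, I would set $\gamma_1(t) = (1-t)a + tb$ and $\gamma_2(t) = (1-t)c + td$ for $t \in [0,1]$, so that $\gamma_i'(t)$ is constant. By definition of $\hat F$ in (\ref{define F}) and the length formula (\ref{comprimento finsler}),
\[
\ell_{\hat F}([a,b]) = F_0(b-a)\int_0^1 f(\gamma_1(t))\,dt, \qquad \ell_{\hat F}([c,d]) = F_0(d-c)\int_0^1 f(\gamma_2(t))\,dt.
\]
Hypothesis (1) gives $F_0(b-a) = \ell_{F_0}([a,b]) = \ell_{F_0}([c,d]) = F_0(d-c)$, so it suffices to show the integral for $\gamma_2$ is strictly smaller than the one for $\gamma_1$.

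Hypothesis (2) says $\gamma_2(t) \in A(\gamma_1(t))$ for every $t \in (0,1)$, and so by (\ref{limite inferior deformacao f}) we have the pointwise strict inequality $f(\gamma_2(t)) < f(\gamma_1(t))$ for all such $t$. Since $f$ is continuous, this strict inequality integrates to the strict inequality
\[
\int_0^1 f(\gamma_2(t))\,dt < \int_0^1 f(\gamma_1(t))\,dt,
\]
which, multiplied by the common factor $F_0(b-a) = F_0(d-c) > 0$, yields $\ell_{\hat F}([c,d]) < \ell_{\hat F}([a,b])$.

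There is essentially no obstacle here; the only minor point to be careful about is upgrading the pointwise strict inequality to a strict inequality of integrals, which is standard for continuous integrands (the difference $f\circ\gamma_1 - f\circ\gamma_2$ is a nonnegative continuous function that is strictly positive on $(0,1)$, hence has positive integral). The lemma is then proved.
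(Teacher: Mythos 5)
Your proof is correct and follows essentially the same route as the paper's: parameterize both segments affinely, factor out the equal constants $F_0(b-a)=F_0(d-c)$, and use hypothesis (2) together with (\ref{limite inferior deformacao f}) to get the pointwise strict inequality $f(\gamma_2(t))<f(\gamma_1(t))$ on $(0,1)$, which integrates to the strict inequality of lengths. Your extra remark about why the strict pointwise inequality survives integration is a fine (and correct) touch of care that the paper leaves implicit.
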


\begin{proof}

Consider parameterizations $\gamma_a : [0,1] \rightarrow \hat M$ and $\gamma_c : [0,1] \rightarrow \hat M$ given respectively by $\gamma_a(t) = (1-t) a + t b$ and $\gamma_c (t) = (1-t) c + t d$.
Then
\[
\ell_{\hat F} ([c, d]) 
= \int_0^1 \hat F(\gamma_c(t), \gamma^\prime_c(t)).dt 
= \int_0^1 f(\gamma_c(t)) F_0 (d - c).dt
\]
\[
< \int_0^1 f(\gamma_a(t)) F_0 (b - a).dt = \ell_{\hat F} ([a, b]),
\]
where the inequality holds because $\gamma_c(t) \in A(\gamma_a(t))$ for every $t \in (0,1)$.

\end{proof}

\

Lemma \ref{compara segmentos 0} can be applied in several situations:

\begin{lemma}
\label{lados paralelos}
Suppose that $a,b,c,d \in \hat M$ are such that $\overrightarrow{ab} = \overrightarrow{cd}$ and $c \in A(a)$.
Then $\ell_{\hat F} ([c,d]) < \ell_{\hat F} ([a,b])$. 
\end{lemma}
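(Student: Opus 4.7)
The plan is to reduce this to Lemma \ref{compara segmentos 0} by checking its two hypotheses for the segments $[a,b]$ and $[c,d]$. The first hypothesis, $\ell_{F_0}([a,b])=\ell_{F_0}([c,d])$, is immediate: since $\overrightarrow{ab}=\overrightarrow{cd}$ we have $b-a=d-c$, hence $F_0(b-a)=F_0(d-c)$ and the $F_0$-lengths of the two segments agree.

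The main (and essentially only) computation is verifying hypothesis (2), namely that $(1-t)c+td\in A((1-t)a+tb)$ for every $t\in(0,1)$. Set $p(t)=(1-t)a+tb$ and $q(t)=(1-t)c+td$. The key observation is that
\[
q(t)-p(t)=(1-t)(c-a)+t(d-b)=c-a,
\]
using $d-b=c-a$, which follows directly from $\overrightarrow{ab}=\overrightarrow{cd}$. Thus the displacement vector $\overrightarrow{p(t)q(t)}$ equals $\overrightarrow{ac}$ for every $t$. Since by hypothesis $c\in A(a)$, the angle between $\vec v_0$ and $\overrightarrow{ac}$ lies in $[-\hat\theta,\hat\theta]$, and $c\neq a$; therefore $\overrightarrow{p(t)q(t)}$ has the same property and is nonzero, so $q(t)\in A(p(t))$ for all $t\in(0,1)$.

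With both hypotheses of Lemma \ref{compara segmentos 0} verified, the conclusion $\ell_{\hat F}([c,d])<\ell_{\hat F}([a,b])$ follows directly. There is no real obstacle here; the lemma is essentially a clean specialization of the previous one to the translation case, and the only thing worth emphasizing is the rigid parallel-transport identity $q(t)-p(t)\equiv c-a$, which makes the angle condition trivially propagate along the parametrization.
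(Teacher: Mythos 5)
Your proof is correct and matches the paper's intent: the paper simply remarks that the proof is ``analogous to that of Lemma \ref{compara segmentos 0},'' while you make the reduction explicit by checking that the translation identity $q(t)-p(t)\equiv c-a$ yields both hypotheses of that lemma, so the statement is in fact a direct corollary. No gaps.
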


\begin{proof}
The proof is analogous to the proof of Lemma \ref{compara segmentos 0}.
\end{proof}

\begin{lemma}
\label{compara segmentos 2} 
Let $[a, b] \in \mathbf S$ and suppose that $k\in \{0, 2, 4\}$ is such that $l_k(a) \neq l_k(b)$ $($that is, $[a,b]$ isn't parallel to $l_k$$)$.
Let $[c,d] \in \mathbf S$ such that
\begin{enumerate}
\item $c \in l_k(a)$ and $d \in l_k(b)$;
\item $c^2 \leq a^2$ and $d^2 \leq b^2$. 
\end{enumerate}
Then
\[
\ell_{\hat F}([c,d]) \geq \ell_{\hat F}([a,b])
\]
and the equality holds iff $[c,d]=[a,b]$.
\end{lemma}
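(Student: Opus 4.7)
My plan is to adapt the proof of Lemma~\ref{compara segmentos 0} with the roles of the two segments interchanged, since here $[a,b]$ lies ``above'' $[c,d]$ in the sense of the cone $A(\cdot)$, which reverses the direction of the inequality. The key preliminary fact needed to reduce the problem to a direct monotonicity argument on the integrand $f$ is the equality $F_0(d - c) = F_0(b - a)$. I expect this $F_0$-equality to be the main obstacle, because it depends on the hexagonal symmetry of $F_0$ rather than on any property of $f$.

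To establish the $F_0$-equality, write $c = a + s_1 \vec v_k$ and $d = b + s_2 \vec v_k$, so that $d - c = (b - a) + (s_2 - s_1)\vec v_k$. Project onto a Euclidean unit vector $\vec w$ perpendicular to $\vec v_k$: the $\vec v_k$-term vanishes, yielding $(d - c) \cdot \vec w = (b - a) \cdot \vec w$. The hypothesis that $[a,b]$ and $[c,d]$ are simplified preferred paths non-parallel to $l_k$ means their directions $\vec v_m, \vec v_{m'}$ satisfy $m, m' \in \{0, 2, \ldots, 10\} \setminus \{k, k+6\}$. But every such preferred direction makes angle $\pm \pi/3$ or $\pm 2\pi/3$ with $\vec v_k$, so $|\vec v_m \cdot \vec w| = |\vec v_{m'} \cdot \vec w| = \sqrt{3}/2$. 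Combined with the projected identity this forces $\ell_E([a,b]) = \ell_E([c,d])$, and since preferred vectors have their $F_0$-norm equal to their Euclidean norm, we obtain $F_0(d - c) = F_0(b - a)$.

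Next, the hypotheses $c^2 \leq a^2$ and $d^2 \leq b^2$, combined with the sign of the second coordinate of $\vec v_k$, force $s_1, s_2 \leq 0$ for $k \in \{0, 2\}$ and $s_1, s_2 \geq 0$ for $k = 4$. Parameterize $\gamma_a(t) = (1-t)a + tb$ and $\gamma_c(t) = (1-t)c + td$ on $[0,1]$; then $\gamma_a(t) - \gamma_c(t) = -((1-t)s_1 + t s_2)\vec v_k$, a scalar multiple of $\vec v_k$ of constant sign in $t$. Whenever nonzero, it points in direction $\vec v_0$, $\vec v_2$, or $\vec v_{10}$, each making angle in $\{0, \pm \pi/3\} \subset (-\hat\theta, \hat\theta)$ with $\vec v_0$. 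Hence $\gamma_a(t) \in A(\gamma_c(t))$ wherever they differ, and by (\ref{limite inferior deformacao f}), $f(\gamma_a(t)) < f(\gamma_c(t))$ at such $t$. If $[a,b] = [c,d]$ equality in the conclusion holds trivially; otherwise $s_1, s_2$ are not both zero and share the same sign, so $(1-t)s_1 + t s_2 \neq 0$ on a subset of $[0,1]$ of positive measure. Integrating the resulting almost-everywhere strict inequality and multiplying by the common value $F_0(b - a) = F_0(d - c)$ gives the strict inequality $\ell_{\hat F}([a,b]) < \ell_{\hat F}([c,d])$, completing the argument.
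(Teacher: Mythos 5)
Your proof is correct and follows essentially the same route as the paper's: both reduce the claim to the equality of the $F_0$-lengths of the two segments together with the pointwise comparison $f(\gamma_a(t))<f(\gamma_c(t))$ coming from $\gamma_a(t)\in A(\gamma_c(t))$, which is exactly the mechanism of Lemma~\ref{compara segmentos 0} with the roles of the segments interchanged. You are somewhat more explicit than the paper about why $\ell_{F_0}([a,b])=\ell_{F_0}([c,d])$ (via the projection orthogonal to $\vec v_k$) and about the degenerate cases $s_1=0$ or $s_2=0$, but the substance is the same.
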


\begin{proof}
In order to fix ideas, suppose that $k=2$.
The other cases are analogous.
The line segments $[a,b]$ and $[c,d]$ can have directions $\left< 0\right>$, $\left< 4\right>$, $\left< 6\right>$ or else $\left< 10\right>$.
Therefore $\ell_{F_0} ([b_i, b_{i+1}]) = \ell_{F_0}([c_i, c_{i+1}])$.

Suppose that $[c,d]$ is strictly below $[a,b]$. 
Then $(1-t)a + t b \in h_2((1-t)c + t d)$, what implies that $(1-t)a + t b \in A((1-t)c + t d)$ for every $t \in (0,1)$.
Therefore
\[
\ell_{\hat F}([c,d]) > \ell_{\hat F}([a,b])
\]
due to Lemma \ref{compara segmentos 0}.
\end{proof}

\begin{proposition}
\label{abaixo eh maior 1}
Let $[p,q] \in \mathbf S$ be a line segment parallel to $l_2$ or $l_4$.
If $\gamma \in \mathbf S$ connects $p$ and $q$ and its points are placed in the lower closed half-plane bounded by $l[p,q]$, then
\[
\ell_{\hat F}([p,q]) \leq \ell_{\hat F}(\gamma),
\]
and the equality holds iff $\gamma = [p,q]$. 
\end{proposition}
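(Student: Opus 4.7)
The plan is to prove the inequality in one shot via a vertical projection onto $l[p,q]$, bypassing induction on the number of segments. Assume without loss of generality that $[p,q]$ is parallel to $l_2$; the $l_4$ case is analogous after reflecting across $l_0$. Define the map $\pi:\{P:P\text{ below or on }l[p,q]\}\to l[p,q]$ by $\pi(P)=P+h(P)\vec v_0$, where $h(P)\ge 0$ is the unique scalar such that $\pi(P)\in l[p,q]$; geometrically, $\pi$ slides $P$ straight up to $l[p,q]$. A direct calculation gives $\pi(p_i+s\vec v_d)-\pi(p_i)=s\sin(d\pi/6)\cdot(2/\sqrt 3)\vec v_2$, so $\pi$ sends a preferred segment in direction $\vec v_d$ with $d\in\{2,4,8,10\}$ to a direction-$\vec v_2$ or direction-$\vec v_8$ segment of the same Euclidean length on $l[p,q]$, while direction-$\vec v_0$ or $\vec v_6$ segments collapse to a single point. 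In particular, $\pi\circ\gamma$ is a rectifiable path on $l[p,q]$ from $p$ to $q$.

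The first step would be a segment-wise comparison. For $d\in\{2,4,8,10\}$, the lift vector from any point of the segment to its image is the non-negative multiple $h(\cdot)\vec v_0$ of $\vec v_0$, which lies in the wedge $A(\cdot)$; since original and image share the same $F_0$-length, Lemma~\ref{compara segmentos 0} (or Lemma~\ref{lados paralelos} when the directions agree) yields $\ell_{\hat F}(\pi([p_i,p_{i+1}]))\le\ell_{\hat F}([p_i,p_{i+1}])$, with strict inequality unless the whole segment lies on $l[p,q]$. For $d\in\{0,6\}$ the image is a single point, so $0=\ell_{\hat F}(\pi([p_i,p_{i+1}]))<\ell_{\hat F}([p_i,p_{i+1}])$ trivially. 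Summing over all segments produces $\ell_{\hat F}(\pi\circ\gamma)\le\ell_{\hat F}(\gamma)$.

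Next I would parameterize $l[p,q]$ by $u\in\mathbb R$ with $u(p)=0$ and $u(q)=t:=\ell_E([p,q])$. The projected path $\pi\circ\gamma$ corresponds to a continuous $u:[0,L]\to\mathbb R$ with $u(0)=0$, $u(L)=t$, and piecewise-constant derivative in $\{-1,0,1\}$. A change of variables rewrites
\[
\ell_{\hat F}(\pi\circ\gamma)\;=\;\int_0^L f(p+u(s)\vec v_2)\,|u'(s)|\,ds\;=\;\int_{\mathbb R}f(p+v\vec v_2)\,N(v)\,dv,
\]
where $N(v)$ counts preimages of $v$ under $u$. Continuity of $u$ forces $N(v)\ge 1$ for every $v\in[0,t]$, so $\ell_{\hat F}(\pi\circ\gamma)\ge\int_0^t f(p+v\vec v_2)\,dv=\ell_{\hat F}([p,q])$, with equality iff $u$ is monotone from $0$ to $t$. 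Combining the two bounds gives $\ell_{\hat F}([p,q])\le\ell_{\hat F}(\gamma)$; tracing equality forces $\gamma\subset l[p,q]$ with no direction-$\vec v_0$ or $\vec v_6$ segment (from the segment-wise step) and $u$ monotone increasing (from the total-variation step), so no direction-$\vec v_8$ segment either. Since $\gamma\in\mathbf S$ is simplified, this leaves only the direct segment $\gamma=[p,q]$.

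I expect the main subtle point to be the segment-wise lift comparison for the direction-$\vec v_4$ and $\vec v_{10}$ segments, whose images under $\pi$ change direction (to $\vec v_2$ and $\vec v_8$ respectively), so the parallel-sides hypothesis of Lemma~\ref{lados paralelos} is not directly available; one must instead invoke Lemma~\ref{compara segmentos 0}, verifying that the interior lift vector $(h(p_i)+ts)\vec v_0$ (at parameter $t\in(0,1)$ along a segment of length $s$) is a positive multiple of $\vec v_0$ and therefore lies in $A(\cdot)$ at every interior point, which is precisely the hypothesis (\ref{limite inferior deformacao f}) provides.
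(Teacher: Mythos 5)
Your proof is correct, and it reaches the paper's conclusion by a genuinely different technical route. The paper slices $\gamma$ by the vertical lines $l_0(a_i)$ through its vertices; in each resulting closed slab it selects one sub-segment of $\gamma$ crossing from one bounding line to the other and compares it \emph{directly} with the corresponding piece $[b_i,b_{i+1}]$ of $[p,q]$ via Lemma \ref{compara segmentos 2} (itself an instance of the vertical-lift comparison of Lemma \ref{compara segmentos 0}), discarding all remaining pieces of $\gamma$ as surplus nonnegative length; the covering of $[p,q]$ is guaranteed because consecutive projected vertices bound each slab. You instead interpose the projected path $\pi\circ\gamma$ and split the comparison in two: the estimate $\ell_{\hat F}(\pi\circ\gamma)\le\ell_{\hat F}(\gamma)$ is the same vertical-lift argument (and your closing concern is exactly right: for the $\vec v_4$ and $\vec v_{10}$ segments the image changes direction, so Lemma \ref{lados paralelos} is unavailable and one must use Lemma \ref{compara segmentos 0}, which applies because the lift $h(\cdot)\vec v_0$ is a nonnegative multiple of $\vec v_0$, affine along each segment, hence in $A(\cdot)$ at every interior point unless the segment lies on $l[p,q]$), while the covering step is replaced by the weighted Banach-indicatrix identity and $N(v)\ge 1$. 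The trade-off: the paper stays entirely within elementary segment-versus-segment comparisons at the cost of the slab bookkeeping, whereas your version makes the ``the projection must sweep over all of $[p,q]$'' step mechanical at the cost of justifying the change-of-variables formula for the piecewise-linear $u$ (elementary here, but note that $N(v)\ge 1$ should be read for almost every $v\in(0,t)$, since the finitely many values attained on constant pieces have infinite multiplicity and contribute a null set). Your equality analysis --- all segments forced onto $l[p,q]$, then monotonicity of $u$ excluding $\vec v_8$, then simplification forcing a single segment --- is complete and matches the paper's.
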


\begin{proof}

Denote $\gamma = [p=a_1, a_2, \ldots, q=a_n]$.
Consider $I=\{l_0(a_i) \cap [p,q];i\in \{1, \ldots, n\}\}$.
Reorder them as $I=\{p=b_1, b_2, \ldots, b_m=q\}$ in such a way that the points $b_i$ are placed from left to right.
The closed slab $\bar W_i$ bounded by $l_0(b_i)$ and $l_0(b_{i+1})$, $i \in \{1, \ldots, m-1\}$, intercepts $\gamma$.
This intersection can be decomposed in line segments (not necessarily disjoint) with endpoints in $l_0(b_i)$ and $l_0(b_{i+1})$ because the open slab $W_i$ doesn't intercept $\{a_i,i \in \{1, \ldots, n\}\}$.
Let $I_i$ be one of these line segments.
Then we have that
\[
\ell_{\hat F}([b_i,b_{i+1}]) \leq \ell_{\hat F}(I_i)
\]
due to Lemma \ref{compara segmentos 2} and because $I_i$ isn't placed above $[b_i,b_{i+1}]$.
The equality holds iff $\bar W_i \cap \gamma = [b_i, b_{i+1}]$.

The proposition is settled extending the analysis for every $i \in \{1, \ldots,$ $ m-1\}$.
\end{proof}

\begin{proposition}
\label{dos lados eh maior}
Let $[p,q] \in \mathbf S$ be a line segment parallel to $l_0$.
If $\gamma \in \mathbf S$ connects $p$ and $q$, then
\[
\ell_{\hat F}([p,q]) \leq \ell_{\hat F}(\gamma),
\]
and the equality holds iff $\gamma = [p,q]$. 
\end{proposition}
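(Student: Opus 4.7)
The plan is to reduce the proposition to a one-dimensional comparison on $l[p,q]$ via a projection along a preferred direction whose orientation lies inside the cone $A$ of (\ref{limite inferior deformacao f}); this is possible precisely because $\hat\theta>\pi/3$. Set coordinates so that $p^1=0$ and $q^2>p^2$, so $l[p,q]=\{x^1=0\}$. First decompose $\gamma$ at its intersections with $l[p,q]$ (subdividing a crossing segment at the crossing point if necessary) into maximal consecutive sub-paths $\gamma_1,\ldots,\gamma_k$, each contained in one closed half-plane $\{x^1\geq 0\}$ or $\{x^1\leq 0\}$ with both endpoints $u_i,v_i$ on $l[p,q]$, satisfying $u_1=p$, $v_k=q$, and $u_{i+1}=v_i$. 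By additivity of $\ell_{\hat F}$ it suffices to show $\ell_{\hat F}(\gamma_i)\geq \ell_{\hat F}([u_i,v_i])$ for each $i$ and then combine.

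For a right sub-path $\gamma_i\subset\{x^1\geq 0\}$, define the projection $\pi(x^1,x^2):=(0,\,x^2+x^1/\sqrt{3})$. Since $(x^1,x^2)+(2x^1/\sqrt{3})\vec v_{10}=\pi(x^1,x^2)$, this is the projection onto $l[p,q]$ along the preferred direction $\vec v_{10}$; the angle between $\vec v_0$ and $\vec v_{10}$ equals $\pi/3<\hat\theta$, so $\pi(x^1,x^2)\in A((x^1,x^2))$ whenever $x^1>0$, and (\ref{limite inferior deformacao f}) gives $f(\pi(x^1,x^2))<f(x^1,x^2)$ (with equality on $l[p,q]$). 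A direct computation shows that, for a preferred segment $[a,b]\subset\gamma_i$ in direction $\alpha$, the image $\pi([a,b])$ is a vertical segment on $l[p,q]$ of Euclidean length $|b-a|_E$ when $\alpha\in\{0,2,6,8\}$, and collapses to a point when $\alpha\in\{4,10\}$ (these being the directions parallel to $\vec v_{10}$). Using $\ell_{\hat F}([a,b])=|b-a|_E\int_0^1 f((1-t)a+tb)\,dt$, the pointwise inequality $f\geq f\circ\pi$ gives $\ell_{\hat F}([a,b])\geq \ell_{\hat F}(\pi([a,b]))$ in every case. Summing over segments yields $\ell_{\hat F}(\gamma_i)\geq \ell_{\hat F}(\pi(\gamma_i))$. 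A left sub-path $\gamma_i\subset\{x^1\leq 0\}$ is handled with the symmetric projection along $\vec v_2$.

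Writing $\pi(\gamma_i)$ as $(0,y(t))$ and letting $F(y):=\int_0^y f(0,s)\,ds$, the one-dimensional inequality
\[
\int_0^T f(0,y(t))|\dot y(t)|\,dt\;\geq\;\bigl|F(y(T))-F(y(0))\bigr|\;=\;\ell_{\hat F}([u_i,v_i]),
\]
which follows from $|\dot y|\geq\pm\dot y$ and the fundamental theorem of calculus, gives $\ell_{\hat F}(\pi(\gamma_i))\geq \ell_{\hat F}([u_i,v_i])$. Applying the same one-dimensional inequality once more to the piecewise linear concatenation on $l[p,q]$ through $p=u_1,v_1,u_2,\ldots,v_k=q$ yields $\sum_i \ell_{\hat F}([u_i,v_i])\geq \ell_{\hat F}([p,q])$. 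Combining, $\ell_{\hat F}(\gamma)\geq \ell_{\hat F}([p,q])$.

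The main obstacle will be tracing the equality case back through the chain. Equality at the per-segment step forbids any segment with $\alpha\in\{4,10\}$ (otherwise the left-hand side is strictly positive while the right-hand side vanishes) and forces $f\circ\pi=f$ along every $\gamma_i$, hence $\gamma_i\subset l[p,q]$, so that the surviving directions in $\gamma_i$ are only $0$ or $6$; equality at the one-dimensional step forces each $\pi(\gamma_i)$ to be monotone on $l[p,q]$; and equality in the concatenation step forces $u_1,v_1,u_2,\ldots,v_k$ to proceed monotonically from $p$ to $q$. Because $\gamma\in\mathbf S$ has alternating directions at successive vertices, these conditions collapse $\gamma$ to a single vertical segment, namely $\gamma=[p,q]$.
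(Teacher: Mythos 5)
Your proof is correct. It rests on the same geometric mechanism as the paper's: slice the part of $\gamma$ to the right of $l[p,q]$ along the direction of $l_4$ and the part to the left along $l_2$, and use that $\hat\theta>\pi/3$ makes this projection land in $A(\cdot)$, so $f$ does not increase under it while the $F_0$-length of the non-collapsed preferred directions is preserved. The packaging, however, is genuinely different. The paper cuts $[p,q]$ into sub-segments $[b_i,b_{i+1}]$ by the lines $l_4(a_i)$ (resp.\ $l_2(a_i)$) through the vertices of $\gamma$, and compares each with a piece of $\gamma$ in the corresponding slab via Lemma \ref{compara segmentos 2}; you instead write the projection $\pi$ explicitly, compare each segment of $\gamma$ with its image $\pi([a,b])$ (the same comparison as Lemma \ref{compara segmentos 0}, since $\pi(x)\in A(x)$ off $l[p,q]$), and then finish with a one-dimensional argument on $l[p,q]$ using the primitive $F(y)=\int_0^y f(0,s)\,ds$ and telescoping. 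What your version buys: it is self-contained (no appeal to Lemma \ref{compara segmentos 2}), the inequality $|\dot y|\geq\pm\dot y$ automatically absorbs any backtracking of $\gamma$ across the slabs and any non-monotone ordering of the crossing points $u_i,v_i$ (points the paper's ``reorder and decompose the slab intersection into segments'' step treats rather tersely), and the equality case is traced through completely, whereas the paper only asserts it. What the paper's version buys is uniformity: the same slab argument is reused nearly verbatim in Propositions \ref{abaixo eh maior 1} and \ref{abaixo eh maior}, so the lemma-based formulation amortizes better across the section.
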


\begin{proof}

The line $l[p,q]$ is the boundary of two closed half-planes: $\bar H_L$ and $\bar H_R$, which are placed respectively on the left hand side and on the right hand side of $l[p,q]$.

Denote $\gamma = [p=a_1, a_2, \ldots, q=a_n]$.

Suppose that $\gamma$ is placed in $\bar H_R$.
Consider $I=\{l_4(a_i) \cap [p,q];i\in \{1, \ldots, n\}\}$.
Reorder them as $I=\{p=b_1, b_2, \ldots, b_m=q\}$ in such a way that the points $b_i$ are placed from the top to the bottom. 
The closed slab $\bar W_i$ bounded by $l_4(b_i)$ and $l_4(b_{i+1})$ intercepts $\gamma$.
This intersection can be decomposed in line segments (not necessarily disjoint) with endpoints in $l_4(b_i)$ and $l_4(b_{i+1})$ because the open slab $W_i$ doesn't intercept $\{a_i,i \in \{1, \ldots, n\}\}$.
Let $I_i$ be one of these line segments.
Then
\[
\ell_{\hat F}([b_i,b_{i+1}]) \leq \ell_{\hat F}(I_i)
\]
due to Lemma \ref{compara segmentos 2} because $I_i$ is not placed above $[b_i,b_{i+1}]$.
The equality holds iff $\bar W_i \cap \gamma = [b_i,b_{i+1}]$.
The proposition is settled for this case extending the analysis for every $i \in \{1, \ldots, n-1\}$.

If $\gamma$ is placed in $\bar H_L$, then use the direction $\left< 2 \right>$ instead of $\left< 4 \right>$ and the conclusion follows likewise.

If part of $\gamma$ is placed in $\bar H_L$ and the other part is placed in $\bar H_R$, then define 
\[
I = \{ l_4(a_i) \cap [p,q]; a_i \in \bar H_R, i=1, \ldots, n\} 
\]
\[
 \cup \{ l_2(a_i) \cap [p,q]; a_i \in \bar H_L, i=1, \ldots, n\}. 
\]
Reorder them as $I=\{p=b_1, b_2, \ldots, b_m=q\}$ in such a way that the points $b_i$ are placed from the top to the bottom. 
Now $[b_i,b_{i+1}]$ is compared with a line segment $I_i \subset \gamma$ placed in $\bar H_L$ which lies in the  closed slab bounded by $l_2 (b_i)$ and $l_2(b_{i+1})$ or else to a line segment $I_i \subset \gamma$ placed in $\bar H_R$ which lies in the  closed slab bounded by $l_4 (b_i)$ and $l_4(b_{i+1})$. 
The rest of the proof follows as in the former cases.
\end{proof}

The {\em concave side} of $[p,a,q] \in \mathbf S$ is the closed subset of $\hat M$ bounded by $h[a,p] \cup h[a,q]$ and containing $[p,q]$.
Notice that the concave side of $[p,a,q]$ is $h[a,p]$ whenever $h[a,p] = h[a, q]$.

\begin{proposition}
\label{abaixo eh maior}
Let $p, q \in \hat M$ distinct points such that $[p,q] \not \in \mathbf S$.
Suppose that $\gamma \in \mathbf S$ connects $p$ and $q$ and it lies on the concave side of $[p,q]_{\text{\em min}}$.
Then
\[
\ell_{\hat F}([p,q]_{\text{\em min}}) \leq \ell_{\hat F}(\gamma)
\]
and the equality holds iff $\gamma = [p,q]_{\text{\em min}}$.
\end{proposition}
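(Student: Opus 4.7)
My plan is to extend the slab decomposition used in the proofs of Propositions \ref{abaixo eh maior 1} and \ref{dos lados eh maior} from a single preferred segment to the two-segment path $[p,a,q]=[p,q]_{\text{min}}$. The path $[p,a,q]$ has directions $\langle 0,2\rangle$, $\langle 2,4\rangle$, or $\langle 4,6\rangle$ (or their reverses), and in each case I would choose the preferred direction $l_k$ with $k\in\{0,2,4\}$ that is not parallel to either segment of $[p,a,q]$: namely $k=0$ for $\langle 2,4\rangle$, $k=4$ for $\langle 0,2\rangle$, and $k=2$ for $\langle 4,6\rangle$. This choice is exactly what is needed to make Lemma \ref{compara segmentos 2} applicable.

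I will describe case $\langle 2,4\rangle$ (the others being analogous). Write $\gamma=[p=a_1,\ldots,a_n=q]$, and project each vertex $a_i$ and also the apex $a$ vertically onto $[p,a,q]$. Reordering these projection points by $x^1$-coordinate produces a partition $p=b_1,b_2,\ldots,b_m=q$ of $[p,a,q]$. Including $a$ as a partition point guarantees that each $[b_i,b_{i+1}]$ is a single preferred segment, lying entirely in $[p,a]$ or entirely in $[a,q]$. In the vertical slab $\bar W_i$ bounded by $l_0(b_i)$ and $l_0(b_{i+1})$, the intersection $\gamma\cap\bar W_i$ decomposes into finitely many preferred line segments with endpoints on $l_0(b_i)$ and $l_0(b_{i+1})$, because the open slab $W_i$ contains no vertex of $\gamma$. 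Let $I_i$ be any such component.

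The hypothesis that $\gamma$ lies on the concave side of $[p,a,q]$ translates, along any vertical line, into the statement that $\gamma$ sits at $x^2$-coordinate no larger than that of $[p,a,q]$; in particular, the endpoints of $I_i$ have $x^2$-coordinates at most $b_i^2$ and $b_{i+1}^2$. Applying Lemma \ref{compara segmentos 2} with $k=0$ then yields $\ell_{\hat F}([b_i,b_{i+1}])\leq \ell_{\hat F}(I_i)$, with equality iff $I_i=[b_i,b_{i+1}]$. Summing over all slabs that meet $[p,a,q]$, and noting that any portion of $\gamma$ lying in slabs outside $[p^1,q^1]$ only enlarges $\ell_{\hat F}(\gamma)$ further, gives the desired inequality $\ell_{\hat F}([p,a,q])\leq \ell_{\hat F}(\gamma)$. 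For equality to hold one needs: no excursion of $\gamma$ outside $[p^1,q^1]$, exactly one component of $\gamma$ per inside slab, and $I_i=[b_i,b_{i+1}]$ for every $i$, which forces $\gamma=[p,a,q]$.

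The main obstacle is the per-case verification that along the chosen family of $l_k$-lines the concave side really sits at smaller $x^2$-coordinate than $[p,a,q]$. For $k=0$ this is immediate from the coordinate description of the concave side in case $\langle 2,4\rangle$; for the $l_4$-slabs in case $\langle 0,2\rangle$ and the $l_2$-slabs in case $\langle 4,6\rangle$ the projection geometry changes and the inequality $c^2\leq a^2$, $d^2\leq b^2$ of Lemma \ref{compara segmentos 2} has to be re-derived explicitly from the concave side description before the same slab-by-slab comparison can be carried out.
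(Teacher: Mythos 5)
Your proposal is correct and follows essentially the same route as the paper: for each of the three direction patterns of $[p,a,q]$ you project the vertices of $\gamma$ along exactly the preferred direction ($l_0$, $l_2$ or $l_4$, respectively) that the paper chooses, and then run the slab-by-slab comparison of Proposition \ref{abaixo eh maior 1} via Lemma \ref{compara segmentos 2}. The per-case verification you flag at the end (that the concave side sits on the correct side of each $l_k$-line through a point of $[p,a,q]$) is a routine computation, and the paper likewise leaves it implicit by declaring the remaining cases analogous.
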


\begin{proof}

The proof is almost identical to the proof of Proposition \ref{abaixo eh maior 1}.
For instance, if $[p,q]_{\text{min}} = [p,a,q]$ have directions $\left< 4, 6\right>$ and $\gamma = [a_1, \ldots, a_n]$, we consider 
\[
I=\{l_{\mathbf 2}(a_i) \cap [p,q];i\in \{1, \ldots, n\}\}.
\]
If $[p,q]_{\text{min}} = [p,a,q]$ have directions $\left< 2, 4\right>$, we consider 
\[
I=\{l_{\mathbf 0}(a_i) \cap [p,q];i\in \{1, \ldots, n\}\}.
\]
Finally if $[p,q]_{\text{min}} = [p,a,q]$ have directions $\left< 0, 2\right>$, we consider 
\[
I=\{l_{\mathbf 4}(a_i) \cap [p,q];i\in \{1, \ldots, n\}\}.
\]
The rest of the proof is analogous to the proof of Proposition \ref{abaixo eh maior 1}.
\end{proof}

The results above are enough to prove that preferred paths that connects $p$ and $q$ and are ``placed below'' $[p,q]_{\text{min}}$ are larger than $[p,q]_{\text{min}}$.
Most of the rest of this section present results which deal with preferred paths above $[p,q]_{\text{min}}$.

\begin{proposition}[Triangle sides comparison]
\label{triangulo equilatero}
Let $[a,b,c,a]$ be a triangle in $\mathbf S$ with directions $\left< 6,2,10 \right>$ or $\left< 6,10,2\right>$.
Then
\begin{enumerate}
\item $\ell_{\hat F} ([a,c]) < \ell_{\hat F} ([a, b]) < \ell_{\hat F} ([b,c])$;
\item $\ell_{\hat F} ([b,c])<\ell_{\hat F} ([a,c])+\ell_{\hat F}([a,b])$.
\end{enumerate}
\end{proposition}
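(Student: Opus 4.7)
The plan is to exploit the fact that this triangle is Euclidean equilateral with sides along three preferred directions of the hexagon, then reduce Part (1) to pointwise comparisons of $f$ using $(\ref{limite inferior deformacao f})$ and Part (2) to Lemma~\ref{compara euclideano com F}.

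First I verify the equilateral structure. The closure condition $\alpha\vec v_6+\beta\vec v_2+\gamma\vec v_{10}=0$, written in components, forces $\alpha=\beta=\gamma$, so the three sides share a common Euclidean length $L$, and since each side is preferred this is also their common $F_0$-length. After translation I may take $b=(0,0)$, $a=(0,L)$ and $c=(L\sqrt 3/2,L/2)$; the direction sequence $\langle 6,10,2\rangle$ is the mirror image $c\mapsto -c$ and is handled identically.

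For Part (1), I parameterize any two sides meeting at a common vertex by $F_0$-arclength $s\in[0,L]$ starting from that vertex, and compare the two length integrands pointwise. To prove $\ell_{\hat F}([a,b])<\ell_{\hat F}([b,c])$, parameterize $[b,a]$ as $r_1(s)=(0,s)$ and $[b,c]$ as $r_2(s)=(s\sqrt 3/2,s/2)$. Then $r_1(s)-r_2(s)=s\,\vec v_{10}$ for $s>0$, which makes angle $\pi/3<\hat\theta$ with $\vec v_0$, hence $r_1(s)\in A(r_2(s))$. Condition $(\ref{limite inferior deformacao f})$ then gives $f(r_1(s))<f(r_2(s))$, and integrating over $[0,L]$ yields the inequality. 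The inequality $\ell_{\hat F}([a,c])<\ell_{\hat F}([a,b])$ is obtained by the same argument based at the common vertex $a$, where the corresponding difference vector is a positive multiple of $\vec v_2$.

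For Part (2), I apply Lemma~\ref{compara euclideano com F} with $\gamma_1=[b,c]$ and $\gamma_2=[b,a,c]$, the latter a simplified preferred path with directions $\langle 0,4\rangle$. The equilateral structure gives $\ell_E(\gamma_2)=2L=2\ell_E(\gamma_1)$, and since $L_{\hat\theta}\in(1,2)$ the hypothesis $L_{\hat\theta}\ell_E(\gamma_1)\le\ell_E(\gamma_2)$ is satisfied; the lemma then gives $\ell_{\hat F}([b,c])<\ell_{\hat F}([b,a,c])=\ell_{\hat F}([a,b])+\ell_{\hat F}([a,c])$. The only mild subtlety in the proof is keeping the orientations correct so that the difference vectors in Part (1) fall into the cone $A$, which works precisely because the strict inequality $\hat\theta>\pi/3$ accommodates the exact value $\pi/3$ that arises between adjacent preferred directions.
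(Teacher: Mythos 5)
Your proof is correct and follows essentially the same route as the paper: Part (1) is the pointwise comparison of $f$ along matched $F_0$-arclength parameterizations (you unwind directly what the paper packages as Lemma \ref{compara segmentos 2}, itself an instance of Lemma \ref{compara segmentos 0}, and you correctly note that $\hat\theta>\pi/3$ is what puts the difference vectors $\vec v_2$, $\vec v_{10}$ inside the cone $A$), and Part (2) is the identical application of Lemma \ref{compara euclideano com F} using the Euclidean-equilateral structure and $L_{\hat\theta}<2$.
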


\begin{proof}

Observe that preferred triangles in $\mathbf S$ are $E$-equilateral.

In order to prove that $\ell_{\hat F}([a,c]) < \ell_{\hat F}([a,b])$, observe that $l_2(b) = l_2(c)$, $l_2(a) \neq l_2(b)$ and that the line segments $[a,c]$ and $[a,b]$ satisfy the conditions of Lemma \ref{compara segmentos 2}.
Therefore
\[
\ell_{\hat F}([a,c]) < \ell_{\hat F}([a,b]).
\]
The proof of the relationship
\[
\ell_{\hat F}([a,b]) < \ell_{\hat F}([b,c])
\]
is analogous. 

The second item follows from Lemma \ref{compara euclideano com F} and the fact that 
\[
L_{\hat \theta}. \ell_E ([b, c]) < 2\ell_E ([b,c]) = \ell_E ([a,c]) + \ell_E ([a,b]).
\]
\end{proof}

The next result is a particular case of Proposition \ref{abaixo eh maior}.

\begin{corollary}[Parallelogram sides comparison 1]
\label{paralelogramo}
Let $P=[a,b,c,d,a]$ be a parallelogram in $\mathbf S$. 
\begin{enumerate}
\item  If $P$ has directions $\left< 8, 4, 2, 10\right> $, then 
\[
\ell_{\hat F} ([d,a,b]) < \ell_{\hat F} ([b, c, d]);
\]
\item  If $P$ has directions $\left< 6, 4, 0, 10\right>$ or $\left< 6, 8, 0, 2\right>$, then 
\[
\ell_{\hat F} ([a,d,c]) < \ell_{\hat F} ([a,b,c]).
\] 
\end{enumerate}
\end{corollary}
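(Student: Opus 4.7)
My plan is to read both items of the corollary as direct specializations of Proposition~\ref{abaixo eh maior}. In each case one of the two three-vertex subpaths obtained from the parallelogram $P$ will be shown to equal $[p,q]_{\min}$ for a suitable pair $p,q$ of opposite vertices of $P$, while the other three-vertex subpath connects the same endpoints and lies on the concave side of $[p,q]_{\min}$. Proposition~\ref{abaixo eh maior} then supplies the strict inequality immediately, so the only work is to pick the correct $p,q$ and to verify the concave-side inclusion.

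For item~(1), I take $p=b$ and $q=d$. Since $\overrightarrow{ab}$ has direction $8$ and $\overrightarrow{bc}$ has direction $4$, the vector $\overrightarrow{bd}$ points horizontally, so $d\in S_3(b)$; the sector $S_3(b)$ has upper boundary $h_2(b)$ and lower boundary $h_4(b)$, and the very shape of $P$ forces $l_2(b)\cap l_4(d)=\{a\}$. Hence $[b,d]_{\min}=[b,a,d]$, whose reversal is $[d,a,b]$. The path $[b,c,d]$ also connects $b$ and $d$, and the vertex $c$ lies opposite $a$ inside the wedge bounded by $h[a,b]\cup h[a,d]$, which is precisely the concave side of $[b,a,d]$; the whole of $[b,c,d]$ therefore sits in that wedge, and Proposition~\ref{abaixo eh maior} gives $\ell_{\hat F}([d,a,b])<\ell_{\hat F}([b,c,d])$.

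For item~(2), I take $p=a$ and $q=c$. In the $\langle 6,4,0,10 \rangle$ case the vector $\overrightarrow{ac}$ falls in $S_5(a)$, whose upper and lower boundaries are $h_4(a)$ and $h_6(a)$; the construction of $P$ gives $l_4(a)\cap l_6(c)=\{d\}$, so $[a,c]_{\min}=[a,d,c]$. The opposite vertex $b$ lies in the wedge bounded by $h[d,a]\cup h[d,c]$, which is the concave side of $[a,d,c]$, so the whole of $[a,b,c]$ lies there. The $\langle 6,8,0,2 \rangle$ case is the mirror image across $l_0(a)$: now $\overrightarrow{ac}\in S_7(a)$ with upper boundary $h_8(a)$ and lower boundary $h_6(a)$, the intersection $l_8(a)\cap l_6(c)$ is again the parallelogram vertex $d$, and $b$ once more lies on the concave side. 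In both subcases Proposition~\ref{abaixo eh maior} yields $\ell_{\hat F}([a,d,c])<\ell_{\hat F}([a,b,c])$.

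The main obstacle, such as it is, is purely bookkeeping: one must check for each direction pattern that the boundary half-lines used to construct $[p,q]_{\min}$ really cross at the expected parallelogram vertex and that the opposing vertex sits inside the concave wedge. A small Euclidean sketch settles both points instantly in each configuration, which is why the author is justified in calling this corollary a particular case of Proposition~\ref{abaixo eh maior} rather than an independent comparison result.
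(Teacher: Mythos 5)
Your argument is correct and is essentially the paper's: the corollary is presented there as ``a particular case of Proposition \ref{abaixo eh maior}'' with no further proof, and you have supplied exactly the verification that makes this work, namely that one broken side of the parallelogram is $[p,q]_{\text{min}}$ for the appropriate diagonal pair $p,q$ while the other broken side lies in the concave wedge at the intermediate vertex. One small slip in item (1): $\overrightarrow{bd}$ is horizontal only when the parallelogram is a rhombus; in general it is a positive combination of $\vec v_2$ and $\vec v_4$, which still places $d$ in $S_3(b)$, so the conclusion is unaffected.
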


Now we prove that if $Q$ is a preferred trapezoid or a preferred parallelogram, then the length of one of its sides is less than the sum of the lengths of the other sides.
The final result is stated in Theorem \ref{um menor do que tres}. 

\begin{lemma}[Trapezoid sides comparison 1]
\label{trapezio1}
Let $T$ be a trapezoid in $\mathbf S$.
Then the length of one non-parallel side is less than the sum of the lengths of the other sides.
\end{lemma}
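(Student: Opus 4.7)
The plan is to apply Lemma~\ref{compara euclideano com F} after establishing a rigid Euclidean relation among the sides of a preferred trapezoid. Write $T=[a,b,c,d,a]\in\mathbf S$ with $[a,b]\parallel[c,d]$ and non-parallel sides $[b,c],[d,a]$; let $a_i$ denote the Euclidean length of the $i$-th side.

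First I would reduce to the case where the parallel sides have directions $\vec v_0,\vec v_6$. The three possible pairs of parallel-side directions $\{\vec v_0,\vec v_6\}$, $\{\vec v_2,\vec v_8\}$, $\{\vec v_4,\vec v_{10}\}$ are permuted by Euclidean rotations by multiples of $\pi/3$; such rotations preserve $\ell_E$ and send preferred paths to preferred paths, so any Euclidean identity obtained in the vertical case transfers verbatim to the other two.

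Next I would read off the closure condition $\vec{ab}+\vec{bc}+\vec{cd}+\vec{da}=0$. With parallel sides vertical, each non-parallel side has direction in $\{\vec v_2,\vec v_4,\vec v_8,\vec v_{10}\}$, i.e.\ horizontal component $\pm\sqrt{3}/2$ and vertical component $\pm 1/2$ per unit Euclidean length. The horizontal equation forces $a_2=a_4=:n$, and the vertical equation then gives $|a_1-a_3|=n$ unless $[b,c]$ and $[d,a]$ have opposite directions, in which case $T$ is a parallelogram, excluded by hypothesis.

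Setting $p_2=\min(a_1,a_3)$ and $\gamma=[c,d,a,b]\in\mathbf P$, one gets $\ell_E(\gamma)=2p_2+2n$, and since $L_{\hat \theta}\in(1,2)$ by (\ref{l theta}),
\[
L_{\hat \theta}\,\ell_E([b,c])=L_{\hat \theta}\,n<2n\leq 2p_2+2n=\ell_E(\gamma).
\]
Lemma~\ref{compara euclideano com F} applied with $\gamma_1=[b,c]$, $\gamma_2=\gamma$ then delivers
\[
\ell_{\hat F}([b,c])<\ell_{\hat F}([c,d])+\ell_{\hat F}([d,a])+\ell_{\hat F}([a,b]),
\]
which is the assertion. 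The main obstacle is the rigidity $a_2=a_4$, $|a_1-a_3|=a_2$; once in hand, the conclusion is immediate from $L_{\hat \theta}<2$.
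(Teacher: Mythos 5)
Your proof is correct and follows the same route as the paper: both reduce the claim to the Euclidean inequality $L_{\hat\theta}\,\ell_E(\sigma_1)\le\ell_E(\sigma_2)$ and then invoke Lemma~\ref{compara euclideano com F}. The only difference is that you explicitly derive the side-length rigidity $a_2=a_4=n$ and $|a_1-a_3|=n$ from the closure condition, which the paper leaves as an unstated ``straightforward'' Euclidean fact.
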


\begin{proof}

Denote the non-parallel side by $\sigma_1$ and the other sides by $\sigma_2$. 
Lemma \ref{compara euclideano com F} implies that
\[
\ell_{\hat F}(\sigma_1) < \ell_{\hat F}(\sigma_2)
\]
because $L_{\hat \theta}. \ell_E(\sigma_1) \leq \ell_E (\sigma_2)$.\end{proof}

\begin{lemma}[Parallelogram sides comparison 2]
\label{paralelogramo2}
Let $P$ be a parallelogram in $\mathbf S$.
Then the length of one $E$-shorter side $($or any side if $P$ is a diamond$)$ is less than the sum of the lengths of the other sides.
\end{lemma}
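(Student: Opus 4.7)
The plan is to reduce this lemma to a direct application of Lemma~\ref{compara euclideano com F}, exactly as was done for Lemma~\ref{trapezio1}. I would write the parallelogram as $P=[a,b,c,d,a]\in\mathbf{S}$ with parallel pairs $[a,b]\parallel[c,d]$ and $[b,c]\parallel[d,a]$, and let $e_1$ and $e_2$ denote the Euclidean lengths of the two pairs of parallel sides, with $e_1\le e_2$ after a relabeling if necessary. I would then take $\sigma=[a,b]$ as one of the $E$-shorter sides and $\tau=[b,c,d,a]$ as the concatenation of the remaining three sides; both are preferred paths (elements of $\mathbf{P}$), so the hypothesis of Lemma~\ref{compara euclideano com F} is meaningful for the pair $(\sigma,\tau)$.

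The verification then reduces to the Euclidean-length inequality $L_{\hat\theta}\,\ell_E(\sigma)\le\ell_E(\tau)$, i.e.\ $L_{\hat\theta}\,e_1\le e_1+2e_2$, or equivalently $(L_{\hat\theta}-1)e_1\le 2e_2$. Using $L_{\hat\theta}\in(1,2)$, which was recorded right after the definition of $L_{\hat\theta}$, together with $e_1\le e_2$, one immediately obtains $(L_{\hat\theta}-1)e_1<e_1\le e_2<2e_2$. Hence the hypothesis of Lemma~\ref{compara euclideano com F} is fulfilled and that lemma delivers the desired strict inequality $\ell_{\hat F}(\sigma)<\ell_{\hat F}(\tau)$.

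The diamond case $e_1=e_2=:e$ is automatically covered: the same calculation applies starting from any side of $P$, since the sum of the Euclidean lengths of the other three sides is $3e$ and $L_{\hat\theta}\,e<2e<3e$. I do not anticipate any real obstacle in this argument; the only point worth checking is that $\tau$ qualifies as a preferred path in $\mathbf{P}$ (which only requires that the individual sides be preferred segments, not that the concatenation be simplified), and this is immediate from $P\in\mathbf{S}$. The final write-up should be comparable in length to the proof of Lemma~\ref{trapezio1}.
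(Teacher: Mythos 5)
Your proposal is correct and follows exactly the paper's route: the paper's proof simply says it is identical to that of Lemma~\ref{trapezio1}, i.e.\ apply Lemma~\ref{compara euclideano com F} to the $E$-shorter side versus the concatenation of the other three sides, using $L_{\hat\theta}\in(1,2)$ to verify the Euclidean hypothesis $L_{\hat\theta}\,e_1\le e_1+2e_2$. Your write-up just makes that arithmetic (and the preferred-path check for the concatenation) explicit.
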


\begin{proof}
The proof is identical to the proof of Lemma \ref{trapezio1}.
\end{proof}

\begin{lemma}[Trapezoid sides comparison 2]
\label{trapezio2}
Let $T = [a,b,c,d,a]$ be a trapezoid in $\mathbf S$ with directions $\left< 8, 6, 2, 10\right>$ or $\left< 4, 6, 10, 2 \right>$.
Then the length of one side is less than the sum of the lengths of the other sides.
\end{lemma}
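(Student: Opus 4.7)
The plan is to apply Lemma~\ref{compara euclideano com F} to one of the two legs of $T$, namely the side $[b,c]$.

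My first step would be to read off the Euclidean side lengths of $T$ from the closure condition $\overrightarrow{ab}+\overrightarrow{bc}+\overrightarrow{cd}+\overrightarrow{da}=\vec 0$. Writing $\lambda_i$ for the Euclidean length of the $i$-th side and substituting the coordinate expressions of $\vec v_8,\vec v_6,\vec v_2,\vec v_{10}$ (respectively $\vec v_4,\vec v_6,\vec v_{10},\vec v_2$ in the mirror case), the identity $\vec v_8=-\vec v_2$ (respectively $\vec v_4=-\vec v_{10}$) reduces the closure to a simple $2\times 2$ linear system whose unique solution is $\lambda_3=\lambda_1+\lambda_2$ and $\lambda_4=\lambda_2$. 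Thus in both direction patterns the parallel sides are $[a,b]$ and $[c,d]$ with lengths $\lambda_1$ and $\lambda_1+\lambda_2$, while the two legs $[b,c]$ and $[d,a]$ share the common length $\lambda_2$.

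Once the side lengths are in place the comparison is immediate:
\[
\ell_E([b,c])=\lambda_2,\qquad \ell_E([c,d,a,b])=(\lambda_1+\lambda_2)+\lambda_2+\lambda_1=2(\lambda_1+\lambda_2).
\]
Since $L_{\hat\theta}\in(1,2)$ by the remark immediately following (\ref{reference trapezoid}), one has
\[
L_{\hat\theta}\cdot\ell_E([b,c])=L_{\hat\theta}\,\lambda_2<2\lambda_2\le 2(\lambda_1+\lambda_2)=\ell_E([c,d,a,b]).
\]
Both $[b,c]$ and $[c,d,a,b]$ are preferred paths, so Lemma~\ref{compara euclideano com F} applies and yields $\ell_{\hat F}([b,c])<\ell_{\hat F}([c,d,a,b])$, which is exactly the conclusion of the lemma with $[b,c]$ playing the role of the distinguished side.

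I do not anticipate any substantial obstacle: the argument parallels the proofs of Lemma~\ref{trapezio1} and Lemma~\ref{paralelogramo2}, the only new input being the closure computation that pins down the two legs as having the same smaller Euclidean length $\lambda_2$. After that, the bound $L_{\hat\theta}<2$ comfortably absorbs the factor $2$ on the right-hand side, and no case distinction between the two direction patterns is needed because the two configurations are mirror images of each other in the Euclidean sense and the argument only uses $\ell_E$.
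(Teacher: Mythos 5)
There is a genuine gap: your argument only establishes the inequality for the leg $[b,c]$, and that is not the content of this lemma. In the paper's usage (see how Lemma~\ref{trapezio2} is invoked in Lemmas~\ref{paralelogramo3} and~\ref{paralelogramo4}, and how Theorem~\ref{um menor do que tres} is applied in Proposition~\ref{um menor do que tres 2}), ``the length of one side is less than the sum of the lengths of the other sides'' must hold for \emph{each} of the four sides, and the consumer of the lemma gets to choose which side is distinguished. The case you prove -- a non-parallel side -- is already fully covered by Lemma~\ref{trapezio1}, so your proof adds nothing beyond that lemma. The genuinely new and hard case, and the entire reason this lemma exists, is the \emph{longer parallel side} $[c,d]$: with your notation its Euclidean length is $\lambda_1+\lambda_2$ while the other three sides sum to $\lambda_1+2\lambda_2$, so the hypothesis $L_{\hat\theta}\,\ell_E([c,d])\le \ell_E([d,a,b,c])$ of Lemma~\ref{compara euclideano com F} reads $L_{\hat\theta}(\lambda_1+\lambda_2)\le \lambda_1+2\lambda_2$, which \emph{fails} whenever $\lambda_1/\lambda_2$ is large (since $L_{\hat\theta}>1$). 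So the one tool you rely on cannot close the case that matters.

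This is precisely why $L_{\hat\theta}$ was defined via the reference trapezoid with $d\in h_{-\hat\theta}(b)$. The paper's proof splits on the position of $d$ relative to $l_{-\hat\theta}(b)$: when $d$ lies on it, the inequality $\ell_{\hat F}([c,d])<\ell_{\hat F}([d,a,b,c])$ is exactly the defining inequality (\ref{reference trapezoid}); when $d$ lies below, Lemma~\ref{compara euclideano com F} does apply; and when $d$ lies above (your problematic regime $\lambda_1\gg\lambda_2$), one cuts the trapezoid by $l_{-\hat\theta}(b)$ and $l_{-\hat\theta}(d)$, compares the two resulting parallel strips via Lemma~\ref{lados paralelos} (the upper translate is $\hat F$-shorter because of (\ref{limite inferior deformacao f})), and applies the Case~1 ratio to what remains. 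Your closure computation $\lambda_3=\lambda_1+\lambda_2$, $\lambda_4=\lambda_2$ is correct and useful for organizing the cases, but you still need this three-case argument (or an equivalent one using the monotonicity of $f$ along $A(p)$) to handle $[c,d]$; a pure $\ell_E$-versus-$L_{\hat\theta}$ comparison cannot do it.
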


\begin{proof}

We analyse the case $\left< 8, 6, 2, 10\right>$.
The other case is vertically symmetric and its analysis is analogous.
We have only to prove that $\ell_{\hat F} ([c,d]) < \ell_{\hat F} ([d, a, b, c])$.
The inequality $\ell_{\hat F} ([a,b]) < \ell_{\hat F} ([b, c, d, a])$ is direct consequence of Proposition \ref{abaixo eh maior 1}. 
The other two cases are settled in Lemma \ref{trapezio1}.

We split the proof in several cases, depending on the placement of $d$ with respect to $l_{-\hat \theta}(b)$.

\

Case 1: $d \in l_{-\hat \theta}(b)$.

This case is exactly Inequality (\ref{reference trapezoid}).

\

Case 2: $d$ is below $l_{-\hat \theta}(b)$ (that is, compared to Case 1, the length of the parallel sides are relatively $E$-shorter than the nonparallel sides).

We have that
\[
L_{\hat \theta}.\ell_E ([c, d]) < \ell_E ([d, a, b, c])
\]
and
\[
\ell_{\hat F} ([c,d]) < \ell_{\hat F} ([d, a, b, c])
\]
due to Lemma \ref{compara euclideano com F}.

\

Case 3: $d$ is above $l_{-\hat \theta}(b)$. (that is, compared to Case 1, the length of the parallel sides are relatively $E$-longer than the nonparallel sides).

In this case, $l_{-\hat \theta}(b)$ intercepts $[c,d]$ at an interior point $e$ and $l_{-\hat \theta}(d)$ intercepts $[a,b]$ in an interior point $g$.
Then the trace of $T$ is given by $[a, g, b, c, e, d, a]$ with directions $\left< 8, 8, 6, 2, 2, 10 \right>$ and we have to prove that
$\ell_{\hat F} ([c, e, d]) < \ell_{\hat F} ([d,$ $ a, g, b, c])$.

First of all $\ell_{\hat F} ([e, d]) < \ell_{\hat F} ([g, b])$ holds due to Lemma \ref{lados paralelos}. 

It's worth to notice that if we drop the segments $[e,d]$ and $[g,b]$ from the trapezoid $[a, g, b, c, e, d, a]$ and identify $g$ with $b$ and $d$ with $e$, we get a trapezoid as in Case 1. 
Consequently we have that 
\[
\frac{\ell_E ([d, a, g]) + \ell_E ([b, c])}{\ell_E ([c, e])} = L_{\hat \theta},
\]
what implies that
\[
\ell_{\hat F} ([c, e]) < \ell_{\hat F} ([d, a, g]) + \ell_{\hat F} ([b,c]) 
\]
due to Lemma \ref{compara euclideano com F}.
Thus
\[
\ell_{\hat F} ([c, e, d]) < \ell_{\hat F} ([d, a, g, b, c]).
\]
\end{proof}

\begin{lemma}[Trapezoid sides comparison 3]
\label{trapezio3}
Let $T = [a,b,c,d,a]$ be a trapezoid in $\mathbf S$ with directions $\left< 8, 6, 4, 0 \right>$ or $\left< 4, 6, 8, 0 \right>$.
Then the length of one side is less than the sum of the lengths of the other sides.
\end{lemma}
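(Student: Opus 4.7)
The plan is to derive the conclusion directly from Proposition \ref{dos lados eh maior} applied to the side $[d,a]$ of the trapezoid. In both directional patterns $\langle 8, 6, 4, 0\rangle$ and $\langle 4, 6, 8, 0\rangle$ the side $[d,a]$ has clock-direction $0$, so it is a nontrivial line segment parallel to $l_0$, which is precisely the setting of that proposition.

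I would first check that the concatenation $[a,b,c,d]$ of the remaining three sides is a simplified preferred path connecting $a$ and $d$. Its directions are $\langle 8, 6, 4\rangle$ in the first case and $\langle 4, 6, 8\rangle$ in the second; in each case consecutive directions differ and every direction lies in $\{0,2,4,6,8,10\}$, so $[a,b,c,d]\in\mathbf{S}$. Nondegeneracy of the trapezoid forces $[a,b,c,d]\neq[a,d]$. Applying Proposition \ref{dos lados eh maior} with $p=a$, $q=d$, and $\gamma=[a,b,c,d]$ then yields the strict inequality $\ell_{\hat F}([a,d])<\ell_{\hat F}([a,b,c,d])$. Because the restriction of $\hat F$ to each tangent space is a norm, $\hat F$ is even in the tangent vector, hence $\ell_{\hat F}([d,a])=\ell_{\hat F}([a,d])$, and combining,
\[
\ell_{\hat F}([d,a]) < \ell_{\hat F}([a,b])+\ell_{\hat F}([b,c])+\ell_{\hat F}([c,d]),
\]
which exhibits the side whose length is strictly less than the sum of the lengths of the other three.

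Since the argument reduces to a single application of an already-established result, I do not foresee any substantive obstacle. Both directional patterns are handled in parallel: the only fact used is that the side $[d,a]$ has clock-direction $0$ in each pattern, which is immediate from the hypotheses. In contrast to the situation of Lemma \ref{trapezio2}, no case-by-case analysis depending on the relative placement of vertices along $l_{-\hat\theta}$ is needed, precisely because the ``top'' side of the trapezoid is here itself an admissible $l_0$-parallel side and thus falls directly under Proposition \ref{dos lados eh maior}.
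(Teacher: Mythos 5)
Your single application of Proposition \ref{dos lados eh maior} to the side $[d,a]$ is correct as far as it goes, but it establishes the lemma only under an existential reading (``\emph{some} side is shorter than the sum of the other three''). The statement is meant, and is later used, in the universal sense: \emph{each} of the four sides is shorter than the sum of the other three. This is forced by the way Theorem \ref{um menor do que tres} (which merely collects Lemmas \ref{trapezio1}--\ref{trapezio4}) is invoked in the proof of Proposition \ref{um menor do que tres 2}, where the inequality is needed for a \emph{prescribed} side $[a_1,a_n]$ of the quadrilateral, not for some unspecified one; it is also how the paper argues all the sibling lemmas (e.g.\ Lemma \ref{trapezio2} explicitly runs through all four sides). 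In the trapezoid $\left< 8,6,4,0\right>$ (resp.\ $\left< 4,6,8,0\right>$) the two sides parallel to $l_0$ are $[b,c]$ and $[d,a]$, and your argument covers $[d,a]$ and transfers verbatim to $[b,c]$; but the two slanted sides $[a,b]$ and $[c,d]$ have directions $\left< 8\right>$ and $\left< 4\right>$ (resp.\ $\left< 4\right>$ and $\left< 8\right>$), are not parallel to $l_0$, and are not touched by Proposition \ref{dos lados eh maior}.

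To close the gap you need the non-parallel sides, and this is exactly where the paper's proof differs from yours: it cites Lemma \ref{trapezio1}, which shows that for any non-parallel side $\sigma_1$ of a preferred trapezoid one has $L_{\hat\theta}\,\ell_E(\sigma_1)\leq \ell_E(\sigma_2)$, where $\sigma_2$ is the concatenation of the other three sides (using $L_{\hat\theta}<2$ and the Euclidean geometry of preferred trapezoids), whence $\ell_{\hat F}(\sigma_1)<\ell_{\hat F}(\sigma_2)$ by Lemma \ref{compara euclideano com F}. Adding that citation for $[a,b]$ and $[c,d]$, and noting that $[b,c]$ is handled like $[d,a]$, completes the proof and makes it coincide with the paper's two-line argument.
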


\begin{proof}

The length of one non-parallel side is less than the sum of the length of the other sides due to Lemma \ref{trapezio1}.
The length of a parallel side is less than the sum of the length of the other sides due to Proposition \ref{dos lados eh maior}.
\end{proof}

\begin{lemma}[Parallelogram comparison 3]
\label{paralelogramo3}
Let $P=[a, b, c, d, a]$ be a parallelogram with directions $\left< 8, 4, 2, 10 \right>$.
Then the length of one side is less than the sum of the lengths of the other sides.
\end{lemma}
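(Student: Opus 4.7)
Let $t_1 = \ell_E([a,b])$ and $t_2 = \ell_E([b,c])$, so by the parallelogram property $\ell_E([c,d]) = t_1$ and $\ell_E([d,a]) = t_2$; write $A, B, C, D$ for the $\hat F$-lengths of the four sides in that order. The plan is to verify the four inequalities $A < B+C+D$, $B < A+C+D$, $C < A+B+D$, $D < A+B+C$ separately. First I would use Lemma \ref{lados paralelos} to obtain the ``higher is shorter'' comparisons $A < C$ and $D < B$: the vectors $\overrightarrow{dc} = \overrightarrow{ab} = t_1 \vec v_8$ are equal, and $a - d = t_2 \vec v_{10}$ makes angle $-\pi/3$ with $\vec v_0$, which lies in $[-\hat\theta, \hat\theta]$ because $\hat\theta > \pi/3$; hence $a \in A(d)$, and Lemma \ref{lados paralelos} applied to $(d,c,a,b)$ gives $A < C$. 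The symmetric application based on $\overrightarrow{ad} = \overrightarrow{bc} = t_2 \vec v_4$ and $a - b = t_1 \vec v_2 \in A(b)$ yields $D < B$. These comparisons immediately deliver the polygon inequalities for the sides $[a,b]$ and $[d,a]$, since $A < C \le B + C + D$ and $D < B \le A + B + C$.

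For the two remaining inequalities $B < A + C + D$ and $C < A + B + D$, I would next apply Lemma \ref{compara euclideano com F}. The corresponding Euclidean criteria are $(L_{\hat\theta}-1) t_2 \le 2 t_1$ and $(L_{\hat\theta}-1) t_1 \le 2 t_2$; since $L_{\hat\theta} \in (1,2)$ the threshold $2/(L_{\hat\theta}-1)$ exceeds $2 > 1$, so the two conditions cannot fail at the same time, and at most one elongated case remains. Assume by relabelling that we are left with $B < A + C + D$ in the regime $t_2 > 2 t_1/(L_{\hat\theta}-1)$. For this I would cut $P$ by the vertical preferred lines $l_0(a)$ and $l_0(c)$: under the elongation these meet $[b,c]$ at $e = (0, -t_1)$ and $[d,a]$ at $g = ((t_2 - t_1)\sqrt{3}/2, -(t_2-t_1)/2)$, splitting $P$ into a left equilateral preferred triangle $[a,b,e]$ of directions $\langle 8,4,0\rangle$, a middle preferred parallelogram $[a,e,c,g]$ of directions $\langle 6,4,0,10\rangle$, and a right equilateral preferred triangle $[g,c,d]$ of directions $\langle 6,2,10\rangle$. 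Proposition \ref{triangulo equilatero}(2) gives $C < \ell_{\hat F}([g,c]) + \ell_{\hat F}([d,g])$ on the right triangle; Corollary \ref{paralelogramo}(2) gives $\ell_{\hat F}([g,a]) + \ell_{\hat F}([g,c]) < \ell_{\hat F}([a,e]) + \ell_{\hat F}([e,c])$ on the middle parallelogram; and Lemma \ref{compara euclideano com F} (since $L_{\hat\theta} t_1 < 2 t_1$) gives $\ell_{\hat F}([b,e]) < A + \ell_{\hat F}([a,e])$ on the left triangle. Using $B = \ell_{\hat F}([b,e]) + \ell_{\hat F}([e,c])$ and $D = \ell_{\hat F}([d,g]) + \ell_{\hat F}([g,a])$, I would then combine these three partial inequalities so that the shared vertical edges $[a,e]$ and $[g,c]$ cancel, to yield $B < A + C + D$.

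The principal obstacle is the elongated case. In that regime the crude pointwise bound $f(p) - f(p + t_1 \vec v_2) < L_{\hat\theta} - 1$, and Corollary \ref{paralelogramo}(1) applied to $P$ alone, are each too loose to control $B - D$, because the potential deficit $(L_{\hat\theta}-1) t_2$ overwhelms the $2 t_1$ supplied by the two short sides. The three-piece cut by $l_0(a)$ and $l_0(c)$ is tailored so that the interior edges cancel, and the value of $L_{\hat\theta}$ was calibrated precisely to make the reference-type inequalities on each sub-piece combine without loss; arranging this cancellation cleanly, and in particular controlling the top-apex triangle $[a,b,e]$ (which falls outside the explicit list of Proposition \ref{triangulo equilatero} and must be handled through the Euclidean comparison instead), is where the bulk of the work lies.
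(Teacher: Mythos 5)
Your first half is sound and matches the paper: Lemma \ref{lados paralelos} gives $A<C$ and $D<B$, which disposes of the two upper sides, and Lemma \ref{compara euclideano com F} (equivalently Lemma \ref{paralelogramo2}) disposes of the $E$-shorter lower side, leaving only the $E$-longer lower side in the elongated regime. The gap is in the final combination for that remaining case. Write $u=\ell_{\hat F}([b,e])$, $x=\ell_{\hat F}([a,e])$, $y=\ell_{\hat F}([e,c])$, $z=\ell_{\hat F}([g,c])$, $w=\ell_{\hat F}([g,a])$, $v=\ell_{\hat F}([d,g])$. Your three inequalities are $u<A+x$, $w+z<x+y$, and $C<z+v$, and the target is $u+y<A+C+v+w$. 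These do not combine: the values $A=x=1$, $u=1.5$, $y=10$, $w=z=v=1$, $C=1.5$ satisfy all three hypotheses while violating the target, so no cancellation argument can close this. The structural reason is that both the middle and the right inequalities point the wrong way. Corollary \ref{paralelogramo}(2) says the \emph{upper} path $[a,g,c]$ is shorter than the \emph{lower} path $[a,e,c]$, i.e.\ it gives a \emph{lower} bound on $x+y$, whereas you need an \emph{upper} bound on $y$ (a piece of $B$); and Proposition \ref{triangulo equilatero}(2) gives an upper bound on $C$, which sits on the right-hand side of the target, where you need the item (1) inequality $z<C$ instead. Moreover the internal edge $[a,e]$ enters with a $+x$ from the left triangle and another $+x$ from any usable middle estimate (e.g.\ Lemma \ref{paralelogramo4} gives $y<x+z+w$), so it accumulates rather than cancels; there is no available inequality with $-x$ on its right-hand side.

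The repair is to cut only once, by $l_0(c)$, and to invoke Lemma \ref{trapezio2} --- which your proposal never uses, even though it is precisely where the calibration of $L_{\hat\theta}$ via the reference trapezoid (\ref{reference trapezoid}) and the case analysis on $l_{-\hat\theta}$ actually do their work; the remark in your last paragraph attributes that calibration to your three sub-pieces, but none of the lemmas you apply to them consumes it. Concretely, in your labelling $[a,g,c,b,a]$ is a trapezoid with directions $\left<4,6,10,2\right>$, so Lemma \ref{trapezio2} yields $B=\ell_{\hat F}([c,b])<\ell_{\hat F}([b,a,g,c])=A+w+z$; then $z<C$ by Proposition \ref{triangulo equilatero}(1) applied to $[g,c,d,g]$ and $w<D$ because $[a,g]$ is a proper subsegment of $[a,d]$, giving $B<A+C+D$. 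This is the mirror image of the paper's own argument (which treats the case where $[c,d]$ is the longer lower side via the trapezoid $[a,e,c,d,a]$ with directions $\left<8,6,2,10\right>$). A small side remark: your left triangle $[a,b,e,a]$ is not actually outside the scope of Proposition \ref{triangulo equilatero}, since its reversal $[a,e,b,a]$ has directions $\left<6,10,2\right>$; but that triangle is not needed once the single-cut reduction is used.
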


\begin{proof}

An upper side is smaller than its parallel lower side due to Lemma \ref{lados paralelos}.
The length of the $E$-smaller lower side is less than than the sum of the lengths of the other sides due to Lemma \ref{paralelogramo2}.
Thus we only have to prove that the length of the lower $E$-larger side is less than the sum of the lengths of the other sides.  

In order to fix ideas, suppose that the $E$-larger lower side of $P$ is $[c,d]$.
The other case is analogous.
Set $e = h_0(c) \cap [a,b]$.
Then $[a, e, c, d, a]$ is a trapezoid with directions $\left< 8, 6, 2, 10\right>$ and Lemma \ref{trapezio2} states that
\[
\ell_{\hat F}([c,d]) < \ell_{\hat F} ([d, a, e, c]).
\]  
But
\[
\ell_{\hat F} ([e,c])< \ell_{\hat F} ([b,c])
\]
due to Proposition \ref{triangulo equilatero}.
Therefore
\[
\ell_{\hat F} ([c,d]) < \ell_{\hat F} ([d, a, b, c]).
\]
\end{proof}

\begin{lemma}[Parallelogram comparison 4]
\label{paralelogramo4}
Let $P=[a, b, c, d, a]$ be a parallelogram with directions $\left< 6, 4, 0, 10 \right>$ or $\left< 8, 6, 2, 0 \right>$.
Then the length of one side is less than the sum of the length of the other sides.
\end{lemma}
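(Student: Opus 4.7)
\emph{Proof plan.} Let $P=[a,b,c,d,a]$ be the given parallelogram. A direct check shows that in both direction patterns, within each pair of parallel sides one side has a strictly higher midpoint (in the $x^2$-coordinate) than its partner, and that the two upper sides are precisely $ab$ and $da$. Applying Lemma \ref{lados paralelos} to each parallel pair gives $\ell_{\hat F}(ab)<\ell_{\hat F}(cd)$ and $\ell_{\hat F}(da)<\ell_{\hat F}(bc)$; each of these bounds an upper side by a single other side alone, so the desired inequalities for $ab$ and $da$ follow \emph{a fortiori}.

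For the lower vertical side (which is $cd$ in direction $0$ for $\langle 6,4,0,10\rangle$ and $bc$ in direction $6$ for $\langle 8,6,2,0\rangle$, in either case parallel to $l_0$) I invoke Proposition \ref{dos lados eh maior}: the concatenation of the other three sides is a simplified preferred path joining the endpoints of this side, so its length is strictly greater, with strictness because this concatenation differs from the side itself.

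The remaining and principal obstacle is the lower diagonal side ($bc$ in direction $4$ for $\langle 6,4,0,10\rangle$, or $cd$ in direction $2$ for $\langle 8,6,2,0\rangle$). Let $p$ be the common Euclidean length of the diagonal sides and $q$ that of the vertical sides. If $p\leq q$, the lower diagonal is $E$-shorter than its parallel partner (or $P$ is a diamond), so Lemma \ref{paralelogramo2} concludes at once. Assume from now on that $p>q$; then the plan is to cut off a Euclidean-equilateral preferred triangle from the ``top'' of $P$ so that what remains is a preferred trapezoid falling under Lemma \ref{trapezio2}. For $\langle 6,4,0,10\rangle$, take $e=l_2(b)\cap[a,d]$, which lies strictly between $a$ and $d$; this yields the equilateral triangle $[a,b,e,a]$ with directions $\langle 6,2,10\rangle$ and the trapezoid $[e,d,c,b,e]$ with directions $\langle 4,6,10,2\rangle$. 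For $\langle 8,6,2,0\rangle$, take $e\in[a,b]$ with $\overrightarrow{ae}=q\,\vec v_8$, which lies strictly between $a$ and $b$; this yields the equilateral triangle $[a,e,d,a]$ with directions $\langle 8,4,0\rangle$ and the trapezoid $[e,b,c,d,e]$ with directions $\langle 8,6,2,10\rangle$.

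Lemma \ref{trapezio2} then bounds the lower diagonal side by the sum of the three other trapezoid sides, and an equilateral-triangle inequality converts the ``new'' side appearing in this bound ($be$ in the first case, $ed$ in the second) back into contributions from the parallelogram's sides: in the first case Proposition \ref{triangulo equilatero}(2) yields $\ell_{\hat F}(be)<\ell_{\hat F}(ae)+\ell_{\hat F}(ab)$; in the second case the triangle $[a,e,d,a]$ belongs to the $\{0,4,8\}$ family, which is not covered by Proposition \ref{triangulo equilatero}, and the step that bypasses this is to appeal instead to Lemma \ref{compara euclideano com F}, which gives $\ell_{\hat F}(ed)<\ell_{\hat F}(ae)+\ell_{\hat F}(da)$ directly from $L_{\hat\theta}<2$ and the fact that all three sides have equal Euclidean length. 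The collinear additivities $\ell_{\hat F}(ae)+\ell_{\hat F}(ed)=\ell_{\hat F}(da)$ and $\ell_{\hat F}(ae)+\ell_{\hat F}(eb)=\ell_{\hat F}(ab)$ then rearrange the combined estimate into the target inequality. The main technical point is this last detour through Lemma \ref{compara euclideano com F}, needed precisely because Proposition \ref{triangulo equilatero} is stated only for one of the two equilateral families.
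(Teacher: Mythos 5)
Your proof is correct and follows essentially the same route as the paper's: dispose of the upper sides by Lemma \ref{lados paralelos}, the vertical sides by Proposition \ref{dos lados eh maior}, the $E$-shorter lower side by Lemma \ref{paralelogramo2}, and handle the $E$-longer lower diagonal by cutting off an equilateral preferred triangle so that the remainder is a trapezoid covered by Lemma \ref{trapezio2}. The only deviation is your detour through Lemma \ref{compara euclideano com F} in the $\left<8,6,2,0\right>$ case, which is valid but rests on a misreading: the triangle $[a,e,d,a]$ with directions $\left<8,4,0\right>$ is the same closed path as $[a,d,e,a]$ with directions $\left<6,10,2\right>$, so it \emph{is} covered by Proposition \ref{triangulo equilatero} (the two ``families'' $\{0,4,8\}$ and $\{2,6,10\}$ are just the two orientations of traversal of one and the same preferred equilateral triangle, and $\ell_{\hat F}$ is reversal-invariant since $\hat F$ is a norm on each tangent space).
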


\begin{proof}

An upper side is smaller than its parallel lower side due to Lemma \ref{lados paralelos}.
The length of a vertical side is less than the sum of the lengths of the other sides due to Proposition \ref{dos lados eh maior}.
The length of the $E$-smaller lower side is less than than the sum of the lengths of the other sides due to Lemma \ref{paralelogramo2}.
Thus we have only to consider the case where the $E$-larger lower side $\sigma$ isn't parallel to $l_0$ and prove that the length of $\sigma$ is less than the sum of the length of the other sides

Consider the case $\left< 6, 4, 0, 10 \right>$ with $\ell_E([b,c]) > \ell_E ([c,d])$.
The other case is analogous.
We have to prove that
\[
\ell_{\hat F}([b,c]) < \ell_{\hat F}([c,d,a,b]).
\]
Let $e=h_2(b) \cap [a,d]$.
Then $[e, d, c, b, e]$ is a trapezoid with directions $\left< 4, 6, 10, 2\right>$ and 
\[
\ell_{\hat F} ([b,c]) < \ell_{\hat F} ([b, e, d, c])
\]
due to Lemma \ref{trapezio2}.
But
\[
\ell_{\hat F} ([b, e]) < \ell_{\hat F} ([b, a]) + \ell_{\hat F} ([a, e])
\]
due to Proposition \ref{triangulo equilatero}, what implies that
\[
\ell_{\hat F} ([b, c]) < \ell_{\hat F} ([b, a, d, c]).
\]
\end{proof}

\begin{lemma}[Trapezoid sides comparison 4]
\label{trapezio4}
Let $[a,b,c,d,a]$ be a trapezoid in $\mathbf S$ with directions $\left< 4, 8, 10, 0 \right>$ or $\left< 8, 4, 2, 0 \right>$.
Then the length of one side is less than the sum of the length of the other sides.
\end{lemma}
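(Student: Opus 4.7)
The plan is to handle the four sides one at a time, in each case reducing the comparison either to a direct Euclidean estimate via Lemma \ref{compara euclideano com F} or to one of the earlier trapezoid/parallelogram lemmas. Closing the trapezoid in coordinates, the side lengths in the $\langle 4, 8, 10, 0\rangle$ case take the form $\ell_E([a,b]) = t_2+t_3$, $\ell_E([b,c]) = \ell_E([d,a]) = t_2$, and $\ell_E([c,d]) = t_3$ for some $t_2, t_3 > 0$, with $[a,b]$ (dir $4$) the upper and $[c,d]$ (dir $10$) the lower parallel side. The case $\langle 8, 4, 2, 0\rangle$ is completely analogous and, as noted at the end, even slightly cleaner.

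For the non-parallel sides $[b,c]$ and $[d,a]$, Lemma \ref{trapezio1} applies immediately: $L_{\hat\theta} t_2 < 2t_2 \leq 2t_2 + 2t_3$. For the upper parallel side $[a,b]$, which is parallel to $l_4$, the three remaining sides form a simplified preferred path from $a$ to $b$ whose interior vertices $c$ and $d$ lie strictly below $l[a,b]$ (a one-line verification from the coordinates), so Proposition \ref{abaixo eh maior 1} yields $\ell_{\hat F}([a,b]) < \ell_{\hat F}([b,c,d,a])$.

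The main obstacle is the lower parallel side $[c,d]$. Here the bare Euclidean inequality $L_{\hat\theta} t_3 \leq 3t_2 + t_3$ fails as soon as $t_3$ is sufficiently large compared to $t_2$, and the complementary path lies \emph{above} $[c,d]$, so Proposition \ref{abaixo eh maior 1} is not available either. My plan is to cut the trapezoid along the vertical segment from $c$: set $e := h_0(c) \cap [a,b]$. A direct check shows that $e$ lies strictly between $a$ and $b$, that $[a,e,c,d,a]$ is a preferred parallelogram with directions $\langle 4, 6, 10, 0 \rangle$, and that $[e,b,c,e]$ is a preferred equilateral triangle with directions $\langle 4, 8, 0 \rangle$ and all three sides of $E$-length $t_2$. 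Traversed in reverse, the parallelogram becomes $[a, d, c, e, a]$ with directions $\langle 6, 4, 0, 10 \rangle$, which is precisely the first case of Lemma \ref{paralelogramo4}; since the conclusion of that lemma is a statement about unoriented side lengths, it gives
\[
\ell_{\hat F}([c,d]) < \ell_{\hat F}([d,a]) + \ell_{\hat F}([a,e]) + \ell_{\hat F}([e,c]).
\]
For the equilateral triangle, Lemma \ref{compara euclideano com F} applies directly, since $L_{\hat\theta} t_2 < 2 t_2$, and gives $\ell_{\hat F}([e,c]) < \ell_{\hat F}([e,b]) + \ell_{\hat F}([b,c])$. Substituting this into the previous inequality, and using that $[a,e]$ and $[e,b]$ concatenate along $[a,b]$, collapses the right-hand side to $\ell_{\hat F}([d,a,b,c])$, as desired.

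Finally, in the $\langle 8, 4, 2, 0\rangle$ case the analogous vertical cut produces a parallelogram with directions $\langle 8, 6, 2, 0 \rangle$, which is already the second case of Lemma \ref{paralelogramo4}; no reversal step is needed and the remainder of the argument is identical.
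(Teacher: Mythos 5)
Your proof is correct and follows essentially the same route as the paper's: the non-parallel sides via Lemma \ref{trapezio1}, the upper parallel side via Proposition \ref{abaixo eh maior 1}, and the lower parallel side by cutting at $e = h_0(c) \cap [a,b]$ and invoking Lemma \ref{paralelogramo4} for the resulting parallelogram. The only (immaterial) deviation is that for the triangle $[e,b,c,e]$ you use the triangle inequality via Lemma \ref{compara euclideano com F}, whereas the paper uses the sharper side comparison $\ell_{\hat F}([e,c]) < \ell_{\hat F}([b,c])$ from Proposition \ref{triangulo equilatero}; both close the argument.
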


\begin{proof}

We analyse the case $\left< 4, 8, 10, 0\right>$.
The other case is analogous.

The length of a non-parallel side is smaller than the sum of the length of the other sides due to Lemma \ref{trapezio1}.
In addition we have that $\ell_{\hat F} ([a,b]) < \ell_{\hat F} ([b, c, d, a])$ due to Proposition \ref{abaixo eh maior 1}.

Let us prove that $\ell_{\hat F} ([c, d]) < \ell_{\hat F} ([d, a, b, c])$.
Denote $e = h_0(c) \cap [a,b]$. 
The parallelogram $[a, e, c, d, a]$ has directions $\left< 4, 6, 10, 0\right>$ and
\[
\ell_{\hat F} ([c, d]) < \ell_{\hat F} ([d, a, e, c])
\]
due to Lemma \ref{paralelogramo4}.

For the triangle $[e, b, c, e]$,
\[
\ell_{\hat F} ([e, c]) < \ell_{\hat F} ([b, c])
\]
holds due to Proposition \ref{triangulo equilatero}.
Therefore
\[
\ell_{\hat F} ([c, d]) < \ell_{\hat F} ([d, a, e]) + \ell_{\hat F}([b, c])< \ell_{\hat F} ([d, a, b, c]).
\]
\end{proof}

Now we can join the results from Lemmas \ref{trapezio1} to Lemma \ref{trapezio4} in order to present a unique result:

\begin{theorem}
\label{um menor do que tres}
Let $Q = [a, b, c, d, a]$ be a trapezoid or a parallelogram in $\mathbf S$.
Then the length of one of its sides is less than the sum of the length of the other three sides.
\end{theorem}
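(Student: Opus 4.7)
The theorem consolidates the various side-length comparisons proved earlier in the section, so my plan is simply to split on the two possible shape types and, within each, on the admissible direction patterns, matching each pattern to the appropriate lemma that has already been established.

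First I would dispose of the existential content. Since simplified preferred paths use only the directions $\{0,2,4,6,8,10\}$ and the parallel (opposite) pairs are $(0,6)$, $(2,8)$, $(4,10)$, any trapezoid or parallelogram in $\mathbf{S}$ has its parallel sides drawn from one of these three pairs. If $Q$ is a trapezoid, Lemma \ref{trapezio1} already supplies a non-parallel side whose length is strictly less than the sum of the lengths of the other three, settling the theorem in this case. If $Q$ is a parallelogram, Lemma \ref{paralelogramo2} supplies an $E$-shorter side (or any side, if $Q$ is a diamond) with the same property. Thus the conclusion of the theorem, interpreted as the existence of \emph{some} side satisfying the inequality, is immediate.

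If one wishes to record the stronger fact that \emph{every} side of $Q$ satisfies the inequality (which is what the intermediate lemmas actually prove on a case-by-case basis), one would enumerate the admissible direction patterns up to cyclic relabeling of vertices. For parallelograms there are exactly three types, corresponding to the three unordered pairs of direction classes modulo opposites: $\langle 8,4,2,10\rangle$ (handled by Lemma \ref{paralelogramo3}), and $\langle 6,4,0,10\rangle$ and $\langle 8,6,2,0\rangle$ (both handled by Lemma \ref{paralelogramo4}). For trapezoids, fixing the parallel pair and running through the admissible non-parallel direction assignments produces the patterns $\langle 8,6,2,10\rangle, \langle 4,6,10,2\rangle$ (Lemma \ref{trapezio2}); $\langle 8,6,4,0\rangle, \langle 4,6,8,0\rangle$ (Lemma \ref{trapezio3}); and $\langle 4,8,10,0\rangle, \langle 8,4,2,0\rangle$ (Lemma \ref{trapezio4}). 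Each of these lemmas proves the inequality for every side of its configuration.

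The main obstacle is organizational rather than conceptual: one must confirm that the direction patterns listed in Lemmas \ref{trapezio2}--\ref{trapezio4} and \ref{paralelogramo3}--\ref{paralelogramo4} genuinely exhaust all admissible direction sequences of trapezoids and parallelograms in $\mathbf{S}$, and track which edge plays which role in each invocation. For the theorem as stated, however, the proof is a one-line appeal to Lemma \ref{trapezio1} in the trapezoid case and Lemma \ref{paralelogramo2} in the parallelogram case.
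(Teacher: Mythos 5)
Your proposal is correct and matches the paper, which gives no separate argument for this theorem beyond ``joining'' Lemmas \ref{trapezio1}--\ref{trapezio4}; the case enumeration you sketch (three parallelogram types and six trapezoid direction patterns, each matched to its lemma) is exactly the intended content. One caveat: the theorem is later invoked (in Proposition \ref{um menor do que tres 2}) for a \emph{specific}, geometrically determined side of the quadrilateral, so the universal reading you record in your second and third paragraphs is the one actually required --- the ``one-line'' existential appeal to Lemma \ref{trapezio1} and Lemma \ref{paralelogramo2} alone would not suffice for the paper's purposes.
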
 

The next result helps to replace some preferred paths above $[p,q]_{\text{min}}$ by shorter preferred paths closer to $[p,q]_{\text{min}}$ (see Lemma \ref{segmentos minimizantes}). 

\begin{proposition}
\label{um menor do que tres 2} 
Let $Q = [a_1 , a_2, a_3, \ldots, a_n] \in \mathbf S$ and $k \in \{0, 2, 4\}$ such that 
\begin{enumerate}
\item $a_n \in l_k(a_1)$;
\item $a_3, a_4, \ldots, a_{n-1} \in l_k(a_2)$;
\item $l_k (a_1) \neq l_k(a_2)$.
\end{enumerate}
Then 
\begin{equation}
\label{desigualdade cruzando}
\ell_{\hat F}([a_1,a_n]) < \ell_{\hat F}([a_1, a_2, \ldots, a_n]).
\end{equation}
\end{proposition}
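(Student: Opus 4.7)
I would reduce Proposition \ref{um menor do que tres 2} to the triangle and quadrilateral side-comparison results of Section \ref{preferred paths}, via two stages.

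\emph{Stage 1 (collapsing the zigzag on $l_k(a_2)$).} All of $a_2, a_3, \ldots, a_{n-1}$ lie on the single line $l_k(a_2)$. Parametrising this line by signed arclength and writing $g(s) = f(\cdot)\, F_0(\vec v_k)$ along it (so $g > 0$), the elementary inequality
\[
\sum_{i=2}^{n-2} \Bigl| \int_{s_i}^{s_{i+1}} g(s)\, ds \Bigr| \;\geq\; \Bigl|\int_{s_2}^{s_{n-1}} g(s)\, ds\Bigr|
\]
gives $\ell_{\hat F}([a_2, a_3, \ldots, a_{n-1}]) \geq \ell_{\hat F}([a_2, a_{n-1}])$, with equality iff $(s_i)$ is monotonic in $i$. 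Since $Q \in \mathbf S$, consecutive segments on $l_k(a_2)$ alternate between directions $k$ and $k+6$, so monotonicity fails as soon as $n \geq 5$. It therefore suffices to prove the strict inequality $\ell_{\hat F}([a_1, a_n]) < \ell_{\hat F}([a_1, a_2, a_{n-1}, a_n])$.

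\emph{Stage 2 (four-vertex comparison).} If $a_2 = a_{n-1}$, the right-hand side is a preferred (equilateral) triangle with side $[a_1, a_n]$ parallel to $l_k$; since $\ell_E([a_1,a_2]) + \ell_E([a_2,a_n]) = 2\ell_E([a_1, a_n]) > L_{\hat\theta}\, \ell_E([a_1,a_n])$, Lemma \ref{compara euclideano com F} yields the desired strict inequality (this uniformises both orientations, Type-B being also covered by Proposition \ref{triangulo equilatero}(2)). Otherwise $[a_1, a_2, a_{n-1}, a_n, a_1]$ is a trapezoid or parallelogram in $\mathbf S$ whose parallel sides are $[a_2, a_{n-1}]$ and $[a_n, a_1]$, both parallel to direction $k$. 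For $k = 0$, Proposition \ref{dos lados eh maior} applied to $p = a_1$, $q = a_n$ and $\gamma = [a_1, a_2, a_{n-1}, a_n]$ gives the strict inequality at once, since $l_0(a_1) \neq l_0(a_2)$ forces $\gamma \neq [a_1, a_n]$. For $k \in \{2, 4\}$, the path $\gamma$ lies entirely on one side of $l[a_1, a_n]$; if on the lower side, Proposition \ref{abaixo eh maior 1} applies directly, and if on the upper side, a short case analysis on the directions of $[a_1, a_2]$ and $[a_{n-1}, a_n]$ matches the quadrilateral with a specific one of Lemmas \ref{trapezio2}--\ref{trapezio4} or \ref{paralelogramo2}--\ref{paralelogramo4}, whose proof identifies $[a_n, a_1]$ as strictly less than the sum of the other three sides.

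\emph{Main obstacle.} The upper-side subcase for $k \in \{2, 4\}$ is the only situation not handled by a single off-the-shelf proposition (Proposition \ref{dos lados eh maior} requires $k = 0$, while Proposition \ref{abaixo eh maior 1} requires $\gamma$ to lie in the lower half-plane). Here one must invoke the quadrilateral-comparison lemmas of Section \ref{preferred paths} configuration by configuration, tracking the $E$-aspect ratio and the upper/lower labelling of the parallel pair $\{[a_2, a_{n-1}], [a_n, a_1]\}$ to verify that it is $[a_n, a_1]$ — and not $[a_2, a_{n-1}]$, which by Lemma \ref{lados paralelos} is in fact the \emph{shorter} parallel side in this orientation — that each applicable lemma singles out as strictly less than the sum of the other three sides.
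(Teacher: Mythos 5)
Your reduction to the four-vertex comparison $\ell_{\hat F}([a_1,a_n]) < \ell_{\hat F}([a_1,a_2,a_{n-1},a_n])$ is sound and matches the paper's (which asserts the collapsing step without the integral argument you supply), and your triangle case, your $k=0$ case, and your lower-half-plane case are all fine, since Propositions \ref{dos lados eh maior} and \ref{abaixo eh maior 1} apply to arbitrary paths in $\mathbf S$. The gap is in the remaining case: you assert that when $a_2 \neq a_{n-1}$ and $a_1 \neq a_n$ the closed path $[a_1,a_2,a_{n-1},a_n,a_1]$ is a trapezoid or parallelogram. That fails when the transversal segments $[a_1,a_2]$ and $[a_{n-1},a_n]$ intersect at an interior point: the quadrilateral is then crossed (non-simple), and none of Lemmas \ref{trapezio1}--\ref{trapezio4} or \ref{paralelogramo2}--\ref{paralelogramo4} applies to it. This configuration really does occur in exactly the subcase you route through those lemmas ($k\in\{2,4\}$ with the path above $l_k(a_1)$): take $k=2$, $a_1=(0,0)$, $a_2=(0,1)$, $a_{n-1}=(-\sqrt{3}/4,3/4)$, $a_n=(\sqrt{3}/4,1/4)$; the path has directions $\left< 0,8,4\right>$, lies above $l_2(a_1)$, and $[a_1,a_2]\cap[a_{n-1},a_n]=\{(0,1/2)\}$.

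The paper closes this case by a different device: writing $e$ for the intersection point, $[a_1,e,a_n,a_1]$ is a preferred triangle, so $\ell_{\hat F}([a_1,a_n]) < \ell_{\hat F}([a_1,e]) + \ell_{\hat F}([e,a_n])$ by Proposition \ref{triangulo equilatero}, and the right-hand side is trivially less than $\ell_{\hat F}([a_1,a_2,a_{n-1},a_n]) = \ell_{\hat F}([a_1,e,a_2,a_{n-1},e,a_n])$ because the omitted middle portion $[e,a_2,a_{n-1},e]$ has positive length. Adding this branch (and a word on the trivial degeneracy $a_1=a_n$) would complete your argument. For the non-crossing branch the paper simply cites Theorem \ref{um menor do que tres}, which already packages the configuration-by-configuration analysis you propose to redo lemma by lemma, so your Stage 2 could be shortened considerably once the crossed case is handled separately.
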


\begin{proof}

If $a_1 = a_n$, then (\ref{desigualdade cruzando}) is trivial. 
If $a_2 = a_{n-1}$, then Proposition \ref{triangulo equilatero} proves (\ref{desigualdade cruzando}). 
Then we can suppose that $a_1 \neq a_n$ and $a_2 \neq a_{n-1}$ and it is enough to prove that
\begin{equation}
\label{desigualdade cruzando simplificado}
\ell_{\hat F}([a_1, a_n])< \ell_{\hat F} ([a_1, a_2, a_{n-1}, a_n]).
\end{equation}
If $[a_1, a_2] \cap [a_{n-1},a_n] = \emptyset$, then $[a_1, a_2, a_{n-1}, a_n, a_1]$ is a parallelogram or a trapezoid and Theorem \ref{um menor do que tres} settles (\ref{desigualdade cruzando simplificado}). 
Otherwise we have that $[a_1,a_2] \cap [a_{n-1},a_n] = \{e\}$, where $e$ is an interior point of $[a_1,a_2]$ and $[a_{n-1},a_n]$. 
The path $[a_1, e, a_n, a_1] \in \mathbf S$ is a triangle and
\[
\ell_{\hat F}([a_1,a_n]) < \ell_{\hat F} ([a_1, e]) + \ell_{\hat F}([e,a_n]) < \ell_{\hat F}([a_1, e, a_2, a_{n-1} , e , a_n]) 
\]
\[
= \ell_{\hat F}([a_1, a_2, a_{n-1}, a_m]),
\]
where the first inequality is due to Proposition \ref{triangulo equilatero}.
\end{proof}

Although we don't use the next theorem in this work, it is worth to present it here.
It states that $Q$ doesn't need to be a parallelogram or a trapezoid in Theorem \ref{um menor do que tres}.
Observe that it also generalizes Proposition \ref{um menor do que tres 2}.

\begin{theorem}
\label{um menor do que tres 3}
Let $[a, b, c, d, a] \in \mathbf S$. 
Then the length of one of its line segments is less than the sum of the length of the other three line segments.
\end{theorem}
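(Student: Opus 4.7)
The plan is to reduce to Theorem \ref{um menor do que tres} via a pigeonhole argument on direction classes, combined with an auxiliary decomposition when needed, following the same strategy as in the proofs of Lemmas \ref{trapezio4}, \ref{paralelogramo3} and \ref{paralelogramo4}. Partition the six preferred unit vectors into three line classes $L_1=\{\vec v_0,\vec v_6\}$, $L_2=\{\vec v_2,\vec v_8\}$, $L_3=\{\vec v_4,\vec v_{10}\}$, which are pairwise linearly independent. Since $Q=[a,b,c,d,a]$ has four sides distributed among only three classes, pigeonhole forces at least two sides into the same class. If two \emph{opposite} sides of $Q$ share a class, they lie on parallel lines and $Q$ is a trapezoid or parallelogram in the sense of Section \ref{preferred paths}, so Theorem \ref{um menor do que tres} concludes at once.

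Otherwise the only coclass pair consists of two \emph{adjacent} sides. Since consecutive directions in $\mathbf S$ must differ, these two sides are in fact anti-parallel, so $Q$ backtracks at that vertex. Resolving the closure equation $\overrightarrow{ab}+\overrightarrow{bc}+\overrightarrow{cd}+\overrightarrow{da}=\vec 0$ using the linear independence of the classes forces either a coincidence of two vertices of $Q$, so $Q$ collapses to two anti-parallel spikes from a single vertex (in which case the inequality is trivial, because the minimum of four positive side lengths is less than half their sum), or the supporting line of the anti-parallel pair to contain another vertex of $Q$ in the interior of one of its sides. In the latter case, after relabeling one may assume $a$ lies in the interior of $[b,c]$; then $\ell_{\hat F}([b,c])=\ell_{\hat F}([b,a])+\ell_{\hat F}([a,c])$ and the remaining three segments $[a,c]$, $[c,d]$, $[d,a]$ form a preferred equilateral triangle in $\mathbf S$. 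Applying Lemma \ref{compara euclideano com F} to this triangle exactly as in the proof of item 2 of Proposition \ref{triangulo equilatero} (the argument only uses $L_{\hat\theta}<2$ and the equality of the three Euclidean side lengths) gives $\ell_{\hat F}([a,c])<\ell_{\hat F}([c,d])+\ell_{\hat F}([d,a])$, and adding the split for $[b,c]$ yields
\[
\ell_{\hat F}([b,c])<\ell_{\hat F}([a,b])+\ell_{\hat F}([c,d])+\ell_{\hat F}([d,a]).
\]

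The main obstacle I anticipate is the case enumeration. One must tabulate every closing direction sequence $\langle\alpha_1,\alpha_2,\alpha_3,\alpha_4\rangle$ not covered by Theorem \ref{um menor do que tres} and verify that the closure equation does force the vertex coincidence or vertex-on-side incidence described above. For any configuration in which neither of these happens, one instead inserts an auxiliary vertex, exactly as in Lemmas \ref{trapezio4} and \ref{paralelogramo4}, by intersecting a suitable preferred line through a vertex of $Q$ with a non-adjacent side; this splits $Q$ into a preferred trapezoid (handled by Theorem \ref{um menor do que tres}) and a preferred triangle (handled by Proposition \ref{triangulo equilatero}), and summing the two resulting inequalities delivers the claim.
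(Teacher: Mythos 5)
Your pigeonhole organization is clean and your Case (II) analysis is essentially right: an adjacent coclass pair must be anti-parallel, and writing the closure equation in the basis given by one unit vector from each of the other two classes (where every preferred vector has coordinates $\pm 1$) does force either a vertex coincidence or a vertex in the interior of a side together with a residual preferred equilateral triangle. The genuine gap is in Case (I). From ``two opposite sides share a class'' you infer that $Q$ is a trapezoid or a parallelogram, but that implication fails when the two opposite coclass sides point in the \emph{same} direction rather than being anti-parallel. Concretely, take directions $\left< 0,4,0,8\right>$: the closure equation forces $E$-lengths $s,\ s+u,\ u,\ s+u$, and the resulting element of $\mathbf S$ is a crossed ``bowtie'' --- the non-coclass opposite sides $[b,c]$ and $[d,a]$ meet at an interior point $e$, and $Q$ decomposes into the two preferred equilateral triangles $[a,b,e,a]$ and $[e,c,d,e]$. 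This $Q$ is neither a trapezoid nor a parallelogram, so Theorem \ref{um menor do que tres} does not apply to it, and your fallback (an auxiliary vertex splitting $Q$ into a trapezoid plus a triangle) does not produce the needed decomposition either. The same sub-case also contains the fully degenerate quadrilaterals with all four sides in one class, e.g.\ $\left< 0,6,0,6\right>$, which are collinear. Both omissions are repairable --- split the bowtie at $e$ and apply Proposition \ref{triangulo equilatero} to each triangle, exactly as at the end of the proof of Proposition \ref{um menor do que tres 2}, and in the collinear case observe that the other three sides traverse a path whose trace covers the given side non-monotonically --- but as written your Case (I) is incomplete.

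A secondary point: for the statement to generalize Proposition \ref{um menor do que tres 2}, and to be non-trivial at all (the ``some side'' reading already follows from Lemma \ref{compara euclideano com F} applied to the $E$-shortest side, since $L_{\hat\theta}<2<3$), ``one of its line segments'' must be read as ``each of its line segments''. In Case (II) you only prove the inequality for the long side $[b,c]$ of the anti-parallel pair; the other three sides still need their (easy) companion inequalities, e.g.\ $\ell_{\hat F}([a,b])=\ell_{\hat F}([b,a])<\ell_{\hat F}([b,c])$ because $[b,a]\subset[b,c]$, plus the full triangle inequality for the preferred equilateral triangle, which follows by combining items (1) and (2) of Proposition \ref{triangulo equilatero}. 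Since the paper itself only sketches an enumeration over the placement of $c$ relative to $a$ and of $d\in l_{k_1}(c)\cap l_{k_2}(a)$, your scheme is a legitimate and arguably tidier organization of the cases, but it must account for the crossed and collinear closures above.
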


\textsc{Idea of the proof.} 
There are several cases to consider, but all of them can be solved easily using Theorem \ref{triangulo equilatero} and Theorem \ref{um menor do que tres}.
The analysis can be made in the following way:
\begin{itemize}
\item Begin with an arbitrary path $[a,b] \in \mathbf S$;
\item Consider all possible directions for $[b,c]\in \mathbf S$. 
For each direction, analyse the placement of $c$ in each of the subsets $S_k(a)$ or $h_k(a)$;
\item Observe that $d \in l_{k_1}(c) \cap l_{k_2}(a)$ for some $k_1, k_2 \in \{0,2,4\}$. 
\end{itemize}
The details of the proof are left to the reader.

\section{$\mathbf S$-minimizing paths}
\label{s-minimizing}

\begin{definition}[$\mathbf S$-minimizing paths]
\label{P-minimizing path}
Let $p,q \in (\hat M, \hat F)$. 
A path $\gamma$ in $\mathbf S$ connecting $p$ and $q$ is a $\mathbf S$-minimizing path if $\ell_{\hat F} (\gamma)\leq \ell_{\hat F} (\tilde \gamma)$ for every $\tilde \gamma\in \mathbf S$ that connects $p$ and $q$.
\end{definition}

In this section we prove that if $p,q \in \hat M$, then $[p,q]_{\text{min}}$ is the unique $\mathbf S$-minimizing path which connects $p$ and $q$.

\begin{lemma}
\label{segmentos minimizantes}
Suppose that $[a,b] \in \mathbf S$.
Then $[a,b]$ is the unique $\mathbf S$-minimizing path that connects $a$ and $b$.
\end{lemma}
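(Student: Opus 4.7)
My plan is a case analysis on the direction of the preferred segment $[a,b]$, combined with induction on the number of vertices of a competing path. Since $[a,b] \in \mathbf S$ is a single preferred segment, its direction lies in $\{\langle 0\rangle, \langle 2\rangle, \langle 4\rangle\}$. If $[a,b]$ has direction $\langle 0\rangle$, Proposition~\ref{dos lados eh maior} applies directly to any $\gamma \in \mathbf S$ joining $a$ and $b$ and yields the conclusion at once. So I assume $[a,b]$ has direction $\langle 2\rangle$ (the case $\langle 4\rangle$ is symmetric by reflection). Write $\gamma = [a_1, \ldots, a_n] \in \mathbf S$ with $a_1 = a$ and $a_n = b$, and induct on $n$. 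The base case $n = 2$ forces $\gamma = [a,b]$. In the inductive step, if every vertex of $\gamma$ lies on or below $l[a,b]$, Proposition~\ref{abaixo eh maior 1} yields the strict inequality immediately; otherwise $\gamma$ has at least one vertex strictly above $l[a,b]$, which is the crucial subcase.

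In this subcase my plan is to exhibit a sub-path $[a_i, a_{i+1}, \ldots, a_j]$ of $\gamma$ satisfying the hypotheses of Proposition~\ref{um menor do que tres 2} for some $k \in \{0,2,4\}$, and then replace that sub-path by the single segment $[a_i, a_j]$. By Proposition~\ref{um menor do que tres 2} the replacement strictly decreases the $\hat F$-length; after simplification as in Remark~\ref{simplied path}, the resulting $\gamma' \in \mathbf S$ joins $a$ and $b$ with strictly fewer vertices than $\gamma$, so the inductive hypothesis gives $\ell_{\hat F}([a,b]) \leq \ell_{\hat F}(\gamma') < \ell_{\hat F}(\gamma)$, with strictness preserved whenever $\gamma \neq [a,b]$.

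The main obstacle is exhibiting such a sub-path in every configuration. I would identify the topmost line $\ell^*$ parallel to $l[a,b]$ that contains a vertex of $\gamma$; by hypothesis $\ell^*$ lies strictly above $l[a,b]$. Let $a_p, a_{p+1}, \ldots, a_r$ be a maximal run of consecutive vertices of $\gamma$ on $\ell^*$; by maximality the neighbors $a_{p-1}$ and $a_{r+1}$ exist and lie strictly below $\ell^*$. Condition~(2) of Proposition~\ref{um menor do que tres 2} with $k=2$ holds for the sub-path $[a_{p-1}, a_p, \ldots, a_r, a_{r+1}]$ because $a_p, \ldots, a_r \in \ell^* = l_2(a_p)$, and a finite check against the admissible edge directions $\{0,2,4,6,8,10\}$ shows that in many configurations the vector $\overrightarrow{a_{p-1}a_{r+1}}$ is parallel to $l_k$ for some $k \in \{0,2,4\}$, providing condition~(1). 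In configurations where this fails, the plan is to first apply the same construction inductively to an inner sub-bump strictly between $a_{p-1}$ and $a_{r+1}$ (replacing $l[a,b]$ by a lower parallel line as the baseline), flattening sub-bumps before addressing the outer one, or to enlarge the index range $[p{-}1, r{+}1]$ until the endpoints do share a preferred-direction line. Ensuring that this reduction always terminates with a valid sub-path is the most delicate part of the argument, but it follows from the finiteness of $\gamma$ together with the combinatorics of the six preferred directions, after which the induction closes.
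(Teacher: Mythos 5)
Your overall architecture matches the paper's: handle the vertical direction via Proposition~\ref{dos lados eh maior}, peel off the topmost ``bumps'' of a competing path using Proposition~\ref{um menor do que tres 2}, and finish with Proposition~\ref{abaixo eh maior 1}. However, the step you yourself flag as ``the most delicate part'' is a genuine gap, and your proposed workarounds do not close it. The sub-path $[a_{p-1}, a_p, \ldots, a_r, a_{r+1}]$ flanking a maximal run on the topmost line $\ell^*$ generically fails hypothesis~(1) of Proposition~\ref{um menor do que tres 2}: the two neighbors $a_{p-1}$ and $a_{r+1}$ lie strictly below $\ell^*$ but on \emph{different} lines parallel to $\ell^*$, and nothing forces $\overrightarrow{a_{p-1}a_{r+1}}$ to be a preferred vector. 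Enlarging the index range does not help, since any enlargement pulls in vertices off $\ell^*$ and destroys hypothesis~(2), and ``flattening inner sub-bumps'' is vacuous here because the run on $\ell^*$ is already topmost. So the reduction as stated does not terminate in a configuration where the key proposition applies.

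The paper's fix is a small but essential device you are missing: it does not use the original neighbors as endpoints of the replaced sub-path. Instead, letting $\zeta_n = \ell^*$ be the topmost line and $\zeta_{n-1}$ the next line (parallel to the chosen preferred direction) carrying a vertex of $\gamma$ below it (or the baseline through $a$ if none), one intersects the two edges entering and leaving the run with $\zeta_{n-1}$, obtaining new points $d_{k_1} \in [c_{k_1-1}, c_{k_1}]$ and $d_{k_2} \in [c_{k_2}, c_{k_2+1}]$. These intersections exist precisely because no vertex lies strictly between $\zeta_{n-1}$ and $\zeta_n$, and by construction $d_{k_1}$ and $d_{k_2}$ lie on the \emph{same} preferred line $\zeta_{n-1}$, so all three hypotheses of Proposition~\ref{um menor do que tres 2} hold for $[d_{k_1}, c_{k_1}, \ldots, c_{k_2}, d_{k_2}]$ with no case analysis. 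Replacing that sub-path by $[d_{k_1}, d_{k_2}]$ strictly shortens $\gamma$, strictly reduces the number of vertices on $\zeta_n$, and (as Remark~\ref{remain in the convex hull} notes, which matters for the subsequent Proposition~\ref{mpqsminimiza}) keeps the new path inside the convex hull of the old one. With this modification your induction closes; without it, the argument stalls exactly where you said it would.
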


\begin{proof}

If $[a,b]$ has direction $\left< 0\right>$ (or $\left< 6\right>$), then Proposition \ref{dos lados eh maior} settles this proposition.

In order to fix ideas, suppose that $[a,b]$ has direction $\left< 4\right>$.
The other cases follow as an direct adaptation of this case.
 
Consider $\gamma_1=[a,c_1,\ldots,c_j,b] \in \mathbf S$.
$\{l_4(c_i),i=1,\ldots,j\}$ is a finite family of lines which are parallel to $v_4$.
Enumerate the lines strictly above $l_4(a)$ by $\zeta_1, \zeta_2, \ldots, \zeta_n$, with $\zeta_{i+1}$ placed strictly above $\zeta_i$ for $i=1,\ldots, n-1$.
If there aren't any lines $l_4(c_i)$ strictly above $l_4(a)$, then Proposition \ref{abaixo eh maior 1} settles this proposition.

The basic idea is replacing $\gamma_1$ by $\gamma_2 \in \mathbf S$ such that: 
\begin{itemize}
\item $\ell_{\hat F} (\gamma_2)<\ell_{\hat F} (\gamma_1)$; 
\item The vertices of $\gamma_2$, strictly above $l_4(a)$, are contained in $\zeta_1 \cup\ldots \cup \zeta_{n}$; 
\item $\gamma_2$ has strictly less vertices in $\zeta_n$ than $\gamma_1$.
\end{itemize}
We iterate this process until we end up with a path without points in $\zeta_n$.
After that, we continue iterating the same process on $\zeta_{n-1}, \ldots, \zeta_1$ until we get a path in $\mathbf S$ with no points in $\zeta_1\cup \ldots \cup \zeta_n$.
Finally this proposition is settled using Proposition \ref{abaixo eh maior 1}.
Let us go to the details.

We begin with $c_k \in \zeta_n$. 
Then there exist a sequence of points $c_{k_1},\ldots , c_k ,$ $ \ldots, c_{k_2}$ such that 
\begin{enumerate}
\item $k_1 \leq k \leq k_2$;
\item $\{ c_{k_1},\ldots , c_k , \ldots, c_{k_2} \} \subset \zeta_n$;
\item $c_{k_1-1},c_{k_2+1} \not\in \zeta_n$.
\end{enumerate}
Set $d_{k_1} = [c_{k_1 - 1},c_{k_1}] \cap \zeta_{n-1}$ and $d_{k_2} = [c_{k_2 + 1},c_{k_2}] \cap \zeta_{n-1}$.
These intersections are non-empty because there isn't any $c_i$ strictly between $\zeta_{n-1}$ and $\zeta_n$.
Then $d_{k_1}, d_{k_2} \in \zeta_{n-1}$, $c_{k_1}, c_{k_2} \in \zeta_k$ and we have that
\begin{equation}
\label{inequacao quadrilatero prova}
\ell_{\hat F} ([d_{k_1},d_{k_2}]) < \ell_{\hat F} ([d_{k_1}, c_{k_1}, c_{k_2}, d_{k_2}]) \leq \ell_{\hat F} ([d_{k_1}, c_{k_1}, \ldots, c_{k_2}, d_{k_2}])
\end{equation}
due to Proposition \ref{um menor do que tres 2}. If we define
\[
\gamma_2 = [a, c_1, \ldots, c_{k_1 - 1}, d_{k_1}, d_{k_2}, c_{k_2 + 1}, \ldots, c_j, b],
\]
we have that $\gamma_2$ satisfies the three conditions stated above because $\gamma_1$ is equivalent to
\[
[a, c_1, \ldots, c_{k_1 - 1}, d_{k_1}, c_{k_1}, \ldots, c_{k_2}, d_{k_2}, c_{k_2 + 1}, \ldots, c_j, b].
\]

As explained before, we can iterate this process until there is no more points strictly above $l_4(a)$ and Proposition \ref{abaixo eh maior 1} settles this proposition.
\end{proof}

\begin{remark}
\label{remain in the convex hull}
In Lemma \ref{segmentos minimizantes}, when we replace $\gamma_1$ by $\gamma_2$, observe that $\gamma_2$ is contained in the $E$-convex hull of $\gamma_1$.
In fact, the points $d_{k_1}$ and $d_{k_2}$, as well as $[d_{k_1}, d_{k_2}]$, are in the $E$-convex hull of $\gamma_1$.
\end{remark}

\begin{proposition}
\label{mpqsminimiza}
Let $p,q \in \hat M$. Then $[p,q]_{\text{min}}$ is the unique $\mathbf S$-minimizing path.
\end{proposition}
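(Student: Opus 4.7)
The plan splits naturally along two cases. If $[p,q] \in \mathbf S$, then $[p,q]_{\text{min}} = [p,q]$ and Lemma \ref{segmentos minimizantes} immediately yields both that $[p,q]$ is $\mathbf S$-minimizing and that it is the unique such path. So I focus on the case $[p,q] \notin \mathbf S$, where $[p,q]_{\text{min}} = [p,a,q]$ with $a = l_i(p) \cap l_j(q)$ the apex. Let $L_1$ and $L_2$ denote the preferred lines containing $[p,a]$ and $[a,q]$ respectively; the closed concave side of $[p,a,q]$ is the wedge at $a$ bounded by the rays $h[a,p]$ and $h[a,q]$, and an elementary angle check shows that this wedge is exactly the intersection of a specific closed half-plane bounded by $L_1$ and a specific closed half-plane bounded by $L_2$.

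Given an arbitrary $\gamma \in \mathbf S$ connecting $p$ and $q$, the goal is to build a modified path $\gamma^* \in \mathbf S$ from $p$ to $q$ that lies entirely in the closed concave side of $[p,a,q]$ and satisfies $\ell_{\hat F}(\gamma^*) \le \ell_{\hat F}(\gamma)$, with strict inequality whenever any modification is performed. Once $\gamma^*$ is constructed, Proposition \ref{abaixo eh maior} gives $\ell_{\hat F}([p,a,q]) \le \ell_{\hat F}(\gamma^*)$ with equality iff $\gamma^* = [p,a,q]$; concatenating this with the previous inequality settles both the minimizing property and uniqueness at once.

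The construction of $\gamma^*$ follows the iterative pushdown template from the proof of Lemma \ref{segmentos minimizantes}, executed in two stages: first across $L_1$, then across $L_2$. In Stage 1, I collect the finitely many lines parallel to $L_1$ that pass through vertices of $\gamma$ lying on the side of $L_1$ opposite the concave side, order them by distance from $L_1$, and iteratively apply Proposition \ref{um menor do que tres 2} to the maximal blocks of consecutive vertices on the outermost such line, replacing each block by a straight segment on the next parallel inward, which is strictly shorter. Because the direction of $L_1$ is one of the preferred directions $\{0,2,4\}$, the hypotheses of Proposition \ref{um menor do que tres 2} are exactly met. Stage 2 is identical with $L_1$ replaced by $L_2$.

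The main technical point is to ensure that the two stages do not interfere: Stage 2 must not push any vertex back across $L_1$, and the final path must actually lie in the concave wedge rather than merely on the correct side of each line individually. The noninterference is delivered by Remark \ref{remain in the convex hull}, which confines each replacement to the Euclidean convex hull of the preceding path, so Stage 2 pushdowns never cross back over $L_1$; and since the concave wedge is precisely the intersection of the two correct half-planes, landing on the correct side of each line guarantees landing inside the wedge. A small amount of case analysis, symmetric across the six possible direction sequences $\langle 0,2\rangle$, $\langle 2,4\rangle$, $\langle 4,6\rangle$ and their reverses for $[p,a,q]$, then closes the argument.
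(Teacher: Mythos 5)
Your proposal is correct and follows essentially the same route as the paper's own proof: reduce to Lemma \ref{segmentos minimizantes} when $[p,q]\in\mathbf S$, and otherwise perform the two-stage iterative pushdown (first across the preferred line through $[p,a]$, then across the one through $[a,q]$) via Proposition \ref{um menor do que tres 2}, using Remark \ref{remain in the convex hull} to guarantee the second stage does not undo the first, and concluding with Proposition \ref{abaixo eh maior}. The only cosmetic difference is that you make explicit the observation that the concave wedge is the intersection of the two relevant closed half-planes, which the paper leaves implicit.
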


\begin{proof}

We can suppose that $[p,q]$ isn't a preferred path due to Lemma \ref{segmentos minimizantes}. 
Let $\gamma_1 = [p=c_1, c_2, \ldots, c_{n-1}, q=c_n] \in \mathbf S$.
In order to fix ideas, we suppose that $q \in S_3(p)$.
The other cases are similar: The general idea is to replace $\gamma_1$ by a shorter curve inside the concave side of $[p,q]_{\text{min}}$ and Proposition \ref{abaixo eh maior} settles this proposition.

First of all we proceed as in Lemma \ref{segmentos minimizantes}:
If $\gamma_1$ contain points strictly above $l_2(p)$, after some iterations, we can replace $\gamma_1$ by $\gamma_{n_1} \in \mathbf S$ such that 
\begin{itemize}
\item $\ell_{\hat F}(\gamma_{n_1}) < \ell_{\hat F} (\gamma_1)$;
\item All points of $\gamma_{n_1}$ are contained in the lower closed half-plane bounded by $l_2(p)$. 
\end{itemize}
Now we consider $\gamma_{n_1}$. If it has points strictly above $l_4(q)$, we proceed as in Lemma \ref{segmentos minimizantes}, and after some iterations we get a path $\gamma_{n_2} \in \mathbf S$ such that 
\begin{itemize}
\item $\ell_{\hat F}(\gamma_{n_2}) < \ell_{\hat F} (\gamma_{n_1})$;
\item All points of $\gamma_{n_2}$ are contained in the lower closed half-plane bounded by $l_4(q)$. 
\end{itemize}
Observe that $\gamma_{n_2}$ is also contained in the lower closed half-plane bounded by $l_2(p)$ due to Remark \ref{remain in the convex hull}.
Therefore $\gamma_{n_2}$ lies in the concave side of $[p,q]_{\text{min}}$ and Proposition \ref{abaixo eh maior} states that \[
\ell_{\hat F} ([p,q]_{\text{min}}) \leq \ell_{\hat F} (\gamma_{n_2}),
\]
and the equality holds iff $\gamma_{n_2} = [p,q]_{\text{min}}$.
Thus
\[
\ell_{\hat F} ([p,q]_{\text{min}}) \leq \ell_{\hat F} (\gamma_1),
\] 
and the equality holds iff $\gamma_1 = [p,q]_{\text{min}}$.
\end{proof}

\section{Length comparison between preferred paths and piecewise smooth paths}
\label{smooth and preferred}

In this section we prove that if $\gamma$ is a piecewise smooth path connecting $p$ and $q$, then 
\[
\ell_{\hat F}([p,q]_{\text{min}}) \leq \ell_{\hat F}(\gamma),
\]
and the equality holds iff $\gamma = [p,q]_{\text{min}}$ (See Theorem \ref{principal}).
We prove some preliminary results before.

\begin{lemma}
\label{finsler flat}
\[
\ell_{F_0} ([p,q]_{\text{\em min}})=\ell_{F_0}([p,q])=F_0(q-p).
\]
\end{lemma}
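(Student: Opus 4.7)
The proof naturally splits into two parts. The equality $\ell_{F_0}([p,q]) = F_0(q-p)$ is immediate from the definition of $F_0$-length: parameterizing the segment as $\gamma(t) = (1-t)p + tq$ for $t \in [0,1]$, the integrand $F_0(\gamma^\prime(t)) = F_0(q-p)$ is constant, and the integral is $F_0(q-p)$. So the real task is to show $\ell_{F_0}([p,q]_{\text{min}}) = F_0(q-p)$.

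If $[p,q] \in \mathbf S$ then $[p,q]_{\text{min}} = [p,q]$ by definition, and the equality is trivial. Otherwise, $q - p$ lies in a unique open sector $S_k(p)$, and $[p,q]_{\text{min}} = [p,a,q]$, where by construction the vectors $a - p$ and $q - a$ are positive multiples of the two preferred unit vectors $u$ and $v$ bounding $S_k$. The plan is to invoke Proposition \ref{mfo}: by item (1) the Euclidean segment $[p,q]$ is a minimizing path of $(\hat M, F_0)$ with length $F_0(q-p)$, and by items (3)--(4) the broken path $[p,a,q]$ is also a minimizing path of $(\hat M, F_0)$, because after the appropriate rotation its two pieces have slopes $\pm \sqrt{3}/3$, which are the extreme slopes allowed in the Lipschitz description of the minimizers. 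Both paths therefore have $F_0$-length equal to $d_{F_0}(p,q) = F_0(q-p)$, and the result follows.

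An alternative self-contained derivation, which I would mention in the write-up, avoids Proposition \ref{mfo} altogether: the $F_0$-unit sphere is a regular hexagon, so the arc of the unit sphere inside the closed sector $\bar S_k$ is the straight line segment joining the two adjacent preferred unit vectors $u$ and $v$. This yields the additivity identity $F_0(\alpha u + \beta v) = \alpha + \beta$ for all $\alpha,\beta \geq 0$, since $(\alpha u + \beta v)/(\alpha+\beta)$ is a convex combination of $u$ and $v$ and thus lies on the hexagon boundary. Writing $a - p = \alpha u$ and $q - a = \beta v$ and using $(a-p)+(q-a) = q-p$ gives $F_0(q-p) = F_0(a-p) + F_0(q-a) = \ell_{F_0}([p,a,q])$, as required. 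There is no real obstacle here; the lemma reduces to the fact that $[p,q]_{\text{min}}$ is defined precisely so that its two legs align with the edge of the hexagon subtending the sector of $q-p$, which is exactly the configuration in which the norm is additive.
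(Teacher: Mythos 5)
Your proof is correct and is essentially the argument the paper intends: the paper's own proof simply declares the first equality ``straightforward from the definition of $F_0$ and $[p,q]_{\text{min}}$'' (your hexagon-edge additivity $F_0(\alpha u+\beta v)=\alpha+\beta$ is exactly the content of that claim) and attributes the second to Proposition \ref{mfo}, just as you do. You have merely supplied the details the paper leaves implicit, and both of your routes are valid.
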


\begin{proof}

The first equality is straightforward from the definition of $F_0$ and $[p,q]_{\text{min}}$. 
The second equality is due to Proposition \ref{mfo}.
\end{proof}

\begin{lemma}
\label{limite fundamental}
Let $\gamma:[0,1] \rightarrow \hat M$ be a smooth curve. 
Then 
\begin{equation}
\label{eq limite fundamental}
\lim_{t \rightarrow t_0} \frac{\ell_{\hat F}( [\gamma(t),\gamma(t_0)]_{\text{\em min}})}{\vert t - t_0 \vert}= \hat F(\gamma^\prime(t_0)).
\end{equation}
\end{lemma}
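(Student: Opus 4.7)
The plan is to exploit the factorization $\hat F = f\cdot F_0$: because $[p,q]_{\text{min}}$ consists of at most two Euclidean line segments whose directions are preferred, Lemma \ref{finsler flat} pins down the $F_0$-length exactly, and the only source of difficulty will be the variation of the continuous multiplier $f$ along the (shrinking) trace of $\sigma_t := [\gamma(t),\gamma(t_0)]_{\text{min}}$. Parameterizing $\sigma_t$ piecewise linearly, I would first sandwich the $\hat F$-length by the extrema of $f$ on the trace,
\[
\Bigl(\min_{x \in \sigma_t} f(x)\Bigr)\, \ell_{F_0}(\sigma_t) \leq \ell_{\hat F}(\sigma_t) \leq \Bigl(\max_{x \in \sigma_t} f(x)\Bigr)\, \ell_{F_0}(\sigma_t),
\]
and then apply Lemma \ref{finsler flat} to write $\ell_{F_0}(\sigma_t) = F_0(\gamma(t)-\gamma(t_0))$. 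Smoothness of $\gamma$, continuity of $F_0$, and positive homogeneity yield
\[
\lim_{t\to t_0} \frac{F_0(\gamma(t)-\gamma(t_0))}{|t-t_0|} = F_0(\gamma'(t_0)).
\]

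The step that actually uses the geometry of $[p,q]_{\text{min}}$ is the claim that the trace of $\sigma_t$ collapses to the point $\gamma(t_0)$ as $t\to t_0$. When $\sigma_t = [\gamma(t),a,\gamma(t_0)]$ has an intermediate vertex, the Euclidean triangle $\triangle\gamma(t)\,a\,\gamma(t_0)$ has angle $\pi/3$ at $a$, since its two legs are parallel to preferred directions differing by an integer multiple of $\pi/3$. The law of sines then forces
\[
\ell_E([\gamma(t),a]),\ \ell_E([a,\gamma(t_0)]) \leq \frac{2}{\sqrt 3}\, \ell_E([\gamma(t),\gamma(t_0)]),
\]
so the Euclidean diameter of $\sigma_t$ is bounded by a universal constant times $\ell_E([\gamma(t),\gamma(t_0)])$, which tends to $0$. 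Continuity of $f$ at $\gamma(t_0)$ now forces $\min_{\sigma_t} f$ and $\max_{\sigma_t} f$ to both converge to $f(\gamma(t_0))$.

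Multiplying the two limits and using the sandwich bound gives
\[
\lim_{t\to t_0}\frac{\ell_{\hat F}(\sigma_t)}{|t-t_0|} = f(\gamma(t_0))\, F_0(\gamma'(t_0)) = \hat F(\gamma(t_0),\gamma'(t_0)),
\]
which is the desired equality; the degenerate case $\gamma'(t_0)=0$ is handled identically, both sides being zero. The only nonroutine ingredient is the law-of-sines estimate controlling the position of the intermediate vertex $a$, and even this is mild because the angle $\pi/3$ at $a$ is a universal constant depending only on the geometry of the hexagonal indicatrix; the rest is a continuity/homogeneity argument.
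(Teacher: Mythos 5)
Your proof is correct and follows essentially the same route as the paper's: factor $\hat F = f\,F_0$, compute the $F_0$-length of $[\gamma(t),\gamma(t_0)]_{\text{min}}$ exactly via Lemma \ref{finsler flat} together with homogeneity of $F_0$, and sandwich the $\hat F$-length using continuity of $f$ near $\gamma(t_0)$. Your explicit law-of-sines control of the intermediate vertex (showing the whole trace of $\sigma_t$ collapses to $\gamma(t_0)$) is a detail the paper leaves implicit; note only that the interior angle at $a$ is $2\pi/3$ rather than $\pi/3$, although the resulting constant $2/\sqrt 3$ is unaffected.
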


\begin{proof}
Notice that

\[
\lim_{t \rightarrow t_0}\frac{\ell_{F_0}([\gamma(t),\gamma(t_0)]_{\text{min}})}{\vert t-t_0\vert}
= \lim_{t \rightarrow t_0}\frac{\ell_{F_0}([\gamma(t),\gamma(t_0)])}{\vert t-t_0\vert}
=  \lim_{t \rightarrow t_0} F_0 \left(\frac{\gamma(t) -\gamma(t_0)}{ t-t_0} \right)
\]
\begin{equation}
\label{F_0}
= F_0(\gamma^\prime (t_0))
\end{equation}
due to Lemma \ref{finsler flat}.

Given an arbitrarily small $\delta >0$, there exist a small neighborhood $I$ of $t_0$ such that $(1 - \delta)f(\gamma(t_0)) \leq f(\gamma(t)) \leq (1 + \delta)f(\gamma(t_0))$ for every $t \in I$. 
Then
\[
(1-\delta)f(\gamma(t_0))\frac{\ell_{F_0} ([\gamma(t),\gamma (t_0)]_{\text{min}})}{\vert t-t_0\vert}
\leq  \frac{\ell_{\hat F}([\gamma(t),\gamma (t_0)]_{\text{min}})}{\vert t- t_0\vert} 
\]
\begin{equation}
\label{sanduiche}
\leq(1+\delta)f(\gamma(t_0))\frac{\ell_{F_0} ([\gamma(t),\gamma (t_0)]_{\text{min}})}{\vert t- t_0\vert}
\end{equation}
for every $t \in I-\{t_0\}$.
Now combining (\ref{F_0}) and (\ref{sanduiche}) we get (\ref{eq limite fundamental}).
\end{proof}

\begin{remark}
After the proof that $[p,q]_{\text{\em min}}$ are minimizing paths (see Theorem \ref{principal}), it will follow that Lemma \ref{limite fundamental} is a particular case of the following theorem:
\end{remark}

\begin{theorem}
\label{velocidade Finsler}
Let $(M,F)$ be a $C^0$-Finsler manifold and $\gamma:(-\varepsilon, \varepsilon )\rightarrow M$ be a smooth path such that $\gamma(0)=p$ and $\gamma^\prime(0)=v$. Then 
\[
\lim_{t\rightarrow 0}\frac{d_F(\gamma(t),p)}{\vert t \vert}=F(v).
\]
\end{theorem}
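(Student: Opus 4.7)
The plan is to establish the limit by proving the two one-sided inequalities $\limsup_{t\to 0}\frac{d_F(\gamma(t),p)}{|t|}\leq F(v)$ and $\liminf_{t\to 0}\frac{d_F(\gamma(t),p)}{|t|}\geq F(v)$ separately. I would work in a coordinate chart around $p$, identifying a neighborhood $U$ of $p$ with an open set of $\mathbb{R}^n$ sending $p$ to $0$, and using natural coordinates on $TU$ to view $F$ as a continuous map on $U\times\mathbb{R}^n$ whose restriction to each $\{x\}\times\mathbb{R}^n$ is a norm.

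For the upper bound, observe that $d_F(\gamma(t),p)\leq\ell_F\bigl(\gamma|_{[\min(0,t),\max(0,t)]}\bigr)$, so it suffices to show that this length divided by $|t|$ tends to $F(v)$. Writing the length as the integral of the continuous map $s\mapsto F(\gamma(s),\gamma'(s))$, the mean value theorem for integrals together with continuity of $F$ at $(\gamma(0),\gamma'(0))=(p,v)$ gives the desired convergence to $F(v)$.

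For the lower bound, the key is to compare $F$ near $p$ with the Minkowski norm $F_p(y):=F(p,y)$ on $\mathbb{R}^n$. By continuity of $F$ on the compact set $\{p\}\times\{y:F_p(y)=1\}$, for any $\varepsilon>0$ there is a neighborhood $V$ of $p$ such that $F(x,y)\geq(1-\varepsilon)F_p(y)$ for every $x\in V$ and every $y\in\mathbb{R}^n$ (using positive homogeneity to extend from the unit sphere). Then for any piecewise smooth path $\sigma:[a,b]\to V$ from $p$ to $\gamma(t)$,
\[
\ell_F(\sigma)\geq(1-\varepsilon)\int_a^b F_p(\sigma'(s))\,ds\geq(1-\varepsilon)F_p\bigl(\sigma(b)-\sigma(a)\bigr)=(1-\varepsilon)F_p(\gamma(t)-p),
\]
where the middle inequality is the integral form of the triangle inequality for the norm $F_p$. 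Dividing by $|t|$ and using positive homogeneity,
\[
\frac{\ell_F(\sigma)}{|t|}\geq(1-\varepsilon)F_p\!\left(\frac{\gamma(t)-p}{|t|}\right)\longrightarrow(1-\varepsilon)F_p(\pm v)=(1-\varepsilon)F(v),
\]
using symmetry of the norm. Since $\varepsilon$ is arbitrary, taking the infimum over admissible $\sigma$ and letting $t\to 0$ yields the lower bound.

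The main obstacle is that the infimum defining $d_F(\gamma(t),p)$ is taken over \emph{all} piecewise smooth paths, not just those confined to $V$, so the estimate above is not immediately applicable. To handle this I would first establish a local bi-Lipschitz equivalence between $d_F$ and the coordinate Euclidean metric in the spirit of Remark \ref{equivalent metrics}: by continuity and compactness, there exist constants $c,C>0$ and a compact neighborhood $K\subset V$ of $p$ such that $c\,\|y\|_E\leq F(x,y)\leq C\,\|y\|_E$ for every $x\in K$ and $y\in\mathbb{R}^n$. A path starting at $p$ with $F$-length less than $c\cdot\mathrm{dist}_E(p,\partial K)$ cannot exit $K$, so for $|t|$ small enough any competitor $\sigma$ with $\ell_F(\sigma)\leq 2d_F(\gamma(t),p)$ stays inside $V$. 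This reduction makes the lower bound argument applicable and completes the proof.
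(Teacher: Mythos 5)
Your proposal is correct. Note, however, that the paper does not actually prove this statement: its ``proof'' consists of the single line ``See Theorem 3.7 of \cite{BenettiFukuoka}'', so there is no internal argument to compare yours against. What you have written is the standard self-contained proof: the upper bound by evaluating $\ell_F(\gamma|_{[0,t]})/|t|$ via continuity of $s\mapsto F(\gamma(s),\gamma'(s))$, and the lower bound by freezing the norm at $p$, using compactness of the $F_p$-unit sphere to get $F(x,\cdot)\geq(1-\varepsilon)F_p(\cdot)$ on a neighborhood $V$, and the integral triangle inequality $\int_a^b F_p(\sigma')\,ds\geq F_p(\sigma(b)-\sigma(a))$. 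You also correctly identified and closed the one real gap in the naive version of this argument, namely that competitors for $d_F(\gamma(t),p)$ need not stay in $V$; the local two-sided Lipschitz comparison with the Euclidean metric (continuity of $F$ on the compact set $K\times S^{n-1}_E$) forces any near-minimizer to remain in $K\subset V$ once $|t|$ is small, since exiting $K$ already costs $F$-length at least $c\,\mathrm{dist}_E(p,\partial K)$, while $d_F(\gamma(t),p)=O(|t|)$ by the upper bound. The only cosmetic points worth tightening in a final write-up are the degenerate cases: when $v=0$ the conclusion follows from the upper bound alone, and the restriction to competitors with $\ell_F(\sigma)\leq 2\,d_F(\gamma(t),p)$ should be replaced by $\ell_F(\sigma)\leq d_F(\gamma(t),p)+|t|\varepsilon$, say, to avoid any issue before positivity of $d_F(\gamma(t),p)$ has been established. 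Neither affects the validity of the argument.
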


\begin{proof}

See Theorem 3.7 of \cite{BenettiFukuoka}.

\end{proof}

\begin{lemma}
\label{compara suave local}
Let $\gamma:[0,1] \rightarrow \hat M$ be a smooth path and $t_0 \in [0,1]$ a point such that $\gamma^\prime(t_0)$ isn't a preferred vector. 
Then there exist a neighborhood $I$ of $t_0$ such that 
\[
\ell_{\hat F} (\gamma\vert_{[t_0,t]}) > \ell_{\hat F}([\gamma(t),\gamma(t_0)]_{\text{\em min}})
\] 
for every $t\in I-\{t_0\}$.
\end{lemma}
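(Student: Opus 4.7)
The plan is to parametrize both $\gamma|_{[t_0, t]}$ and $[\gamma(t), \gamma(t_0)]_{\min}$ by their common $F_0$-arc length and compare them pointwise. Since $\gamma'(t_0)$ is nonzero (the zero vector is preferred by convention) and not preferred, it lies in the interior $\text{int}\, S_k$ of a unique sector $S_k$ with $k \in \{1, 3, 5, 7, 9, 11\}$. By continuity of $\gamma'$ I shrink to a neighborhood $I$ of $t_0$ on which $\gamma'(s) \in \text{int}\, S_k$ for every $s \in I$; for $t \in I \setminus \{t_0\}$ the displacement $\gamma(t) - \gamma(t_0)$ then also lies in $\text{int}\, S_k$. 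I treat $t > t_0$; the case $t < t_0$ is analogous after reversing parametrization.

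Set $p = \gamma(t_0)$ and $q = \gamma(t)$, and write $\mu := [p, q]_{\min} = [p, a, q]$. Because $F_0$ restricted to $\bar S_k$ coincides with a single linear functional $L_k$, both paths $\gamma|_{[t_0,t]}$ and $\mu$ have $F_0$-length equal to $T := F_0(q - p)$, and when parametrized by $F_0$-arc length $s \in [0, T]$ both lie on the same level line $\{x : L_k(x - p) = s\}$ at parameter $s$. In this parametrization a change of variables gives $\ell_{\hat F}(\eta) = \int_0^T f(\eta(s))\, ds$ for each of the two paths, so the desired inequality reduces to showing
\[
f(\mu(s)) < f(\gamma(s)) \text{ for every } s \in (0, T).
\]

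The geometric heart of the proof is the claim that on each level line, $\mu(s) - \gamma(s)$ is a strictly positive multiple of one of the three preferred vectors $\vec v_0$, $\vec v_2$, $\vec v_{10}$, with which of them depending on $k$. Granting this, since each of these vectors makes Euclidean angle at most $\pi/3 < \hat\theta$ with $\vec v_0$, hypothesis (\ref{limite inferior deformacao f}) yields the required strict pointwise inequality, and integration finishes the proof. To establish the claim I compare Cartesian coordinate rates in the $F_0$-arc length parametrization: along each of its two segments, $\mu$ moves in one of the preferred directions bounding $S_k$, so its coordinate rates take the extremal values among $\bar S_k$-tangent vectors of unit $F_0$-length, while $\gamma'(s)$, lying in $\text{int}\, S_k$, produces strictly intermediate rates. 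Integrating these rate comparisons on the two sub-intervals $[0, \alpha']$ and $[\alpha', T]$ (where $\alpha'$ is the $F_0$-length of $[p, a]$) shows that $\gamma(s)$ sits strictly on the correct side of $\mu(s)$ along the level line throughout $(0, T)$.

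The main obstacle is bookkeeping across the sectors: the level lines are vertical for $k \in \{3, 9\}$ and have slope $\pm 1/\sqrt{3}$ for $k \in \{1, 5, 7, 11\}$, so identifying the ``$\gamma$-to-$\mu$'' direction requires separate inspection in each case. The reflection $x^1 \mapsto -x^1$ leaves the hypothesis on $f$ invariant and interchanges $S_1 \leftrightarrow S_{11}$, $S_3 \leftrightarrow S_9$, $S_5 \leftrightarrow S_7$, so only three essentially distinct cases need detailed verification; in each, the resulting direction turns out to be one of $\vec v_0$, $\vec v_2$, $\vec v_{10}$ as claimed.
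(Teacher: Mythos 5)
Your proof is correct, and it takes a genuinely different route from the paper's. The paper argues indirectly: it splits $[\gamma(t_0),\gamma(s_0)]_{\text{min}}$ at the image of the midpoint $u_0$, uses the parallelogram comparison (Corollary \ref{paralelogramo}) to extract a definite gap $5\varepsilon$ between $\beta$ and the concatenation $\eta$ of the two half min-paths, and then runs a Riemann-sum/partition argument (via Lemma \ref{limite fundamental} and the $\mathbf S$-minimality of Proposition \ref{mpqsminimiza}) to show that $\sum_i \ell_{\hat F}([\gamma(\tau_i),\gamma(\tau_{i-1})]_{\text{min}})$ both dominates $\ell_{\hat F}(\eta)$ and approximates $\int \hat F(\gamma')\,dt$, absorbing the errors into the gap. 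You instead exploit the linearity of $F_0$ on $\bar S_k$ to parametrize both curves by common $F_0$-arc length, place $\gamma(s)$ and $\mu(s)$ on the same level line of $L_k$, and reduce everything to the pointwise inequality $f(\mu(s))<f(\gamma(s))$, which follows from (\ref{limite inferior deformacao f}) once one checks that $\mu(s)-\gamma(s)$ is a positive multiple of $\vec v_0$, $\vec v_2$ or $\vec v_{10}$ (all at angle at most $\pi/3<\hat\theta$ from $\vec v_0$); I verified your rate-comparison claim in the representative sectors $S_1$, $S_3$, $S_5$ and it holds, with the strictness on $(0,T)$ coming from integrating a strict rate inequality forward from $0$ and backward from $T$. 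Your argument is more elementary and self-contained — it bypasses the limit lemma, the partition bookkeeping and Proposition \ref{mpqsminimiza} entirely — at the cost of the sector-by-sector case analysis; the paper's approach is less dependent on the explicit linear structure of $F_0$ on sectors and reuses machinery needed elsewhere (e.g.\ in Proposition \ref{suave generico nao minimiza}). One small point of care: your convention that the zero vector counts as preferred (so that $\gamma'(t_0)\neq 0$) is consistent with how the paper uses the lemma in Proposition \ref{suave generico nao minimiza}, and is needed, since the conclusion can fail for a curve stationary near $t_0$.
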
 

\begin{proof}

In order to fix ideas, suppose that $\gamma^\prime(t_0) \in S_3$.
The other cases are analogous. 
Let $I$ a neighborhood of $t_0$ such that $\gamma^\prime(t) \in S_3$ for every $t \in I$.
Consider $s_0 \in I$ and, in order to fix ideas, suppose that $s_0>t_0$ (if $s_0< t_0$, the analysis is analogous).
Observe that 
\[
\beta := [\gamma(t_0),\gamma (s_0)]_{\text{min}} = [\gamma(t_0),a,\gamma(s_0)]
\] 
is strictly above $\gamma\vert_{[t_0,s_0]}$ except at endpoints.
Let $u_0=(t_0 + s_0)/2$ and
consider 
\[
\eta := [\gamma(t_0),\gamma(u_0)]_{\text{min}} \cup [\gamma(u_0),\gamma(s_0)]_{\text{min}} = [\gamma(t_0),b,\gamma(u_0), c, \gamma(s_0)].
\]
Then $\eta$ is also strictly above $\gamma\vert_{[t_0,s_0]}$ except at $\gamma(t_0), \gamma(u_0)$ and $\gamma(s_0)$.
$\beta$ can be written as $[\gamma(t_0), b, a, c, \gamma(s_0)]$ and the difference between $\beta$ and $\eta$ is the parallelogram
$[b, a, c,\gamma(u_0),b]$ with directions $\left< 2, 4, 8, 10 \right>$,
with upper sides
$[c,a,b] \subset \beta$ and lower sides $[b,\gamma(u_0),c]\subset \eta$.
Therefore
\[
\ell(\beta) < \ell(\eta)
\]
due to Corollary \ref{paralelogramo}.
Set 
\[
5\varepsilon = \ell(\eta) - \ell(\beta).
\]

As a consequence of the uniform continuity of $t \mapsto \hat F(\gamma^\prime(t))$, there exist a $\delta >0$ such that
\begin{equation}
\label{aproximaFgammalinha1}
\left\vert \int_{t_0}^{u_0} \hat F(\gamma^\prime(t))dt - \sum_{i=1}^{n_{\tilde{\mathcal P}}} \hat F(\gamma^\prime (\bar \tau_i))(\tilde \tau_i- \tilde \tau_{i-1}) \right\vert < \varepsilon
\end{equation}
and
\begin{equation}
\label{aproximaFgammalinha2}
\left\vert \int_{u_0}^{s_0} \hat F(\gamma^\prime(t))dt - \sum_{j=1}^{n_{\tilde {\mathcal Q}}} \hat F(\gamma^\prime (\bar \nu_j))(\tilde \nu_j- \tilde \nu_{j-1}) \right\vert < \varepsilon
\end{equation}
for every pair of partitions $\tilde{\mathcal P}=\{t_0 = \tilde \tau_0 < \tilde \tau_1 < \ldots <  \tilde \tau_{n_{\tilde{\mathcal P}}} = u_0 \}$ and $\tilde{\mathcal Q} = \{u_0 = \tilde \nu_0 <  \tilde \nu_1 < \ldots <  \tilde \nu_{n_{\tilde{\mathcal Q}}}= s_0 \}$ with norms less than $\delta$ and every choice of $\bar \tau_i\in [\tilde \tau_{i-1},\tilde \tau_i]$ and $\bar \nu_i\in [\tilde \nu_{i-1},\tilde \nu_i]$.

We claim that we can choose two partitions $\mathcal P=  \{t_0 = \tau_0 < \tau_1 < \ldots < \tau_{n_{\mathcal P}} = u_0\}$ and $\mathcal Q = \{u_0 = \nu_0 < \nu_1 < \ldots < \nu_{n_{\mathcal Q}} = s_0 \}$ with norms less than $\delta$ and points $\hat \tau_i \in [\tau_{i-1}, \tau_i]$ and $\hat \nu_i \in [\nu_{i-1}, \nu_i]$ for every $i$ such that
\begin{equation}
\label{aproxima lm1}
\left\vert \sum_{i=1}^{n_{\mathcal P}} \ell_{\hat F}([\gamma(\tau_i),\gamma(\tau_{i-1})]_{\text{min}}) - \sum_{i=1}^{n_{\mathcal P}} \hat F(\gamma^\prime (\hat \tau_i))(\tau_i-\tau_{i-1}) \right\vert < \varepsilon
\end{equation}
and
\begin{equation}
\label{aproxima lm2}
\left\vert \sum_{i=1}^{n_{\mathcal Q}} \ell_{\hat F}([\gamma(\nu_i),\gamma(\nu_{i-1})]_{\text{min}}) - \sum_{i=1}^{n_{\mathcal Q}} \hat F(\gamma^\prime (\hat \nu_i))(\nu_i-\nu_{i-1}) \right\vert < \varepsilon.
\end{equation}
Let us prove (\ref{aproxima lm1}).
For each $\tau \in [t_0, u_0]$, we choose an neighborhood $I_\tau = [t_0, u_0] \cap (\tau -\delta_\tau, \tau + \delta_\tau)$ with $\delta_\tau \leq \delta$ such that
\[
\left\vert \frac{\ell_{\hat F}([\gamma(t),\gamma(\tau)]_{\text{min}})}{\vert t - \tau \vert}- \hat F (\gamma^\prime(\tau)) \right\vert < \frac{\varepsilon}{u_0 - t_0}
\] 
holds for every $t \in I_\tau \backslash \{\tau\}$ (see Lemma \ref{limite fundamental}). 
If $\tau \in (t_0, u_0)$, then we can choose $I_\tau \subset (t_0, u_0)$.
From $\{I_\tau\}_{\tau \in [t_0, u_0]}$, we can choose a finite subcover $\{I_{\tau_k}\}_{k = 0, \ldots, m}$ of $[t_0, u_0]$.
From $\{I_{\tau_k}\}_{k = 0, \ldots, m}$, we can choose a subcover such that $I_{\tau_{k_1}} \not \subset I_{\tau_{k_2}}$ for every $k_1 \neq k_2$.
This latter subcover can be reindexed as \[
\{[\tau_0, \tau_0 + \delta_{\tau_0}),(\tau_2 - \delta_{\tau_2},\tau_2 + \delta_{\tau_2}),(\tau_4 - \delta_{\tau_4}, \tau_4 + \delta_{\tau_4}), \ldots,(u_0-\delta_{u_0},u_0]\},
\] 
with $\tau_{i-2} < \tau_i$ for every $i$.
It is not difficult to see that $\tau_0 < \tau_2-\delta_{\tau_2} < \tau_4 - \delta_{\tau_4} < \ldots < u_0 - \delta_{u_0}$  and $\tau_0 + \delta_{\tau_0} < \tau_2 + \delta_{\tau_2} < \tau_4 + \delta_{\tau_4} < \ldots < u_0$ due to the construction of this latter subcover of $[t_0, u_0]$. 

Now we choose the points $\tau_k$, with ``odd'' $k$. 
$\tau_1 \in (\tau_0,\tau_2)$ is chosen in $[\tau_0,\tau_0 + \delta_{\tau_0}) \cap (\tau_2 - \delta_{\tau_2}, \tau_2 + \delta_{\tau_2})$. 
$\tau_3 \in (\tau_2,\tau_4)$ is chosen in $ (\tau_2 - \delta_{\tau_2},\tau_2 + \delta_{\tau_2}) \cap (\tau_4 - \delta_{\tau_4}, \tau_4 + \delta_{\tau_4}) $ and so on.

Finally we choose $\hat \tau_0= \tau_0, \hat \tau_1=\hat \tau_2 = \tau_2$, $\hat \tau_3 = \hat \tau_4 = \tau_4$, $\ldots$, $\hat \tau_{2i-1} =\hat \tau_{2i} = \tau_{2i}, \ldots$. It is straightforward that (\ref{aproxima lm1}) is satisfied. 

Equation (\ref{aproxima lm2}) is proved analogously.
Therefore
\[
\ell_{\hat F}([\gamma(t_0),\gamma(s_0)]_{\text{min}}) 
= \ell_{\hat F} ([\gamma(t_0), \gamma(u_0)]_{\text{min}}) 
+ \ell_{\hat F} ([\gamma(u_0), \gamma(s_0)]_{\text{min}})-5\varepsilon
\]
\[
\leq \sum_{i=1}^{n_{\mathcal P}} \ell_{\hat F}([\gamma(\tau_i),\gamma(\tau_{i-1})]_{\text{min}}) 
+ \sum_{i=1}^{n_{\mathcal Q}} \ell_{\hat F} ([\gamma(\nu_i),\gamma(\nu_{i-1})]_{\text{min}}) - 5\varepsilon
\]
\[
< \int_{t_0}^{u_0} \hat F(\gamma^\prime(t))dt + \int_{u_0}^{s_0} \hat F(\gamma^\prime(t))dt - \varepsilon < \int_{t_0}^{s_0} \hat F(\gamma^\prime(t))dt,
\]
where the first inequality is due to Proposition \ref{mpqsminimiza} and the second inequality is due to (\ref{aproximaFgammalinha1}), (\ref{aproximaFgammalinha2}), (\ref{aproxima lm1}) and (\ref{aproxima lm2}).
\end{proof}

\begin{proposition}
\label{suave generico nao minimiza}
Let $\gamma:[0,1] \rightarrow \hat M$ be a piecewise smooth path. Suppose that there exist $t_0 \in [0,1]$ such that $\gamma^\prime(t_0) \neq 0$ isn't a preferred vector. 
Then $\ell_{\hat F} (\gamma) > \ell_{\hat F}([\gamma(0),\gamma(1)]_{\text{\em min}})$.
\end{proposition}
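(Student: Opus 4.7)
The plan is to combine the pointwise strict comparison from Lemma \ref{compara suave local} with a global \emph{weak} comparison, using the triangle-type inequality for preferred paths supplied by Proposition \ref{mpqsminimiza}. The strict loss of length happens in a small neighborhood of $t_0$; elsewhere we only need a non-strict bound.

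First I would establish an auxiliary weak inequality: for every piecewise smooth $\alpha:[a,b]\to\hat M$,
$$\ell_{\hat F}(\alpha) \;\geq\; \ell_{\hat F}([\alpha(a),\alpha(b)]_{\text{min}}).$$
The proof mirrors the approximation technique already used inside Lemma \ref{compara suave local}. On each smooth piece of $\alpha$, given $\varepsilon>0$, use Lemma \ref{limite fundamental} together with the uniform continuity of $t\mapsto \hat F(\alpha'(t))$ to select a partition $\{\tau_i\}$ and test points $\hat\tau_i$ with
$$\Bigl|\sum_i \ell_{\hat F}([\alpha(\tau_{i-1}),\alpha(\tau_i)]_{\text{min}}) - \sum_i \hat F(\alpha'(\hat\tau_i))(\tau_i-\tau_{i-1})\Bigr|<\tfrac{\varepsilon}{2}$$
and the Riemann-sum on the right within $\varepsilon/2$ of $\ell_{\hat F}(\alpha)$. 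The concatenation $\beta$ of the paths $[\alpha(\tau_{i-1}),\alpha(\tau_i)]_{\text{min}}$ is a preferred path from $\alpha(a)$ to $\alpha(b)$, so Proposition \ref{mpqsminimiza} yields
$$\ell_{\hat F}([\alpha(a),\alpha(b)]_{\text{min}}) \;\leq\; \ell_{\hat F}(\beta) \;=\; \sum_i \ell_{\hat F}([\alpha(\tau_{i-1}),\alpha(\tau_i)]_{\text{min}}) \;\leq\; \ell_{\hat F}(\alpha)+\varepsilon,$$
and letting $\varepsilon\to 0$ settles the claim. The extension across the finitely many non-smooth joints of $\alpha$ is immediate by summing lengths and invoking Proposition \ref{mpqsminimiza} once more on the concatenation of preferred paths between consecutive break points.

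Next, since $\gamma'(t_0)\neq 0$, the point $t_0$ lies in the interior of some smooth piece of $\gamma$. Applying Lemma \ref{compara suave local} on that piece produces a neighborhood $I$ of $t_0$ with
$$\ell_{\hat F}(\gamma|_{[t_0,t]}) > \ell_{\hat F}([\gamma(t),\gamma(t_0)]_{\text{min}}) \qquad \text{for every } t\in I\setminus\{t_0\}.$$
Choose $t_1<t_0<t_2$ inside $I$ (or the obvious one-sided choice if $t_0\in\{0,1\}$). Adding the two strict local inequalities and using Proposition \ref{mpqsminimiza} on the concatenation $[\gamma(t_1),\gamma(t_0)]_{\text{min}}\cup [\gamma(t_0),\gamma(t_2)]_{\text{min}}\in\mathbf{P}$, we obtain
$$\ell_{\hat F}(\gamma|_{[t_1,t_2]}) \;>\; \ell_{\hat F}([\gamma(t_1),\gamma(t_0)]_{\text{min}})+\ell_{\hat F}([\gamma(t_0),\gamma(t_2)]_{\text{min}}) \;\geq\; \ell_{\hat F}([\gamma(t_1),\gamma(t_2)]_{\text{min}}).$$

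Finally, apply the auxiliary weak inequality to the outer pieces $\gamma|_{[0,t_1]}$ and $\gamma|_{[t_2,1]}$, add all three bounds, and collapse the resulting concatenation of preferred paths $[\gamma(0),\gamma(t_1)]_{\text{min}}\cup[\gamma(t_1),\gamma(t_2)]_{\text{min}}\cup[\gamma(t_2),\gamma(1)]_{\text{min}}$ with a last application of Proposition \ref{mpqsminimiza} to conclude
$$\ell_{\hat F}(\gamma) \;>\; \ell_{\hat F}([\gamma(0),\gamma(1)]_{\text{min}}).$$
The main technical hurdle is the auxiliary weak inequality, since it requires the simultaneous control of two independent Riemann-sum approximations (the length of $\alpha$ via $\hat F(\alpha')$, and the sum $\sum_i\ell_{\hat F}([\alpha(\tau_{i-1}),\alpha(\tau_i)]_{\text{min}})$) together with the careful choice of test points $\hat\tau_i$ of the kind constructed in the proof of Lemma \ref{compara suave local}. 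Once this is in hand, the remaining argument is essentially a bookkeeping assembly.
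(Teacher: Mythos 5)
Your proposal is correct and follows essentially the same route as the paper: a strict length gain near $t_0$ from Lemma \ref{compara suave local}, a weak comparison $\ell_{\hat F}(\alpha)\geq\ell_{\hat F}([\alpha(a),\alpha(b)]_{\text{min}})$ on the remaining pieces obtained by the Riemann-sum approximation of Lemma \ref{limite fundamental}, and a final collapse of the concatenated preferred paths via Proposition \ref{mpqsminimiza}. The only difference is organizational — you isolate the weak inequality as a standalone lemma, while the paper carries out the same estimates inline with explicit $\varepsilon$ bookkeeping.
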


\begin{proof}

According to Lemma \ref{compara suave local}, there exist a neighborhood $I$ of $t_0$ such that
\[
\ell_{\hat F}([\gamma(t_0),\gamma(t)]_{\text{min}}) < \ell_{\hat F}(\gamma\vert_{[t_0,t]})
\] 
for every $t \in I-\{t_0\}$.
Fix $s_0 \in I$ (in order to fix ideas, we consider $s_0 > t_0$) and set
\[
5\varepsilon = \ell_{\hat F}(\gamma\vert_{[t_0,s_0]}) - \ell_{\hat F}([\gamma(t_0),\gamma(s_0)]_{\text{min}}).
\]
Now we consider a partition $\mathcal P$ of $[0,t_0]$ and a partition $\mathcal Q$ of $[s_0,1]$ such that (\ref{aproximaFgammalinha1}), (\ref{aproximaFgammalinha2}), 
(\ref{aproxima lm1}) and (\ref{aproxima lm2}) hold.
Then
\[
\ell_{\hat F}([\gamma(0), \gamma(1)]_{\text{min}})
\]
\[
\leq \sum_{i=1}^{n_{\mathcal P}} \ell_{\hat F}([\gamma(t_i),\gamma(t_{i-1})]_{\text{min}}) 
+ \ell_{\hat F}([\gamma(t_0),\gamma(s_0)]_{\text{min}}) 
+ \sum_{i=1}^{n_{\mathcal Q}} \ell_{\hat F}([\gamma(s_i),\gamma(s_{i-1})]_{\text{min}})
\]
\[
< \int_{0}^{t_0} \hat F(\gamma^\prime(t))dt + 2\varepsilon + \int_{t_0}^{s_0} \hat F(\gamma^\prime(t))dt -5\varepsilon + \int_{s_0}^1 \hat F(\gamma^\prime(t)) dt + 2\varepsilon < \ell_{\hat F}(\gamma),
\]
where the first inequality is due to Proposition \ref{mpqsminimiza} and the second inequality is due to Formulas (\ref{aproximaFgammalinha1}), (\ref{aproximaFgammalinha2}), 
(\ref{aproxima lm1}) and (\ref{aproxima lm2}).
\end{proof}

The main theorem of this work follows as a direct consequence of Propositions \ref{mpqsminimiza} and \ref{suave generico nao minimiza}.

\begin{theorem}[Minimizing paths]
\label{principal}
Let $p,q\in (\hat M,\hat F)$. Then $[p,q]_{\text{\em min}}$ is the unique minimizing path connecting $p$ and $q$.
\end{theorem}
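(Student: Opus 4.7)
The plan is to combine the two preceding propositions, using a simple dichotomy on the tangent vectors of an arbitrary competitor curve. Let $\gamma:[0,1]\to\hat M$ be an arbitrary piecewise smooth path with $\gamma(0)=p$ and $\gamma(1)=q$. I split into two cases according to whether $\gamma$ ever has a nonzero non-preferred tangent vector.

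\textbf{Case 1.} There exists $t_0\in[0,1]$ with $\gamma^\prime(t_0)\neq 0$ and $\gamma^\prime(t_0)$ not a preferred vector. Then Proposition \ref{suave generico nao minimiza} applies directly and yields the strict inequality
\[
\ell_{\hat F}(\gamma)>\ell_{\hat F}([p,q]_{\text{min}}),
\]
so $\gamma$ cannot be minimizing and in particular is not equal to $[p,q]_{\text{min}}$.

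\textbf{Case 2.} For every $t$ at which $\gamma$ is differentiable, $\gamma^\prime(t)$ is either zero or a positive multiple of one of the six preferred vectors $\pm\vec v_0,\pm\vec v_2,\pm\vec v_4$. On each smooth piece, the map $t\mapsto \gamma^\prime(t)/\|\gamma^\prime(t)\|$ is continuous on the open set where $\gamma^\prime\neq 0$ and takes values in a discrete set of six vectors, so it is locally constant and hence constant on each connected component of that open set. The (possibly empty) open subset where $\gamma^\prime\equiv 0$ contributes nothing to the trace. Hence the trace of $\gamma$ is a finite concatenation of preferred line segments, i.e.\ $\gamma\in\mathbf P$, and its simplification (Remark \ref{simplied path}) lies in $\mathbf S$, connects $p$ to $q$, and has the same $\hat F$-length as $\gamma$. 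By Proposition \ref{mpqsminimiza}, $[p,q]_{\text{min}}$ is the unique $\mathbf S$-minimizing path from $p$ to $q$, so
\[
\ell_{\hat F}([p,q]_{\text{min}})\leq\ell_{\hat F}(\gamma),
\]
with equality iff the simplification of $\gamma$ equals $[p,q]_{\text{min}}$, i.e.\ iff $\gamma=[p,q]_{\text{min}}$ in the sense of Remark \ref{caminhos e caminhos parametrizados}.

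Combining the two cases shows that $\ell_{\hat F}([p,q]_{\text{min}})\leq\ell_{\hat F}(\gamma)$ for every piecewise smooth $\gamma$ from $p$ to $q$, with equality iff $\gamma=[p,q]_{\text{min}}$; taking the infimum over $\gamma$ gives that $[p,q]_{\text{min}}$ is minimizing, and the equality clause gives uniqueness. The main obstacle I expect is the verification in Case 2 that a piecewise smooth path whose nonzero velocities are all preferred really is (up to monotonic reparametrization) a path in $\mathbf P$; this is elementary but requires the continuity/discreteness argument above to rule out pathologies where the velocity vanishes on a set with complicated structure. Everything else is a direct invocation of the two propositions already proved.
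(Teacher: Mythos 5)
Your overall strategy is exactly the paper's: the paper proves Theorem \ref{principal} by declaring it a direct consequence of Propositions \ref{mpqsminimiza} and \ref{suave generico nao minimiza}, and your two cases supply precisely the dichotomy that makes that declaration work. In that sense you have reconstructed (and in fact written out more carefully than) the intended argument.

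The one step that is not actually closed by your reasoning is the finiteness claim in Case 2. Local constancy of $\gamma^\prime/\|\gamma^\prime\|$ on the open set $U=\{t:\gamma^\prime(t)\neq 0\}$ only tells you that the direction is constant on each connected component of $U$; it does not bound the number of components, and the direction may differ between components. A smooth curve can have velocity $\sum_n c_n\,\psi\bigl(2^{n+1}(t-2^{-n-1})\bigr)\vec v_{d_n}$ with $d_n$ alternating between two preferred directions and $c_n\to 0$ fast enough that all derivatives vanish at $t=0$; its trace is an infinite zigzag of preferred segments accumulating at a point, hence not in $\mathbf P$, so Proposition \ref{mpqsminimiza} does not apply to it directly. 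This is a pathology your "continuity/discreteness" remark does not rule out. It is repairable: restrict $\gamma$ to $[\varepsilon,1]$ (or away from the countably many accumulation points), apply Proposition \ref{mpqsminimiza} to the resulting finite concatenation, and pass to the limit using the continuity of $(a,b)\mapsto\ell_{\hat F}([a,b]_{\text{min}})$; strictness in the equality case follows because some finite truncation already differs from the corresponding $[\cdot,\cdot]_{\text{min}}$ by a definite amount. To be fair, the paper itself glosses over exactly the same point, so this is a refinement of the published argument rather than a divergence from it.
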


In other words, the minimizing paths are $[p,q] \in \mathbf S$ or else $[p,a,q] \in \mathbf S$ with directions $\left< 0,2 \right>$, $\left< 2,4\right>$ or $\left< 4, 6\right>$ (or their reverse paths). 

\begin{theorem}[Geodesics]
\label{principal 2}
The geodesics in $(\hat M, \hat F)$ are subsets of paths $[a, b,$ $c, d, e] \in \mathbf S$ with directions $\left< 0, 2, 4, 6\right>$.
\end{theorem}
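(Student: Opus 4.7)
The plan is to prove both directions of the characterization, with Theorem \ref{principal} supplying essentially all of the geometry: between any two points the unique minimizing path is $[p,q]_{\min}$, which is either a single preferred line segment or a two-segment path $[p,a,q]$ with directions in $\langle 0,2\rangle$, $\langle 2,4\rangle$ or $\langle 4,6\rangle$ (or the reverses of these traversed backwards).

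For the ``if'' direction, let $\eta$ be any subpath of some $[a,b,c,d,e]\in\mathbf S$ with directions $\langle 0,2,4,6\rangle$. Then $\eta$ itself is a simplified preferred path whose direction sequence is a contiguous subsequence of $\langle 0,2,4,6\rangle$, and every bend of $\eta$ is of type $\langle 0,2\rangle$, $\langle 2,4\rangle$ or $\langle 4,6\rangle$ (or one of their reverses, if we traverse $\eta$ backwards). To check local minimality at a point $\eta(t_0)$, choose $J$ so small that $\eta|_J$ has at most one vertex. Either $\eta|_J$ is a single preferred line segment, or it is $[\eta(t_1),\eta(t_0),\eta(t_2)]$ with one of the above bend directions. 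In both cases, matching the configuration with the sector-by-sector construction of $[p,q]_{\min}$ shows $\eta|_{[t_1,t_2]}=[\eta(t_1),\eta(t_2)]_{\min}$, so by Theorem \ref{principal} the restriction is minimizing and $\eta$ is a geodesic.

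For the ``only if'' direction, let $\gamma$ be a geodesic. By the definition of geodesic combined with Theorem \ref{principal}, around each parameter $t_0$ there is a neighborhood on which $\gamma|_{[t_1,t_2]}=[\gamma(t_1),\gamma(t_2)]_{\min}$ for all $t_1\le t_2$. In particular each small piece of $\gamma$ has at most one bend, so the bends of $\gamma$ are isolated and $\gamma$ is a simplified preferred path. At every interior vertex the direction pair $\langle \alpha_-,\alpha_+\rangle$ is realised by some $[p,q]_{\min}$, so it belongs to
\[
\mathcal B=\{\langle 0,2\rangle,\langle 2,4\rangle,\langle 4,6\rangle,\langle 8,6\rangle,\langle 10,8\rangle,\langle 0,10\rangle\}.
\]

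The main combinatorial step is to regard $\mathcal B$ as the edge set of a directed graph $G$ on the vertex set $\{0,2,4,6,8,10\}$. Direct inspection shows that vertex $0$ has in-degree $0$ and out-degree $2$, vertex $6$ has in-degree $2$ and out-degree $0$, and every other vertex has in- and out-degree equal to $1$. Hence $G$ is a DAG whose only maximal directed paths are $0\to 2\to 4\to 6$ and $0\to 10\to 8\to 6$, and every directed walk in $G$ sits inside one of them. Since the direction sequence of $\gamma$, read in the direction of traversal, is such a walk, it is a subsequence of one of these two maximal sequences; reversing the traversal turns $\langle 0,10,8,6\rangle$ into $\langle 0,2,4,6\rangle$, so in either case $\gamma$ lies, as a point set, inside some $[a,b,c,d,e]\in\mathbf S$ with directions $\langle 0,2,4,6\rangle$. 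The only delicate point is the completeness of the list $\mathcal B$, which reduces to checking all six sectors $S_k(p)$ against the construction of $[p,q]_{\min}$; once this is verified, the remainder of the argument is bookkeeping on $G$.
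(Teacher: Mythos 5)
Your proof is correct and follows essentially the same route as the paper: a geodesic is locally the unique minimizing path $[p,q]_{\text{min}}$ (Theorem \ref{principal}), hence a simplified preferred path all of whose bends lie in your list $\mathcal B$, which forces the direction sequence to sit inside $\left<0,2,4,6\right>$ up to reversal. The paper compresses that last combinatorial step into one sentence where you spell it out as a DAG argument, and it leaves the easy converse (that every such subpath is indeed a geodesic) implicit, which you verify explicitly.
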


\begin{proof}

Let $\gamma$ be a geodesic in $(\hat M, \hat F)$.
Then $\gamma$ can be covered by a finite number of minimizing open subsets, what implies that $\gamma$ can be written as a concatenation of minimizing paths. 
As a consequence, $\gamma$ can be written as a path in $\mathbf S$.
Denote $\gamma = [a_1, a_2, \ldots, a_n] \in \mathbf S$.
If $n\geq 3$, then every $[a_{i-1}, a_i, a_{i+1}] \subset \gamma$ (or its reverse) has directions $\left< 0, 2\right>$, $\left< 2, 4\right>$ or else $\left< 4, 6\right>$.
Therefore $\gamma$ can be written as a subset of $[a,b,c,d,e] \in \mathbf S$ with directions $\left< 0, 2, 4, 6 \right>$.
\end{proof}

\section{Remarks about the geodesic structure of $(\hat M, \hat F)$}
\label{geodesic structure}

In this section we make some remarks about the geodesic structure of $(\hat M, \hat F)$.
We also propose some problems at the end of the section.

Convexity is a complicated issue for general $C^0$-Finsler manifolds because of the lack of the existence and uniqueness of minimizing paths that connect two points, even for small balls.
For instance, we could say that a subset $U$ is convex if for any $p$ and $q \in U$, there exist a minimizing path that connects them and remains in $U$.
Another definition could be: for any pair of points $p,q \in U$, every minimizing path connecting $p$ and $q$ remains in $U$.
Observe that according to the first definition, every Euclidean ball in $(\hat M, F_0)$ is convex because straight lines are minimizing paths.
But they are not convex with respect to the second definition.

The existence and uniqueness of minimizing paths that connects two points of $(\hat M, \hat F)$ allow us to define $\hat F$-convexity and $\hat F$-strong convexity. 
The definition of $\hat F$-strong convexity is borrowed from Riemannian geometry (see \cite{doCarmo2}).

\begin{definition}
\label{define F-convex}
We say that $U \subset \hat M$ is {\em $\hat F$-convex} if for every $p,q \in U$, we have that $[p,q]_{\text{\em min}} \subset U$.
The subset $U$ is {\em $\hat F$-strongly convex} if $[p,q]_{\text{\em min}}\backslash \{p,q\} \subset U$ whenever $p,q \in \bar U$.
\end{definition}

\begin{remark}
\label{F-convexity} 
A lengthy direct verification shows that all the results in this work hold if $\hat M$ is replaced by a $\hat F$-convex open subset of $\hat M$.
\end{remark}

\begin{proposition}
\label{F-convex subsets} Euclidean half-planes $($open or closed$)$ bounded below by Euclidean lines are $\hat F$-convex.
Euclidean half-planes $($open or closed$)$ bounded by preferred lines are $\hat F$-convex.
Intersection of $\hat F$-convex subsets are $\hat F$-convex.
In particular, convex polygons bounded by preferred segments are $\hat F$-convex.
\end{proposition}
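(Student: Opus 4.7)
The plan is to reduce the four assertions to a single computation about the vertex $a$ of $[p,q]_{\text{min}}$ when $[p,q]\notin\mathbf S$. The first step is to tabulate, for each sector $S_k(p)$ that can contain $q$, positive scalars $s,t > 0$ and preferred direction vectors $\vec v_\alpha,\vec v_\beta$ such that
\[
a \;=\; p + s\,\vec v_\alpha \;=\; q + t\,\vec v_\beta.
\]
A direct calculation from $a = l_i(p)\cap l_j(q)$ yields the six pairs $(\vec v_\alpha,\vec v_\beta) = (\vec v_0,\vec v_8), (\vec v_2,\vec v_{10}), (\vec v_4,\vec v_0), (\vec v_8,\vec v_0), (\vec v_{10},\vec v_2), (\vec v_0,\vec v_4)$ as $k$ runs through $1,3,5,7,9,11$. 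With this tabulation in hand, the third assertion is immediate from Definition~\ref{define F-convex}, and the fourth assertion follows from the second and third because any convex polygon bounded by preferred segments is a finite intersection of half-planes bounded by the preferred lines through its edges.

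For the two half-plane assertions the approach is uniform. Writing $H=\{\Phi\geq c\}$ (or $\{\Phi>c\}$) for the defining affine functional $\Phi$ with linear part $L$, Euclidean convexity of $H$ reduces matters to showing $a\in H$ in the case $[p,q]_{\text{min}}=[p,a,q]$. The parameterization above gives
\[
\Phi(a) \;=\; \Phi(p) + s\,L(\vec v_\alpha) \;=\; \Phi(q) + t\,L(\vec v_\beta),
\]
so $a\in H$ follows as soon as, in the sector at hand, one of $L(\vec v_\alpha), L(\vec v_\beta)$ has the required sign. For the first assertion $L(v)=v^2 - mv^1$; since $L(\vec v_0)=1$, the four sectors whose pair contains $\vec v_0$ are automatic, while in the remaining sectors $S_3, S_9$ the pair is $\{\vec v_2,\vec v_{10}\}$, and the inequalities $L(\vec v_2)\geq 0 \Leftrightarrow m\leq 1/\sqrt{3}$ and $L(\vec v_{10})\geq 0 \Leftrightarrow m\geq -1/\sqrt{3}$ are complementary and together cover every $m\in\mathbb{R}$. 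For the second assertion I would split according to the three possible directions of the preferred boundary and its two sides; the vertical case follows directly from the tabulation, which gives $a^1\in[\min(p^1,q^1),\max(p^1,q^1)]$, and in the remaining cases the relevant normal functional $L$ vanishes on the preferred direction parallel to the boundary, again reducing each sign check to an inspection of the tabulated pairs.

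The main obstacle is purely organizational: the sector-by-boundary table has several entries and for each entry one must verify that at least one direction in the pair yields the correct sign of $L$. Once the parameterization is laid out, every individual verification collapses to a one-line sign comparison of $L$ on a preferred direction vector, so no deeper geometric input is required.
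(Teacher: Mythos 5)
Your proof is correct, but it takes a genuinely different route from the paper's. The paper argues by contradiction: if $p,q\in H$ while the apex $a$ of $[p,q]_{\text{min}}=[p,a,q]$ escapes $H$, then $H$ must be the lower side of its boundary line $l$, and a line $\tilde l$ parallel to $l$ lying strictly between $l$ and $a$ cuts the two legs of $[p,a,q]$ at points $c,d$; the triangle $[a,c,d,a]$ has angle $2\pi/3$ at $a$ and two preferred sides, so if $l$ were preferred the triangle would be a preferred --- hence equilateral --- triangle, a contradiction. That one argument disposes of both half-plane statements simultaneously and is slope-independent. You instead parameterize $a=p+s\vec v_\alpha=q+t\vec v_\beta$ sector by sector and test the sign of the boundary functional on the two preferred directions. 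Your table of pairs is correct (all six sectors check against the definition of $[p,q]_{\text{min}}$), the covering of every slope $m$ by $L(\vec v_2)\geq 0$ or $L(\vec v_{10})\geq 0$ handles the only two nontrivial sectors $S_3(p)$ and $S_9(p)$ for upper half-planes, and the remaining sign checks for lower half-planes bounded by lines parallel to $l_2$ or $l_4$ (where $L$ vanishes on the parallel preferred direction) do all go through, as does the vertical case via $a^1\in[\min(p^1,q^1),\max(p^1,q^1)]$. What your approach buys is that it actually proves, rather than asserts, the step the paper compresses into the phrase that $H$ must be ``placed below $l$'': namely that the apex never escapes an upper half-plane, for any slope. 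The cost is organizational --- six sectors times several boundary types --- where the paper's equilateral-triangle observation settles everything at once. The treatments of the intersection and polygon assertions coincide and are immediate from Definition~\ref{define F-convex} in both cases.
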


\begin{proof}

Suppose that the Euclidean half-plane $H$, bounded by a line $l$, isn't $\hat F$ convex.
Let $p,q \in H$ such that $[p,q]_{\text{min}} \not \subset H$.
Therefore $[p,q]_{\text{min}} = [p,a,q] \in \mathbf S$, $a \not \in H$ and $H$ is placed below $l$.
Notice that there exist an Euclidean line $\tilde l$, parallel to $l$, that intercepts $[p,b,q]$ at two distinct points $c,d \in [p,q]_{\text{min}}\backslash \{p,b,q\}$. 
$[b,c,d,b]$ is a triangle with angle $2\pi/3$ at the vertex $b$.
But preferred triangles are equilateral. 
Therefore $l$ isn't a preferred line, what proves the first two statements.

The proof of the last two statements is trivial.
\end{proof}

\begin{remark}
\label{local} A local version of this theory is straightforward (see Remark \ref{F-convexity}). 
In fact, for every $p \in \hat M$, there exist an arbitrary small neighborhood of $p$ bounded by a preferred parallelogram which is $\hat F$-convex due to Proposition \ref{F-convex subsets}.
\end{remark}

\begin{remark}
\label{nao e Busemann} 
Consider $[a,b,c_1]$ with directions $\left< 0 , 10 \right>$ and $[a,b,c_2]$ with directions $\left< 0, 2 \right>$ such that $\ell_{\hat F}([b,c_1]) = \ell_{\hat F}([b,c_2])$. 
Observe that they are minimizing paths that don't satisfy Condition (4) of Definition \ref{Busemann definition}.
Therefore the manifolds $(\hat M, \hat F)$ aren't a Busemann spaces.
\end{remark}

We end this work presenting another difference between $(\hat M, \hat F)$ and Riemannian manifolds. 

\begin{theorem}
\label{nao e fortemente convexo}
$(\hat M, \hat F)$ doesn't admit a bounded open $\hat F$-strongly subset.
\end{theorem}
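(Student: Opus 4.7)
The plan is the following. Assume for contradiction that $U \subset \hat M$ is a bounded open $\hat F$-strongly convex subset, and pick $p_0 \in \bar U$ maximising the coordinate $x^2$ (such a point exists because $\bar U$ is compact). I would apply strong convexity three times in cascade, each application producing from two points of $\bar U$ a new ``higher'' point whose position eventually contradicts the previous step.

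The first step is to show that $\bar U \setminus \{p_0\}$ is contained in the closed $120^\circ$ downward cone $C := \bar S_5(p_0) \cup \bar S_7(p_0)$ (bounded above by $h_4(p_0) \cup h_8(p_0)$), which also implies that $p_0$ is the unique maximiser. Since $q^2 \leq p_0^2$ for every $q \in \bar U$, the only way to have $q \notin C$ and $q \neq p_0$ is for $q$ to lie in $S_3(p_0) \cup S_9(p_0)$. But if $q \in S_3(p_0)$, Theorem \ref{principal} gives $[p_0,q]_{\text{min}} = [p_0,a,q] \in \mathbf S$ with directions $\langle 2,4\rangle$ and $a = l_2(p_0) \cap l_4(q)$; a direct computation yields $a^2 = (p_0^2+q^2)/2 + (q^1-p_0^1)/(2\sqrt{3}) > p_0^2$, so strong convexity forces $a \in U$, contradicting the maximality of $p_0^2$ on $\bar U$. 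The case $q \in S_9(p_0)$ is symmetric, using directions $\langle 10,8\rangle$.

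For the final contradiction, since $p_0 \in \partial U$ and $U$ is open and two-dimensional, $U$ cannot lie in the one-dimensional set $h_4(p_0) \cup h_6(p_0) \cup h_8(p_0)$, so there is $q \in U \cap S_5(p_0)$ (or $q \in U \cap S_7(p_0)$, treated by the symmetric argument with $h_8(p_0)$ in place of $h_4(p_0)$). Theorem \ref{principal} gives $[p_0,q]_{\text{min}}$ of type $\langle 4,6\rangle$ with middle vertex $a = (q^1,\, p_0^2 - (q^1-p_0^1)/\sqrt{3}) \in h_4(p_0) \setminus \{p_0\}$, and strong convexity gives $a \in U$. Pick $\varepsilon > 0$ small enough that $q_1 := (a^1-\varepsilon, a^2)$ and $q_2 := (a^1+\varepsilon, a^2)$ both lie in $U$; then $q_2 \in S_3(q_1)$, and Theorem \ref{principal} produces the middle vertex $b = (a^1,\, a^2 + \varepsilon/\sqrt{3})$ for $[q_1,q_2]_{\text{min}}$, which must lie in $U$ by strong convexity. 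But $b^2 - p_0^2 = -(a^1-p_0^1-\varepsilon)/\sqrt{3} > -(b^1-p_0^1)/\sqrt{3}$, so $b$ lies strictly above $h_4(p_0)$ on the right of $p_0$; that is, $b \in S_3(p_0)$, contradicting $\bar U \setminus \{p_0\} \subset C$ from the previous step.

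The main obstacle will be the hexagonal-sector bookkeeping, since the argument cascades three times through different minimising-path types ($\langle 2,4\rangle$, then $\langle 4,6\rangle$, then $\langle 2,4\rangle$ again), and each newly produced middle vertex has to be carefully compared against the cone inclusion of the previous step. The subtler ingredient is the dimension argument used to find $q \in U \cap S_5(p_0)$: it rests on $p_0 \in \partial U$ (which itself follows from the maximality of $p_0^2$ together with openness of $U$) and on the fact that a two-dimensional open set cannot be contained in a finite union of preferred half-lines.
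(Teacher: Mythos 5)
Your proof is correct, but it takes a genuinely different route from the paper's. The paper chooses four supporting preferred lines of $\bar U$ (a left line parallel to $\vec v_0$, two upper lines parallel to $\vec v_2$ and $\vec v_4$, and a right line parallel to $\vec v_6$) and uses strong convexity to force the four tangency points to coincide, so that the left and right vertical supporting lines are equal and $U$ collapses into a single line. You instead take the topmost point $p_0$ of $\bar U$ and iterate strong convexity, exploiting the downward concavity of $[p,q]_{\text{min}}$: each application manufactures a middle vertex lying strictly higher than it ``should,'' first to trap $\bar U\setminus\{p_0\}$ in the cone $\bar S_5(p_0)\cup\bar S_7(p_0)$, then to escape that cone. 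Both arguments rest on the same geometric fact (minimizing paths bulge upward past any preferred supporting line), and both implicitly assume $U\neq\emptyset$; the paper's version is shorter because it never needs coordinates, while yours is more explicit and self-contained. One simplification available to you: once you have produced $a\in U\cap\bigl(h_4(p_0)\setminus\{p_0\}\bigr)$, the final cascade through $q_1,q_2,b$ is unnecessary --- since $U$ is open and $a$ lies on the boundary ray $h_4(p_0)$ of the cone with $a^1>p_0^1$, any small ball about $a$ inside $U$ already contains points strictly above $l_4(p_0)$ with first coordinate exceeding $p_0^1$, hence points of $U$ outside $C$ and distinct from $p_0$, which contradicts the cone inclusion directly.
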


\begin{proof}

Suppose that $U$ is a bounded open $\hat F$-strongly subset of $(\hat M, \hat F)$.
We claim for a contradiction. 
First of all, notice that $U$ is a $E$-bounded subset due to Remark \ref{equivalent metrics}.
Let $l_0(a)$ be the first line parallel to $\vec v_0$ that touches the left side of $\bar U$ at $a$ and let $l_2(b)$ be the first line parallel to $\vec v_2$ that touches the upside of $\bar U$ at $b$. 
Then $a=b$, otherwise $[a,b]_{\text{min}} \backslash \{a,b\}$ would be contained in the open subset $U$, what contradicts the fact that there isn't any point of $U$ in the left side of $l_1(a)$ or the fact that there isn't any point of $U$ above $l_2(b)$. 
Now let $l_4(c)$ be the first line parallel to $\vec v_4$ that touches the upside of $\bar U$ at $c$.
For the same reason presented above, we must have $b=c$.
Finally let $l_6(d)$ be the first line parallel to $\vec v_6$ that touches the right side of $\bar U$ at $d$. 
For the same reason presented above, we must have $c=d$. 
Then $U$ is bounded by $l_0(a)$ on its left side and by $l_0(d)=l_0(a)$ on its right side, what gives the contradiction.
\end{proof}

We end this work proposing two problems.

If $p\in \hat M$ and $\vec v \in T_p \hat M$ isn't a preferred vector, then there isn't any geodesic $\gamma^\prime:(-\varepsilon , \varepsilon) \rightarrow (\hat M, \hat F)$ that satisfies $\gamma(0)=p$ and $\gamma^\prime(0)=v$.
If $\vec v$ is a preferred vector, then there are infinitely many minimizing paths that satisfy these conditions because any path $[p,a,q]$ with directions $\left< 0,2 \right>$, $\left<2 , 4\right>$ or $\left< 4, 6\right>$ are minimizing.
Therefore it is meaningless to consider the exponential map.
If $\vec v$ is a positive multiple of $\vec v_6$, then there exist a unique minimizing path $\gamma: [0,\varepsilon ) \rightarrow \hat M$ that satisfy $\gamma (0)=p$ and $\gamma^\prime(0)=v$.
This property doesn't hold for any other preferred direction.
In some sense, $\vec v_6$ is a direction which is more stable than the others.
It will be interesting to study the forward stability of geodesics.

In this work we proved the existence of $C^0$-Finsler structures that admit a large family of metric perturbations that doesn't change its geodesic structure.
However we worked with a ``privileged'' coordinate system, where places with longer paths and shorter paths are clearly identified.
It is worthwhile to study intrinsic geometric invariants that allow us to identify such a kind of geodesic stability on $C^0$-Finsler structures, even locally.


\begin{thebibliography}{99}

\bibitem{BaoChernShen}
\textsc{D.~Bao, S.-S. Chern, and Z.~Shen}, An introduction to {R}iemann-{F}insler
  geometry, Graduate Texts in Mathematics, vol. 200, Springer-Verlag, New York, 2000.

\bibitem{Beltrami}
\textsc{E. Beltrami}, Rizoluzione del problema : ``Riportare i punti di una superficie sopra un piano in modo che le linee geodetiche vengano
rappresentate da linee rette'', Ann. Mat. Pura Appl. 7 (1855),
no.~1, 185--204.

\bibitem{Berestovskii1}
\textsc{V.~N. Berestovski\u\i}, Homogeneous manifolds with an intrinsic metric. {I}, Sibirsk. Mat. Zh. 29 (1988), no.~6, 17--29.

\bibitem{Berestovskii2}
\bysame, Homogeneous manifolds with an intrinsic metric. {II}, Sibirsk.
Mat. Zh. 30 (1989), no.~2, 14--28, 225.

\bibitem{Bryant-projectively}
\textsc{R. L. Bryant}, Projectively flat {F}insler {$2$}-spheres of constant curvature, Selecta Math. (N.S.) 3 (1997), no.~2, 161--203.

\bibitem{Burago}
\textsc{D. Burago, Y. Burago and S. Ivanov}, A course in metric
  geometry, Graduate Studies in Mathematics, vol.~33, American Mathematical Society, Providence, RI, 2001.

\bibitem{Busemann-Geodesic}
\textsc{H. Busemann}, The geometry of geodesics, Academic Press Inc., New York, N. Y., 1955.

\bibitem{Busemann-Hilbert-4}
\bysame, Problem {IV}: {D}esarguesian spaces,  (1976), 131--141. Proc.  Sympos. Pure Math., Vol. XXVIII.

\bibitem{doCarmo2}
\textsc{M. P. do Carmo}, Riemannian geometry, Mathematics: Theory
\& Applications, Birkh\"auser Boston, Inc., Boston, MA, 1992, Translated from the second Portuguese edition by Francis Flaherty.

\bibitem{Duran-volume}
\textsc{C. E. Dur\'{a}n}, A volume comparison theorem for {F}insler
  manifolds, Proc. Amer. Math. Soc. 126 (1998), no.~10, 3079--3082.

\bibitem{BenettiFukuoka}
\textsc{R. Fukuoka and D. Benetti}, Induced {H}ausdorff metrics on
  quotient spaces, Bull. Braz. Math. Soc. (N.S.) 48 (2017), no.~4,
  551--598.

\bibitem{Gribanova}
\textsc{I.~A. Gribanova}, The quasihyperbolic plane, Sibirsk. Mat. Zh. 40 (1999), no.~2, 288--301.

\bibitem{Kiosak-Matveev}
\textsc{V. Kiosak and V.~S. Matveev}, Proof of the projective  {L}ichnerowicz conjecture for pseudo-{R}iemannian metrics with degree of mobility greater than two, Comm. Math. Phys. 297 (2010), no.~2,
  401--426.

\bibitem{Levi-Civita}
\textsc{T.~Levi-Civita}, Sulle trasformazioni delle equazioni dinamiche, Ann. Mat. Pura Appl. 24 (1896), no.~1, 255--300.

\bibitem{Li-advances}
\textsc{B. Li}, On the classification of projectively flat {F}insler metrics with constant flag curvature, Adv. Math. 257 (2014), 266--284.

\bibitem{Matveev-torus}
\textsc{V.~S. Matveev}, Projectively equivalent metrics on the torus,
Differential Geom. Appl. 20 (2004), no.~3, 251--265.

\bibitem{Matveev-same-geodesics}
\bysame, Closed manifolds admitting metrics with the same geodesics,  S{PT} 2004---{S}ymmetry and perturbation theory, World Sci. Publ., Hackensack, NJ, 2005, pp.~198--208.

\bibitem{Montgomery}
\textsc{R. Montgomery}, A tour of subriemannian geometries, their geodesics and applications, Mathematical Surveys and Monographs, vol.~91, American Mathematical Society, Providence, RI, 2002.

\bibitem{Pogorelov-Hilbert-4}
\textsc{A.~V. Pogorelov}, Hilbert's fourth problem, V. H. Winston \& Sons, Washington, D.C.; A Halsted Press Book, John Wiley \& Sons, New York-Toronto, Ont.-London, 1979, Translated by Richard A. Silverman, Scripta Series in Mathematics.

\bibitem{Shen-transactions}
\textsc{Z. Shen}, Projectively flat {F}insler metrics of constant flag
curvature, Trans. Amer. Math. Soc. 355 (2003), no.~4, 1713--1728.

\end{thebibliography}
\end{document}